\newtheorem{theorem}{Theorem}[section]
\theoremstyle{definition}
\newtheorem{remark}{Remark}[section]
\newtheorem{example}[remark]{Example}
\newcommand{\norm}[1]{\left\Vert#1\right\Vert}
\newcommand{\eps}{\varepsilon}
\def\half{\frac 1 2}
\providecommand{\jump}[1]{ [\mspace{-2.5mu}[ #1 ]\mspace{-2.5mu}] }
\begin{document}
	
	\title{The jump filter in the discontinuous Galerkin method for hyperbolic conservation laws}
	\author{Lei Wei\thanks{School of Mathematical Sciences, University of Science and Technology of China, Hefei, Anhui 230026, P.R. China. E-mail: weilei1@mail.ustc.edu.cn.}
		,~~
		Lingling Zhou\thanks{School of Mathematics and Information Science,
			Henan Polytechnic University, Jiaozuo, Henan 454000, P.R. China. E-mail: lingzhou@hpu.edu.cn. The research was partially supported by NSFC grant No. 12001171.}
		,~~and~
		Yinhua Xia\thanks{Corresponding author. School of Mathematical Sciences, University of Science and Technology of China, Hefei, Anhui 230026, P.R. China. E-mail: yhxia@ustc.edu.cn. The research was partially supported by National Key R\&D Program of China No. 2022YFA1005202/ 2022YFA1005200 and NSFC grant No. 12271498. }
	}
	\date{}
	\maketitle

	\begin{abstract}
		When simulating hyperbolic conservation laws with discontinuous solutions, high-order linear numerical schemes often produce undesirable spurious oscillations.   In this paper, we propose a jump filter within the discontinuous Galerkin (DG) method to mitigate these oscillations. This filter operates locally based on jump information at cell interfaces, targeting high-order polynomial modes within each cell.
		Besides its localized nature, our proposed filter preserves key attributes of the DG method, including conservation, $L^2$ stability, and high-order accuracy. We also explore its compatibility with other damping techniques, and demonstrate its seamless integration into a hybrid limiter. In scenarios featuring strong shock waves, this hybrid approach, incorporating this jump filter as the low-order limiter, effectively suppresses numerical oscillations while exhibiting low numerical dissipation.	
		Additionally, the proposed jump filter maintains the compactness of the DG scheme, which greatly aids in efficient parallel computing. Moreover, it boasts an impressively low computational cost, given that no characteristic decomposition is required and all computations are confined to physical space.
		Numerical experiments validate the effectiveness and performance of our proposed scheme, confirming its accuracy and shock-capturing capabilities.
		\smallskip

		\bigskip
		
		\textbf{Key words}:  hyperbolic conservation laws; discontinuous Galerkin method; local viscosity; jump filter; shock capturing.
		
	\end{abstract}

	\section{Introduction}
	Developing robust, accurate, and efficient high-order schemes for numerically solving hyperbolic nonlinear systems of conservation laws is crucial. Hyperbolic conservation laws are fundamental partial differential equations (PDEs) that bridge mathematics and mechanics, reflecting natural wave phenomena and finding wide applications in fluid dynamics, magnetohydrodynamics, multiphase flow, population dynamics, traffic flow, etc. Despite smooth initial and boundary conditions, nonlinear hyperbolic systems may exhibit flow-field discontinuities, such as shocks and contact discontinuities. The lack of robustness, especially concerning nonlinear instabilities, presents a significant challenge in applying high-order numerical schemes to technical problems. High-order approximations of these discontinuities may result in unbounded oscillations, leading to solver divergence. Hence, the development of shock-capturing techniques has received considerable attention in recent years to tackle this issue.
	
	There has been an abundance of work on the extension of classical shock-capturing methodologies to higher-order methods over the past three decades. Some popular higher-order shock-capturing methods include essentially non-oscillatory (ENO) schemes \cite{harten1987uniformly}, weighted ENO (WENO) methods \cite{liu1994weighted,jiang1996efficient}, subcell finite volume shock-capturing methods \cite{MR3608339,MR4583858}, and Runge-Kutta discontinuous Galerkin (RKDG) methods, just to name a few. Unlike WENO-type methods, DG methods do not require any reconstruction procedure since the numerical solution in each cell is already a polynomial of suitable degree. Since complete discontinuous basis functions are used, DG methods also have some advantages different from classical finite element methods, such as the allowance of arbitrary unstructured meshes with hanging nodes, and easy $h$-$p$ adaptivity. It's worth noting that DG methods exhibit extremely local data communications which demonstrate excellent parallel efficiency. The DG method is also very friendly to the GPU environment \cite{klockner2009nodal}. For the history and development of the DG method, we refer to the survey paper \cite{shu2014discontinuous}. Even though the DG method satisfies a cell-entropy inequality which implies an $L^2$ or energy stability \cite{hou2007solutions}, this stability is not strong enough to prevent spurious oscillations or even blow-ups of the numerical solution in the presence of strong discontinuities. Therefore, nonlinear limiters are often needed to control such spurious oscillations. The classic limiters mainly include minmod type total variation bounded (TVB) slope limiters \cite{cockburn1989tvb1,cockburn1989tvb2,cockburn1998runge}, moment limiters \cite{biswas1994parallel,krivodonova2007limiters}, and many improved version \cite{burbeau2001problem}. In recent years, another class of limiters that have been actively researched are the WENO-type limiters \cite{qiu2005rungelimiter}, including Hermite WENO limiters \cite{zhu2008runge,LuoHWENO2007}, simple WENO limiters \cite{zhong2013simple} and multi-resolution WENO limiters \cite{zhu2020high}. Additionally, a widely used limiter introduced by Kuzmin for DG methods is the vertex-based limiter \cite{kuzmin2010vertex,kuzmin2013slope,kuzmin2014hierarchical}.
	These limiters excel in controlling numerical oscillations; however, they can be computationally expensive and challenging to implement on unstructured or curved meshes, especially in three-dimensional scenarios.

	A different approach to shock capturing, dating back to the work of Von Neumann and Richtmeyer \cite{MR0037613} in the 1950s, is the use of artificial viscosity (AV). Jameson et al. \cite{jameson1981numerical,jameson1993artificial,jameson2001perspective} proposed a combination of a finite volume discretization method with dissipative terms and a Runge-Kutta time stepping method to yield an effective method for solving the Euler equations in arbitrary geometric domains. The historical
	drawback of the artificial viscosity approach is that the added terms are frequently too dissipative in certain
	regions of the flow. To minimize undesirable dissipation, Tadmor applied spectral viscosity (SV) \cite{tadmor1989convergence,MR1305618,maday1993legendre} with spectral methods to stabilize the calculation, introducing diffusion-like terms that depend on the spatial resolution and distinguish different frequencies of the numerical solution. This concept has successfully been applied to large-eddy simulations of high Reynolds number flows in \cite{karamanos2000spectral}. For higher order finite difference methods, Cook and Cabot \cite{cook2004high,cook2005hyperviscosity}
	incorporated artificial viscosity by scaling the physical viscosity terms
	such as dynamic viscosity, bulk viscosity, and shear viscosity. By adjusting the order of the polyharmonic operator, the viscosities can be strongly weighted toward high wavenumbers, thus imparting spectral-like behavior the dissipation. Subsequently, Fiorina and Lele \cite{fiorina2007artificial} further developed this approach to address entropy gradients arising from temperature and species discontinuities. They introduced a nonlinear artificial diffusivity to mitigate these effects. Moreover, Kawai and Lele \cite{kawai2008localized} extended the method to accommodate non-uniform and curvilinear meshes, enhancing its applicability. This approach was later adopted by other authors in the context of compressible turbulence simulations, and by Premasuthan et al. \cite{premasuthan2014computation,premasuthan2014computation2} for spectral difference method \cite{liu2006spectral} using only the dilation part of the shock sensor proposed by Bhagatwala and Lele \cite{bhagatwala2009modified}. The development of this type of method has seen numerous advancements. References \cite{fernandez2018physics,lodato2019characteristic,messai2024artificial,moro2016dilation,MR3209963} provide a wealth of new developments in this field.  As an application of the artificial viscosity and hyperviscosity methods for stabilizing radial basis function based algorithms refer to \cite{MR4135390,MR4517287}.
	
	Artificial viscosity has also been employed with DG methods to capture shocks. Persson and Peraire \cite{persson2006sub} introduced a sub-cell shock-capturing artificial viscosity approach for high-order DG methods which combines a highly selective spectral sensor, based on orthogonal polynomials, with a consistently discretized element-wise constant artificial viscosity added to the equations. Building on work by Persson and Peraire, Klockner et al. \cite{klockner2009nodal} thoroughly justified the detector’s design and further explained the scaling and smoothing steps necessary to turn the output of the detector into a local, artificial viscosity. Later, Persson \cite{persson2013shock} studied the application of sensor-based artificial viscosity to transient flow problems, and showed how different levels of smoothness in the sensor affect the solution.
	Barter and Darmofal \cite{barter2007shock} proposed a PDE-based artificial viscosity model appended to
	the system of governing equations to obtain a smoother artificial viscosity coefficient at the expense
	of solving an additional PDE. Besides, Lv et al. \cite{MR3534873} proposed an artificial viscosity scheme that is based on the eigen-spectrum of the discretization operator to minimize the stiffness addition caused by explicit calculations. The advantage of the formulation presented in \cite{MR3534873} is that it provides a rigorous expression for different orders of bases, rather than relying solely on a simple scaling argument as in \cite{persson2006sub}.
	Another approach is the entropy-based viscosity method \cite{burman2007nonlinear,guermond2008entropy} which was proposed in the DG framework in \cite{zingan2013implementation} and developed for other methods in \cite{hiltebrand2014entropy,chaudhuri2017explicit,tonicello2020entropy}. The underlying heuristics is that in regions where the entropy production is large, the entropy viscosity is large as well and the effective viscosity is limited to be first-order, thus making the method monotone in these regions. There are also approaches that base the shock sensor on the element residuals \cite{hartmann2002adaptive,hartmann2006adaptive,hartmann2013higher,bassi2009high}. However, these residual-based methods tend to present some robustness issues when used with high-order methods and strong shocks. Other models can refer \cite{meister2012application,moro2016dilation}. In addition, the computational efficacy of applying the aforementioned models within the framework of DG was evaluated in \cite{yu2020study}.
	
	In this paper, we present a novel local viscosity in the DG method, utilizing the jump information at cell interfaces, motivated by the oscillation-free discontinuous Galerkin (OFDG) method \cite{lu2021oscillation,liu2022essentially}, which incorporates jump information into the cellwise $L^2$-projection damping term. The concept of jump is of paramount importance as it allows for an adaptive determination of viscosity levels in each cell based on its smoothness characteristics. In regions of smooth flow, where minimal jumps occur between adjacent computational cells, the added viscosity is commensurately small, thereby significantly preserving the accuracy of the DG scheme. Conversely, in areas adjacent to shock waves, where discontinuities are more prominent, a larger viscosity is imparted to these cells, effectively mitigating numerical oscillations. Theoretical analysis confirms that the proposed DG scheme preserves several desirable properties, including conservation, $L^2$ stability, and optimal convergence rate. However, a direct implementation of this scheme may entail significant computational expenses and introduce additional stiffness, which can restrict the time step size for stability. To overcome this challenge, the introduction of SV can be seamlessly integrated within the spectral filtering framework, as demonstrated in \cite{gottlieb2001spectral,hesthaven2007nodal,meister2012application,hesthaven2017numerical}, resulting in a highly efficient computational implementation. The approximation properties of spectrally filtered Fourier and Legendre expansions have been rigorously analyzed in \cite{vandeven1991family,hesthaven2008filtering}, while spectral filtering has emerged as a widely applicable stabilization technique, as documented in \cite{don1994numerical,don2003multidomain}. Using this time-splitting method, the jump filter only needs to be applied after the standard DG schemes for each time step, and only the multiplication
	of the coefficients of the polynomials expanded from the numerical solution by a precomputed factor is necessary. This approach showcases an exceptional ability to automatically discern the magnitude of discontinuities and effectively mitigate numerical oscillations in their vicinity, without relying on problem-specific parameters. A splitting approach has also been applied to the cellwise $L^2$-projection damping term direclty, giving rise to the OEDG method \cite{peng2023oedg} with a specially chosen damping parameter to ensure scale and evolution invariance. Despite different derivations, both approaches yield similar filtering effects on high-order modes.
	Besides, the jump filter is seamlessly integrated into the hybrid limiter framework \cite{MR4673601}, resulting in a significant reduction of numerical dissipation while effectively suppressing numerical oscillations. Unlike traditional limiters such as minmod type slope limiters \cite{cockburn1989tvb1,cockburn1998runge} or WENO limiters \cite{qiu2005rungelimiter,zhu2020high}, the jump filter is computed directly in physical space, leading to substantial computational cost reductions. Additionally, the proposed viscosity is exclusively dependent on local cells, preserving the compactness of the DG scheme and facilitating parallel computations. The accuracy and robustness of our algorithm are demonstrated through a comprehensive set of numerical examples.

	This paper is structured as follows: In Section 2, we introduce the concept of local viscosity and the corresponding jump filter in the DG method for one- and two-dimensional hyperbolic conservation laws. We demonstrate the conservative, stable, and accurate nature of the DG scheme with this jump filter. Section 3 presents a comprehensive array of numerical examples, encompassing one-dimensional and two-dimensional cases, with a primary focus on the Euler equations. These examples cover various scenarios, including strong shock waves, shock reflection, interactions between shock waves and vortices, interactions with bubbles, and Kelvin-Helmholtz instability. Finally, Section 4 provides concluding remarks.
	\section{Scheme formulation}
	\subsection{One-dimensional scalar conservation laws}\label{sub:1dscalaralg} Consider the scalar conservation laws in one-dimensional form
	\begin{align}\label{eqn:scl}
		\begin{cases}
			u_t + f(u)_x =0, \; x\in \Omega = [a,b],\\
			u(x,0) = u_0(x),
		\end{cases}
	\end{align}
	with periodic or compactly supported boundary conditions.
	Let $\mathcal{T}_h$ denote the partition of $\Omega$ with the mesh denoted by $K_j=[x_{j-\half},x_{j+\half}]$ for $j=1,\cdots,N$. The center of the cell is $x_j=\half(x_{j-\half}+x_{j+\half})$ and the mesh size is denoted by $h_j=x_{j+\half}-x_{j-\half}$ with $h=\max\limits_{1\leq j \leq N}h_j$ being the maximum cell size. The mesh is assumed to be regular, which means the ratio between the maximum and minimum mesh sizes keeps bounded in the mesh refinements.
	
	The DG approximation space $V_h^k$  is  the space of all piecewise polynomial functions defined on  $\Omega$:
	\begin{align}
		V_h^k = \{v \in L^2([a,b]): \; v|_{K_j} \in P^k(K_j), \; j = 1,\cdots, N\}.
	\end{align}
	We also denote the inner product over the interval $K_j$ and the associated norm by
	\begin{align}
		(w, r)_{K_j} = \int_{K_j} \, w \, r \, \mathrm{d}x,\quad \norm{w}_{K_j} = \sqrt{(w, w)_{K_j}}.
	\end{align}
	It results in the semi-discrete DG scheme as follows: Find $u_h \in V_h^k$,  such that
	\begin{align} \label{scheme:DG}
		(\partial_t u_h, v)_{K_j} + <\widehat{f(u_h)}, v>_{\partial K_j} - (f(u_h), v_x)_{K_j} = 0, \; \forall v \in V_h^k \mbox{ and }  j = 1,\cdots, N,
	\end{align}
	where $<\widehat{f(u_h)}, v>_{\partial K_j} = \hat{f}_{j+\half}v(x_{j+\half}^{-}) - \hat{f}_{j-\half}v(x_{j-\half}^{+})$. Here, $\hat{f}_{j+\half}$ is the numerical flux defined on the cell interface $\partial K_j$ and in general depends on the values of $u_h$ from both sides of the interface $\hat{f}_{j+\half} = \hat{f}(u_h(x_{j+\half}^{-}), u_h(x_{j+\half}^{+}))$. The Godunov flux or the Lax-Friedrichs flux is often employed when solving scalar equations.
	
	In the linear case of  Eqn. \eqref{eqn:scl}, the smoothness of the solution depends only on the initial conditions. However,  for general nonlinear cases, the discontinuous solutions, known as shocks, may develop even if the initial data are smooth. The high order DG scheme would generate spurious oscillations when the solution contains shocks, so-called the Gibbs phenomenon. These spurious oscillations may not only generate some artificial structures, but also cause many overshoots and undershoots that make the numerical scheme less robust. To overcome this issue, the limiter approach was proposed, such as the TVB limiters \cite{cockburn1989tvb1,cockburn1989tvb2,cockburn1998runge}, the WENO limiters \cite{qiu2005rungelimiter,zhu2008runge}, and the vertex-based limiters \cite{kuzmin2010vertex,kuzmin2013slope,kuzmin2014hierarchical}, which is processed after each time stage. Another strategy, known as the entropy viscosity approach, is a predominant approach in the continuous Galerkin (CG) community for solving hyperbolic conservation laws \cite{MR0759810,MR2787948,MR3008293,guermond2008entropy}, which is based on the viscosity solution \cite{leveque1992numerical}.
	The viscosity solution of the Eqn. \eqref{eqn:scl} is defined as the solution to the equation
	\begin{align}
		u^\eps_t + f(u^\eps)_x = \eps u^\eps_{xx},
	\end{align}
	with $u^\eps(x,0) = u(x,0)$. The physical solution $u(x,t)$ is obtained as the limit viscosity solution $u(x,t) = \lim\limits_{\eps\rightarrow 0^+} u^\eps(x,t)$.
	
	We propose the following DG scheme with local viscosity: Find $u_h \in V_h^k$,  such that
	\begin{align}\label{eq:scavisscheme}
		(\partial_t u_h, v)_{K_j} + <\widehat{f(u_h)}, v>_{\partial K_j} - (f(u_h), v_x)_{K_j}  + (\nu_j (u_h)_x, v_x)_{K_j} = 0, \; \forall v \in V_h^k,
	\end{align}
	where $\nu_j$ is the local viscosity coefficient and will be determined later.
	In the DG scheme \eqref{eq:scavisscheme}, the introduction of this local viscosity term aims to mitigate spurious oscillations in shock solutions without compromising the high-order accuracy of smooth solutions. To achieve this, we define the local viscosity function as follows:
	\begin{align} \label{def_nu}
		\nu_j = \sigma_j (1-\xi^2),
	\end{align}
	where $\xi = \frac{2(x-x_j)}{h_j}$ and the jump viscosity coefficient $\sigma_j$ is given by
	\begin{align}
		\sigma_j = c_f(h_{j}\norm{\jump{u_h}}_{\partial K_j} + \sum\limits_{l=1}^k l(l+1) h_{j}^{l+1}\lVert\jump{\partial_{x}^{l} u_h}\rVert_{\partial K_j}),
	\end{align}
	with $c_f\geq0$ as a carefully designed free parameter. Here, $\norm{\jump{w}}_{\partial K_j} =| w(x_{j-\half}^{+}) -w(x_{j-\half}^{-})| + | w(x_{j+\half}^{+}) -w(x_{j+\half}^{-})|$ denotes the jump of $w$ at cell interface $\partial K_j$, and $\partial^l_x u_h$ represents the $l$-th derivative of $u_h$ with respect to $x$.
	
	Assuming $\sigma_j$ is constant in $K_j$, which can be estimated by the numerical solution at the current time stage and then through integration by parts, the local viscosity term is equivalent to:
	\begin{align}
		(\nu_j (u_h)_x, v_x)_{K_j}  &= -\sigma_j( \partial_x( (1-\xi^2) \partial_x u_h ), v )_{K_j}, \\
		&= -\sigma_j \frac{2}{h_j} ( \partial_\xi ( (1-\xi^2) \partial_\xi \tilde{u}_h ), \tilde{v})_{\widehat{K}},
	\end{align}
	where the reference cell $\widehat{K} = [-1,1]$.  The numerical solution on $K_j$ can be expressed by the Gauss-Legendre basis
	\begin{align}\label{eq:GLbasisequation}
		u_h|_{K_j} = \sum\limits_{l=0}^{k} u_j^l(t) P_l(x) = \sum\limits_{l=0}^{k} u_j^l(t) \tilde{P}_l(\xi).
	\end{align}
	Since $\tilde{P}_l(\xi)$ satisfies the Sturm-Liouville equation
	\begin{align}\label{eqn:SL}
		\frac{d}{d\xi} ( (1-\xi^2)\frac{d}{d\xi}\tilde{P}_l(\xi))  + l(l+1) \tilde{P}_l(\xi) = 0,
	\end{align}
	we obtain
	\begin{align}
		(\nu_j (u_h)_x, v_x)_{K_j}  &= \sigma_j \frac{2}{h_j} \sum\limits_{l=0}^{k} u_j^l (l(l+1)\tilde{P}_l(\xi),  \tilde{v})_{\widehat{K}}.
	\end{align}
	
	If we were to directly implement this term in \eqref{eq:scavisscheme}, it would affect the maximum allowable time step when using an explicit time discretization. Alternatively, this term can be included in a time-splitting fashion \cite{gottlieb2001spectral,hesthaven2007nodal}. In this approach, we first advance the original DG scheme \eqref{scheme:DG} by one time stage, followed by incorporating the diffusion term
	\begin{align} \label{scheme:split}
		(\partial_t u_h, v)_{K_j}  + (\nu_j (u_h)_x, v_x)_{K_j} = 0,
	\end{align}
	which is equivalent to
	\begin{align}
		(\partial_t \tilde{u}_h, \tilde{v})_{\widehat{K}}  +  \sigma_j (\frac{2}{h_j})^2 \sum\limits_{l=0}^{k} u_j^l (l(l+1)\tilde{P}_l(\xi),  \tilde{v})_{\widehat{K}} = 0.
	\end{align}
	By taking $\tilde{v} = \tilde{P}_l(\xi)$, it results that
	\begin{align}
		\frac{d}{dt} u_j^l + \sigma_j^l u_j^l = 0,\; l=0,\cdots, k,
	\end{align}
	where
	\begin{align}\label{para:damp}
		\sigma_j^l = \sigma_j (\frac{2}{h_j})^2  l(l+1).
	\end{align}
	For one time step with size $\Delta t$, this  ODE system can be solved as follows:
	\begin{align}\label{timespli:1dscalar}
		u_j^l(t+\Delta t)  = \exp(-\sigma_j^l \Delta t)u_j^l(t), \; l=0,\cdots, k.
	\end{align}
	Notice that for $l=0$, $u_j^0$ is the cell average and $\sigma_j^0 = 0 $, thus the local cell conservation of the DG scheme is preserved.

	In \eqref{para:damp}, to further minimize numerical dissipation, we opt for a smaller mode damping parameter $\sigma_j^l$ defined as follows:
	\begin{align}\label{para:damp2}
		\sigma_j^l = \sigma_{j,l} (\frac{2}{h_j})^2 l(l+1),\\
		\sigma_{j,l} =  c_f^l(h_{j}\norm{\jump{u_h}}_{\partial K_j} + \sum\limits_{m=1}^l m(m+1) h_{j}^{m+1}\lVert\jump{\partial_{x}^{m} u_h}\rVert_{\partial K_j}).\label{para:damp21}
	\end{align}
	Here, $\sigma_{j,l}$ replaces $\sigma_j$ in equation \eqref{para:damp}, focusing solely on jumps within the mode number $l$. Compared to equation \eqref{para:damp}, this damping technique eliminates the need to consider jump information from higher-order derivatives when filtering low-level moments.  Consequently, it enhances the preservation of original polynomial data, thereby maintaining the accuracy of the standard DG scheme more effectively. Besides, the free parameter $c_f^l = \frac{\beta_j }{4l(l+1)}$ where $\beta_j = |f^{'}(\bar{u}_j)|$, and $\bar{u}_j$ denotes the averages of the conservative variable of cell $K_j$.

	The idea of using jump information were explored in \cite{lu2021oscillation,liu2022essentially}, wherein an additional damping source term is introduced into the DG scheme:
	\begin{align}\label{eq:ofscheme}
		(\partial_t u_h, v)_{K_j} + <\widehat{f(u_h)}, v>_{\partial K_j} - (f(u_h), v_x)_{K_j}  +\sum\limits_{l=0}^k \frac{\tilde{\sigma}_j^l}{h_j}(u_h-P_h^{l-1}u_h, v)_{K_j} = 0,
	\end{align}
	where
	\begin{align} \label{eqn_damping_coef_1d}
		\tilde{\sigma}_j^{l} = \frac{2(2 l+1)}{2k-1} \frac{(h_j)^{l}}{l !}
		\left( \jump{ \partial_{x}^{l} u_h}_{j+\half}^2+\jump {\partial_{x}^{l} u_h }_{j-\half}^2\right)^{\frac 12},
	\end{align}
	and $P_h^l$ denotes the standard $L^2$ projection into $P^l$ polynomial space and $P_h^{-1} = P_h^0$. Here,  $\jump{w}_{j+\half}=w(x_{j+\half}^{+})-w(x_{j+\half}^{-})$. 
	
	Recently, a novel approach known as the oscillation-eliminating DG (OEDG) method
	has been introduced for hyperbolic conservation laws \cite{peng2023oedg}, building upon the foundations of the OFDG method. In the OEDG method, the damping term is also conceptualized as a modal filter, leveraging time-splitting techniques. The jump filter approach is rooted in the local viscosity term in \eqref{eq:scavisscheme}, whereas the OFDG/OEDG approach is based on the damping term in \eqref{eq:ofscheme}. For ease of comparison, under our notation the damping effect of the OEDG approach can be written as 
	\begin{align*}
		\sigma_j^{l,\text{OE}} = \sigma_{j,l}^{\text{OE}} (\frac{1}{h_j})^2,\\
		\sigma_{j,l}^{\text{OE}} =  c_f^{\text{OE}}(h_{j}\norm{\jump{u_h}}_{\partial K_j}+ \sum\limits_{m=1}^l \frac{2m+1}{m!}h_j^{m+1}\lVert\jump{\partial_{x}^{m} u_h}\rVert_{\partial K_j} ).
	\end{align*}
	where \begin{align*}c_f^{\text{OE}} =  
		\begin{cases} 
			0, & \text{if } u_h=\text{avg}(u_h), \\
			\frac{\beta_j}{2(2k-1)\norm{u_h-\text{avg}(u_h)}_{L^{\infty}(\Omega)}}, & \text{otherwise},
		\end{cases}
	\end{align*}
	and $\text{avg}(u_h)$ denotes the domain average of $u_h$.  Although the derivations differ, both strategies ultimately result in similar damping and filtering effects for high-order modes in numerical solutions. Both approaches can be easily adapted to multidimensional scenarios and unstructured meshes due to their local properties in the weak form of the scheme. Additionally, both the OEDG and jump filter methods are independent of the characteristic decomposition. The OEDG method is designed to be scale/evolution invariance through a careful choice of parameters, while such invariance is not explicitly guaranteed in the original OFDG formulation. The jump filter adopts the same parameter $\beta_j$ as used in the OEDG approach for dissipation control. However, it does not possess scale invariance.
	
	\remark{In the local viscosity function \eqref{def_nu}, we choose the weight function \(1 - \xi^2\) due to the the Sturm-Liouville equation \eqref{eqn:SL} for the Gauss-Legendre basis functions over reference cell $\hat{K}=[-1,1]$. It is important to note that using a more general Jacobi basis is equally applicable to the entire filtering process, requiring only a modification of the corresponding weight function. The Jacobi polynomials, denoted by $J^{(\alpha,\beta)}_l(\xi)$, are orthogonal with respect to the weight function $\omega^{(\alpha,\beta)}(\xi) := (1-\xi)^\alpha (1+\xi)^\beta$ over $\hat{K}$, as follows:
		\begin{equation}\label{eq:orth}
			\int_{-1}^{1} J^{(\alpha,\beta)}_{l_1}(\xi) J^{(\alpha,\beta)}_{l_2}(\xi) \omega^{(\alpha,\beta)}(\xi) \, d\xi = \gamma^{\alpha,\beta}_{l_1} \delta_{l_1l_2},  
		\end{equation}
		where $\gamma^{\alpha,\beta}_{l_1} = ||J^{\alpha,\beta}_{l1}||^2 _{\omega_{\alpha,\beta}}$, and $\delta_{l_1l_2} = 
		\begin{cases} 
			1, & \text{if } l_1=l_2 , \\
			0 ,& \text{otherwise}.
		\end{cases}$. The Jacobi polynomials satisfy the following Sturm-Liouville problem:
		\begin{equation}\label{eq:generalSLeq}
			\partial_\xi( \omega^{(\alpha+1,\beta+1)}(\xi)\partial_\xi J^{(\alpha,\beta)}_{l}(\xi))+ l(l+\alpha+\beta +1)\omega^{(\alpha,\beta)}(\xi)J^{(\alpha,\beta)}_l(\xi) =0. 
		\end{equation}
		Recall the special case of \( J^{(0,0)}_l(\xi) \), known as the Gauss-Legendre polynomials $\tilde{P}_l(\xi)$. The numerical solution on $K_j$ can be expressed using the Jacobi basis:
		Jacobi basis
		\begin{equation}\label{eq:generalJacobi}
			u_h|_{K_j} = \sum_{l=0}^{k}u_j^l(t)J^{(\alpha,\beta)}_l.
		\end{equation}
		Thanks to the Eqn. \eqref{eq:generalSLeq}, we have  
		\begin{align}
			(\nu_j (u_h)_x, v_x)_{K_j}  &= \sigma_j \frac{2}{h_j} \sum\limits_{l=0}^{k} u_j^l (l(l+\alpha+\beta+1)J^{(\alpha,\beta)}_l(\xi),  \tilde{v})_{\widehat{K}}.
		\end{align}
		This term can also be utilized with time-splitting techniques. The only distinction lies in the parameter selection, where the coefficients associated with \( l(l+1) \) are replaced by \( l(l+\alpha+\beta+1) \).

		\subsection{Stability and error estimates}
		The conservation property of the scheme can be readily obtained by setting $v=1$ in \eqref{eq:scavisscheme}, leading to
		\begin{align}
			\dfrac{d}{ d t} \int_{K_{j}} u_h \mathrm{d}x = -\hat{f}_{j+\half}+\hat{f}_{j-\half}.
		\end{align}
		Summing over $j$, with either periodic or compactly supported boundary conditions, we achieve global conservation.
		The $L^2$ stability of the semi-discrete scheme is achieved by taking $v=u_h$ in \eqref{eq:scavisscheme}. Summing it over $j$, we obtain
		\begin{align}
			\dfrac{1}{2}\dfrac{d}{ dt} \norm{u_h}^2 = -\sum_{j}\Theta_{j+\half}  - \sum_{j}\norm{\sqrt{\nu_j}(u_h)_x}^2_{K_j},
		\end{align}
		where $\Theta_{j+\half} =  \int_{u_h(x_{j+\half}^{-})}^{u_h(x_{j+\half}^{+})} \left( f(y) - \hat{f}(u_h(x_{j+\half}^{-}), u_h(x_{j+\half}^{+}) ) \right) \, dy\geq0$ as for the cell entropy inequality for \eqref{scheme:DG}  in \cite{jiang1994cell}, and $\norm{u_h(\cdot,t)}$ denotes the global $L^2$ norm. Consequently, we have the $L^2$ stability $\norm{u_h(\cdot,t)}\leq\norm{u_h(\cdot,0)}$.

		In the following, we will present  stability analysis and the optimal error estimate of the fully discrete DG scheme with the jump filter for the linear hyperbolic conservation laws, i.e., $f(u)=cu$. Without loss of generality, we assume $c=1$. 
		
		Owing to the identical RKDG stages in \eqref{rkdg1} and the similar filtering (damping/local viscosity) steps in \eqref{err_rk1}-\eqref{err_rk2}, the proof can be carried out by closely following the analysis of the OEDG approach in \cite{peng2023oedg}. For the fully discrete scheme RKDG scheme with the filter, the local viscosity provides the stability while the choice of the viscosity coeficient preserves spatial accuracy. Together with the general error analysis of the original RKDG method \cite{MR4125673,MR4161755,MR3977110}, this allows the argument to proceed in a straightforward manner. Therefore, we omit the detailed proof here and refer the reader to \cite{peng2023oedg} for further details.
		
		Take the upwind numerical flux $\widehat{f(u_h)}=(u_h)^{-}$ in (\ref{scheme:DG}) and the DG scheme is rewritten as
		\begin{align}\label{semi_dg1}
			(\partial_t u_h, v)= \mathcal{D}(u_h,v),
		\end{align}
		where $(w,r)=\sum_{j=1}^{N}(w,r)_{K_j}$, $\mathcal{D}(w,r)=\sum_{j=1}^{N}\mathcal{D}_j(w,r)$ and
		\begin{align}
			\mathcal{D}_j(w,r)=(w,r_{x})_{K_j}-w^{-}_{j+\frac{1}{2}}r_{j+\frac{1}{2}}^{-}+w^{-}_{j-\frac{1}{2}}r_{j-\frac{1}{2}}^{+}.\label{DG_Op}
		\end{align}
		Applying the $r$th-order $s$-stage RK method with Shu-Osher representation in the semi-discrete DG scheme (\ref{semi_dg1}), we obtain the fully discrete RKDG scheme
		\begin{align}
			u_h^{n,0}&=u_h^{n},\nonumber\\
			(u_h^{n,l+1},v) &= \sum_{0\leq m\leq l}\biggl\{c_{lm}(u_{h}^{n,m},v)+\tau d_{lm}\mathcal{D}(u_{h}^{n,m},v)\biggr\},\,\,l=0,1,\ldots,s-1,\label{rkdg1}\\
			u_h^{n+1} &= u_h^{n,s},\nonumber
		\end{align}
		where $u_h^{n}$ is the numerical solution at $n$th time level, $\tau$ is the uniform time step and $t_n=n\tau$. Define $u_{\sigma}^{0}=u_h^0$ and $u_{\sigma}^{n,0}=u_\sigma^n$. Then we present the RKDG scheme with a jump filter after each RK stage:
		\begin{align}\label{err_rk1}
			(u_h^{n,l+1},v)=\sum_{0\leq m\leq l}\biggl\{c_{lm}(u_{\sigma}^{n,m},v)+\tau d_{lm}\mathcal{D}(u_{\sigma}^{n,m},v)\biggr\},\,\,l=0,1,\ldots,s-1,
		\end{align}
		and $u_\sigma^{n,l+1}$ is the solution of the following initial value problem
		\begin{align}\label{err_rk2}
			\begin{split}
				&\frac{d}{dt}(u_{\sigma},v)_{K_j}+(\nu_{j}\partial_{x}u_{\sigma},v_{x})_{K_{j}}=0,\,\,\forall v\in V_h^k,\\
				&u_{\sigma}(x,0)=u_h^{n,l+1},
			\end{split}
		\end{align}
		where $u_{\sigma}(x,t)\in V_h^k$ and $\nu_{j}$ is defined by (\ref{def_nu}). Finally, $u_{\sigma}^{n+1}=u_\sigma^{n,s}$.
		
		\begin{theorem}\label{thm:stabilityanalysis}
			Suppose that without the jump filter, the fully discrete RKDG scheme is stable under the CFL condition $\tau\leq\gamma h^{\kappa}$, where $\kappa=1+1/r $ for $r\equiv 1$ (mod $4$), $\kappa=1+1/(r+1)$ for $r\equiv 2$ (mod $4$), and $\kappa=1$, $C_{*}=0$ for $r\equiv3$ (mod $4$). Let $u_{\sigma}^{n}$ be the solution of the fully discrete RKDG scheme with the jump filter in (\ref{err_rk1})-(\ref{err_rk2}). Then we have for any $n$ that
			\begin{align*}
				\norm{u_{\sigma}^{n+1}}^{2}\leq(1+C\tau)\norm{u_{\sigma}^{n}}^{2}
			\end{align*}
			under the same CFL condition $\tau \leq \gamma h^\kappa$. Here $C$ is a positive constant independent of $\tau$, $h$ and $n$.
		\end{theorem}

		\begin{theorem}\label{thm:erroraccuracy1}
			Suppose that without the jump filter, the fully discrete RKDG scheme is stable under the CFL condition $\tau\leq\gamma h^{\kappa}$, which is stated in Theorem \ref{thm:stabilityanalysis}. If the exact solution $u(x,t)$ is sufficiently smooth, $u_{\sigma}^{n}$ is the solution of the fully discrete RKDG scheme with the jump filter in (\ref{err_rk1})-(\ref{err_rk2}) and $\norm{u_\sigma^{0}- u(\cdot, 0) }\leq Ch^{k+1}$ with $k\geq1$, then we have the optimal error estimate
			\begin{align*}
				\norm{u_{\sigma}^{n}-u(\cdot,t_n)}\leq C( h^{k+1}+\tau^{r}).
			\end{align*}
		\end{theorem}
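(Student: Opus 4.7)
The plan is to combine a Gauss-Radau-projection error analysis of the underlying RKDG scheme with a direct consistency estimate for the jump filter. Introduce the upwind Gauss-Radau projection $P_h^-$ onto $V_h^k$, which satisfies the orthogonality $\mathcal{D}(u - P_h^- u, v) = 0$ for all $v \in V_h^k$ together with $\norm{u - P_h^- u}\leq C h^{k+1}$. Decompose the error as $u_\sigma^n - u(\cdot,t_n) = \xi^n - \eta^n$ with $\xi^n = u_\sigma^n - P_h^- u(\cdot,t_n) \in V_h^k$ and $\eta^n = u(\cdot,t_n) - P_h^- u(\cdot,t_n)$; since $\norm{\eta^n}$ is already $O(h^{k+1})$, it suffices to control $\norm{\xi^n}$ inductively in $n$.

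The central observation about the filter \eqref{timespli:1dscalar} is that, once the damping parameters $\sigma_j^l\geq 0$ are determined from $u_h^{n+1}$, the map is linear on $V_h^k$ and multiplies each Legendre mode by $\exp(-\sigma_j^l\tau)\in(0,1]$, hence it is an $L^2$-contraction. Using this linearity I would split
\begin{align*}
\xi^{n+1} = \bigl[\mathcal{F}(P_h^- u^{n+1}) - P_h^- u^{n+1}\bigr] + \mathcal{F}\bigl(u_h^{n+1} - P_h^- u^{n+1}\bigr),
\end{align*}
which we think of as a \emph{filter consistency} part plus a \emph{propagation} part. The propagation part, by contraction, is bounded by $\norm{u_h^{n+1} - P_h^- u^{n+1}}$; but this is exactly the one-step error of the unfiltered forward-Euler DG scheme starting from $u_\sigma^n$, so the CFL $\tau\leq\gamma h^2$, the Gauss-Radau orthogonality and the standard RKDG stability framework of \cite{MR4125673,MR4161755,MR3977110} give a recursion of the form
\begin{align*}
\norm{u_h^{n+1} - P_h^- u^{n+1}}^2 \leq (1 + C\tau)\norm{\xi^n}^2 + C\tau\bigl(\tau + h^{k+1}\bigr)^2.
\end{align*}

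The main new work is bounding the filter-consistency term. In modal form on $K_j$,
\begin{align*}
\norm{\mathcal{F}(P_h^- u^{n+1}) - P_h^- u^{n+1}}_{K_j}^2 = \sum_{l=1}^k \bigl(1 - e^{-\sigma_j^l\tau}\bigr)^2 \abs{\hat{v}_j^l}^2 \norm{\tilde{P}_l}_{K_j}^2,
\end{align*}
where $\hat{v}_j^l$ are the Legendre coefficients of $P_h^- u^{n+1}$. Using $1 - e^{-x}\leq x$ and the definition of $\sigma_j^l$ in \eqref{para:damp2}--\eqref{para:damp21}, the task reduces to bounding the interface jumps $\norm{\jump{\partial_x^m u_h^{n+1}}}_{\partial K_j}$. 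Because $u_h^{n+1}$ is an $O(\tau + h^{k+1})$ perturbation of the smooth projection $P_h^- u^{n+1}$ (inductively, via the propagation bound at the previous step), standard trace, inverse and interpolation estimates give $\norm{\jump{\partial_x^m u_h^{n+1}}}_{\partial K_j}\lesssim h^{k+1-m}$ for $0\leq m\leq k$, whence $\sigma_{j,l}\lesssim h^{k+2}$ and $\sigma_j^l\tau\lesssim \tau h^k$; combined with the natural decay of the higher Legendre modes of a smooth projection this produces a per-step consistency bound of order $\tau(\tau + h^{k+1})$.

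Combining both pieces yields the single-step recursion $\norm{\xi^{n+1}}^2 \leq (1 + C\tau)\norm{\xi^n}^2 + C\tau(\tau + h^{k+1})^2$, and discrete Gronwall together with $\norm{\xi^0}\leq Ch^{k+1}$ gives $\norm{\xi^n}\leq C(\tau + h^{k+1})$; a triangle inequality with the projection bound then finishes. The principal obstacle is the filter-consistency step: although $1 - e^{-\sigma_j^l\tau}\leq \sigma_j^l\tau$ is cheap, achieving the optimal power of $h$ requires a delicate balance between the jump-driven damping rate (only $O(h^k)$ for smooth data) and the modal decay of $P_h^- u^{n+1}$, since a loose analysis easily loses one or two powers of $h$ and yields only suboptimal rates.
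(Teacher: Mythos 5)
Your overall architecture---Gauss--Radau projection $P_h^-$, the decomposition into $\xi^n$ and $\eta^n$, and a one-step recursion closed by discrete Gronwall---matches the paper's, but your treatment of the filter step is genuinely different and, in one respect, cleaner. The paper regards the filter as the solution of the diffusion problem (\ref{err_rk2}) and performs an energy estimate on $u_\sigma - u_h^{n+1}$ (Lemma \ref{err_lmm3}); this produces the term $\nu_j(u_h^{n+1})\partial_x u_h^{n+1}$ and therefore forces the a priori assumption $\norm{\partial_x \zeta^n}_{L^\infty(\Omega)}\leq C$ in (\ref{err_assum}), which must be verified at the end by a bootstrap in $n$. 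You instead exploit that, once $\sigma_j^l\geq 0$ is frozen from $u_h^{n+1}$, the filter is a linear modal multiplier with factors $e^{-\sigma_j^l\tau}\in(0,1]$, hence an $L^2$-contraction, and you add and subtract $P_h^- u^{n+1}$ \emph{before} filtering. The contraction disposes of the propagation part (your recursion for $\norm{u_h^{n+1}-P_h^-u^{n+1}}$ under $\tau\leq\gamma h^2$ is sound and is essentially the paper's Lemma \ref{err_lmm2} in energy form), while the consistency part pairs the damping rates against the modal coefficients of the \emph{smooth projection}, $|u_j^l|\lesssim h^l$, whose boundedness is known a priori. This removes the need for (\ref{err_assum}) and the bootstrap altogether, which is a real structural advantage over the paper's route.

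One step is overclaimed, however: the cell-wise uniform rates $\norm{\jump{\partial_x^m u_h^{n+1}}}_{\partial K_j}\lesssim h^{k+1-m}$ and $\sigma_{j,l}\lesssim h^{k+2}$ do not follow from the global bound $\norm{\zeta^{n+1}}\lesssim \tau+h^{k+1}$. The trace and inverse estimates are local, so on an individual cell the jump of $\partial_x^m \zeta^{n+1}$ is only controlled by $h^{-m-\frac12}\norm{\zeta^{n+1}}_{K_j\cup K_{j\pm1}}$, and the local $L^2$ norm need not be small relative to the global one on any particular cell; a uniform pointwise rate would require local error estimates you do not have. The repair is exactly what the paper's Lemma \ref{err_lmm1} does: keep the local norms, i.e.\ $\sigma_{j,l}\leq C h^{\frac12}\bigl(\norm{\zeta^{n+1}}_{K_j}+\norm{\eta^{n+1}}_{K_j}\bigr)$ (up to neighboring cells), then estimate each term of your modal sum by $\bigl(1-e^{-\sigma_j^l\tau}\bigr)\abs{u_j^l}\,\norm{\tilde{P}_l}_{K_j}\leq C\tau h^{l-1}\bigl(\norm{\zeta^{n+1}}_{K_j}+\norm{\eta^{n+1}}_{K_j}\bigr)$ for $l\geq1$, and sum squares over $j$. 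This yields the per-step consistency bound $C\tau\bigl(\norm{\zeta^{n+1}}+h^{k+1}\bigr)$ rather than $C\tau(\tau+h^{k+1})$ outright, but combined with your propagation recursion it still closes the Gronwall induction and delivers the stated optimal estimate, so the gap is fixable within your own framework.
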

		
		\subsection{Two-dimensional scalar conservation laws}\label{sub:2dscalaralg}
		In the two-dimensional case, the situation is similar. Consider the two-dimensional scalar conservation laws
		\begin{align}\label{eqn:2sdscl}
			\begin{cases}
				u_t + \nabla\cdot\boldsymbol{F}(u)=0, \; \\
				u(\boldsymbol{x},0) = u_0(\boldsymbol{x}),
			\end{cases}
		\end{align}
		with suitable boundary conditions on the rectangular domain $\Omega$. After we have the regular Cartesian grid $\mathcal{T}_h$ of $\Omega$, the cell $K_{i,j}\in\mathcal{T}_h$ is denoted by $K_{i,j}=[x_{i-\half},x_{i+\half}]\times [y_{j-\half},y_{j+\half}]$ for $i=1,\cdots,N_x$; $j=1,\cdots,N_y$. The center of the cell $K_{i,j}$ is $(x_i,y_j)=(\half(x_{i-\half}+x_{i+\half}),\half(y_{j-\half}+y_{j+\half}))$ and the mesh size is denoted by $h_{x_i}=x_{i+\half}-x_{i-\half}$, $h_{y_j}=y_{j+\half}-y_{j-\half}$.
		
		The DG scheme with local viscosity on the rectangular mesh is: Find $u_h \in V_h^k = \{v \in L^2(\Omega): \; v|_{K_{i,j}} \in P^k(K_{i,j}), \; i= 1,\cdots, N_x; \; j = 1,\cdots, N_y\}$, such that for any $v \in V_h^k$, we have
		\begin{equation}
			\label{eq:semiloc_rec}
			(\partial_tu_h , v)_{{K_{i,j}}}  - (\boldsymbol{F}(u_h), \nabla v)_{{K_{i,j}}} +<\hat{\boldsymbol{F}}_h^{\boldsymbol{n}},v>_{\partial K_{i,j}}  +   (	\boldsymbol{\nu}_{i,j}\odot\nabla u_h ,  \nabla v)_{K_{i,j}}= 0 ,    \;
		\end{equation}
		where $<\hat{\boldsymbol{F}}_h^{\boldsymbol{n}},v>_{\partial K_{i,j}} = \int_{\partial K_{i,j}}\hat{\boldsymbol{F}}_h \cdot \boldsymbol{n} v \mathrm{d}s$ and $\odot$ denotes Hadamard product. Here, $ \boldsymbol{n} = (n_x, n_y)^T$ is the outward unit normal of the cell boundary $\partial_{K_{i,j}}$ and $\hat{\boldsymbol{F}}_h$ is a two point numerical flux. Following the one-dimensional case, we can choose the local viscosity function as
		\begin{align}
			\boldsymbol{\nu}_{i,j} =(\boldsymbol{\nu}_{i,j}^{(1)},\boldsymbol{\nu}_{i,j}^{(2)}) =  \sigma_{i,j} (\frac{h_{x_i}}{h_{y_j}}(1-\xi^2),\frac{h_{y_j}}{h_{x_i}}(1-\eta^2)),
		\end{align}
		where  $\xi = \frac{2(x-x_{i})}{h_{x_{i}}}$, $\eta = \frac{2(y-y_{j})}{h_{y_{j}}}$.
		
		By integration by parts, the local viscosity term is equivalent to
		\begin{equation}
			\label{eq:local2d}
			(\boldsymbol{\nu}_{i,j}\odot\nabla u_h ,  \nabla v)_{K_{i,j}} = -\sigma_{i,j}(\partial_\xi((1-\xi^2)\partial_\xi  \tilde{u}_h) + \partial_{\eta}((1-\eta^2)\partial_{\eta} \tilde{u}_h),\tilde{v})_{\widehat{K}},
		\end{equation}
		where the reference cell $\widehat{K} = [-1,1]^2$.
		
		The numerical solution on $K_{i,j}$ can be expressed by the Gauss-Legendre basis
		\begin{align}
			u_h|_{K_{i,j}} = \sum\limits_{l=0}^k\sum\limits_{p+q=l} u_{i,j}^{p,q}(t) P_{p,q}(x,y) =\sum\limits_{l=0}^k \sum\limits_{p+q=l} u_{i,j}^{p,q}(t) \tilde{P}_{p,q}(\xi,\eta).
		\end{align}
		Since $\tilde{P}_{p,q}(\xi,\eta)$ satisfies the Sturm-Liouville equation
		\begin{align}
			\frac{\partial}{\partial\xi} ( (1-\xi^2)\frac{\partial}{\partial\xi}\tilde{P}_{p,q}(\xi,\eta)) + \frac{\partial}{\partial\eta} ( (1-\eta^2)\frac{\partial}{\partial\eta}\tilde{P}_{p,q}(\xi,\eta))+ \lambda_{p,q} \tilde{P}_{p,q}(\xi,\eta) = 0,
		\end{align}
		where\begin{align}
			\lambda_{p,q}  = p(p+1)+q(q+1).
		\end{align}
		Then we have
		\begin{align}
			(\boldsymbol{\nu}_{i,j}\odot\nabla u_h ,  \nabla v)_{K_{i,j}}=\sigma_{i,j} \sum\limits_{l=0}^k\sum\limits_{p+q=l} u_{i,j}^{p,q}(t) ( \lambda_{p,q}\tilde{P}_{p,q}, \tilde{v})_{\widehat{K}}.
		\end{align}
		
		We also use the time splitting fashion to deal with this term as in the one-dimensional case. By taking $\tilde{v} = \tilde{P}_{p,q}(\xi,\eta)$, we obtain the following ODE system
		\begin{align}
			\frac{d}{dt} u_{i,j}^{p,q} + \sigma_{i,j}^l u_{i,j}^{p,q} = 0,\; p+q=l=0,\cdots, k,
		\end{align}
		where
		\begin{align}\label{para:2ddamp}
			\sigma_{i,j}^l = \sigma_{i,j} \frac{2}{h_{x_{i}}}\frac{2}{h_{y_{j}}}   \lambda_{p,q} .
		\end{align}
		For one time step $\Delta t$, it can be solved as follows:
		\begin{align}
			u_{i,j}^{p,q}(t+\Delta t)  = \exp(-\sigma_{i,j}^l \Delta t)u_{i,j}^{p,q}(t), \; p+q=l=0,\cdots, k.
		\end{align}
		The jump viscosity coefficient $\sigma_{i,j}$ is given by
		\begin{align}
			\sigma_{i,j} = c_{f,x}^{p,q}(h_{x_{i}}\norm{\jump{u_h}}_{\partial K_{i,j}^{\text{vertical}}} + \sum\limits_{l=1}^k l(l+1) h_{x_{i}}^{l+1}\sum_{|\boldsymbol{\alpha}|=l}\norm{\jump{\partial^{\boldsymbol{\alpha}}u_h}}_{\partial K_{i,j}^{\text{vertical}}}) \nonumber  \\
			+ c_{f,y}^{p,q}(h_{y_{j}}\norm{\jump{u_h}}_{\partial K_{i,j}^{\text{horizontal}}} + \sum\limits_{l=1}^k l(l+1) h_{y_{j}}^{l+1}\sum_{|\boldsymbol{\alpha}|=l} \norm{\jump{\partial^{\boldsymbol{\alpha}}u_h}}_{\partial K_{i,j}^{\text{horizontal}}}), \label{eq:2dalljumpvis1}
		\end{align}
		where $c_{f,x}^{p,q}$ and $c_{f,y}^{p,q}$ are  a free parameter. Here, the vector $\boldsymbol{\alpha} = (\alpha_1, \alpha_2)$ is the multi-index with $|\boldsymbol{\alpha}| = \alpha_1+ \alpha_2$, and $\partial^{\boldsymbol{\alpha}} w$ is defined as $\partial^{\boldsymbol{\alpha}}{w}= \partial^{\alpha_1}_{x}\partial^{\alpha_2}_{y}{w}$. Besides, $\norm{\jump{w}}_{\partial K_{i,j}^{\text{vertical}}}= \frac{1}{h_{y_j}}\int_{y_{j-\half}}^{y_{j+\half}}  \lVert\jump{w}_{i+\half,j}\rVert+\lVert\jump{w}_{i-\half,j}\rVert\mathrm{d}s$, where $\lVert\jump{w}_{i+\half,j}\rVert = |w(x_{i+\half,j}^{-},y) - w(x_{i+\half,j}^{+},y)|$ represents the absolute value of the jump of $w$ across the right interface of an element $K_{i,j}$. A similar formula can be used to express $\norm{\jump{\partial^{\boldsymbol{\alpha}}u_h}}_{\partial K_{i,j}^{\text{horizontal}}}$. The trapezoidal rule is employed to approximate this damping term along each edge of $K_{i,j}$ in all our two-dimensional tests. Just as in the one-dimensional case, to further minimize numerical dissipation, we can also adjust the mode damping parameter $\sigma_{i,j}^l$ as follows:
		\begin{align}\label{para:damp22d}
			\sigma_{i,j}^l = \sigma_{i,j,l} (\frac{2}{h_{x_{i}}}) (\frac{2}{h_{y_{j}}})  \lambda_{p,q},\\
			\sigma_{i,j,l} =  c_{f,x}^{p,q}( h_{x_{i}}\norm{\jump{u_h}}_{\partial K_{i,j}^{\text{vertical}}} + \sum\limits_{m=1}^l m(m+1) h_{x_{i}}^{m+1}\sum_{|\boldsymbol{\alpha}|=m}\norm{\jump{\partial^{\boldsymbol{\alpha}}u_h}}_{\partial K_{i,j}^{\text{vertical}}})  \nonumber \\
			+ c_{f,y}^{p,q}(h_{y_{j}}\norm{\jump{u_h}}_{\partial K_{i,j}^{\text{horizontal}}} + \sum\limits_{m=1}^l m(m+1) h_{y_{j}}^{m+1}\sum_{|\boldsymbol{\alpha}|=m}\norm{\jump{\partial^{\boldsymbol{\alpha}}u_h}}_{\partial K_{i,j}^{\text{horizontal}}}). \label{para:damp22d1}
		\end{align}
		We replace $\sigma_{i,j}$ by $\sigma_{i,j,l}$ in \eqref{para:2ddamp},  which considers jumps only within the mode number $l$. Additionally, this damping term \eqref{para:damp22d} serves to determine the magnitude of the viscosity term throughout this paper, allowing for adaptive damping of various moments based on the jump size. Again, it leads to reduced numerical dissipation compared with \eqref{eq:2dalljumpvis1}.
		
		Finally, it's worth noting that there are no fundamental obstacles to extending the jump filter to Cartesian meshes in higher dimensions.

		\subsection{Systems of conservation laws}\label{sub:sysalg}
		Since the system of conservation laws can be regarded as multiple scalar equations coupled together, extending the jump filter to systems does not pose any fundamental difficulty. Considering one-dimensional system $\boldsymbol{u}_t+\boldsymbol{f}(\boldsymbol{u})_x=\boldsymbol{0}$ with suitable boundary conditions where $\boldsymbol{u} = (u^{(1)}, \cdots , u^{(\mathcal{N})})$, and $\mathcal{N}$ represents the number of equations in the system. By employing the time-splitting method, we derive the following system of ODEs:
		\begin{align}\label{eq:1dsystemstime}
			\frac{d}{dt} \boldsymbol{u}_j^l + \sigma_j^l \boldsymbol{u}_j^l = \boldsymbol{0},\; l=0,\cdots, k,
		\end{align}
		where
		\begin{align}\label{para:dampsystems1d}
			\sigma_j^l = \max\limits_{1\leq s\leq \mathcal{N} }\sigma_{j,l}(u_{h}^{(s)}) (\frac{2}{h_j})^2 l(l+1).
		\end{align}
		Here, $\sigma_{j,l}(u_{h}^{(s)})$ denotes the jump viscosity coefficient based on the $s$-th variable of $\boldsymbol{u}_h=(u_{h}^{(1)},\cdots,u_{h}^{(\mathcal{N})})$, which is computed by \eqref{para:damp21}.  Note that the selection of $c_f^l$ in \eqref{para:damp21} is same for all components in our numerical simulations. 
		
		Each component $u_{h}^{(s)}$ can be represented on cell $K_j$ as $u_h^{(s)}|_{K_j} = \sum\limits_{l=0}^{k} u_j^{l,(s)}(t) P_l(x)$, and $ \boldsymbol{u}_j^l = (u_j^{l,(1)},\cdots,u_j^{l,(\mathcal{N})} )$. 
		For the one time step size $\Delta t$, this  ODE system can be solved as follows:
		\begin{align}\label{timespli:sys}
			\boldsymbol{u}_j^l(t+\Delta t)  = \exp(-\sigma_j^l \Delta t)\boldsymbol{u}_j^l(t), \; l=0,\cdots, k.
		\end{align}
		When dealing with systems of conservation laws, our approach is to compute the jump viscosity coefficient for each component of the system and select the maximum value to ensure the stability of the scheme when solving problems with strong shock waves.		
		The damping coefficients can also be chosen component-wise. However, based on our numerical tests, the component-wise selection tends to be less stable than the method described in \eqref{para:dampsystems1d}-\eqref{timespli:sys}, especially when applied to problems involving strong shock waves, such as Examples 3.6 and 3.7. Consequently, we use the method in \eqref{para:dampsystems1d}-\eqref{timespli:sys} for our numerical simulations.

		In this paper, we focus primarily on the Euler equations. In the 1D system, the parameter $c_f^l = \frac{\beta_j}{4l(l+1)} \frac{1}{\bar{H}_j}$ appears in \eqref{para:damp21}, while in 2D, $(c_{f,x}^{p,q},c_{f,y}^{p,q}) = \frac{1}{4\lambda_{p,q}} \frac{1}{\bar{H}_{i,j}}( h_{x_i}\beta_{i,j}^x,h_{y_j}\beta_{i,j}^y)$ where the additional factors $h_{x_i}$ and $h_{y_j}$ are introduced to control the dissipation associated with each spatial dimension. Here, $\beta_j$ is the spectral radius of the Jacobian matrix $\frac{\partial f}{\partial u}(\bar{u}_j)$, and in 2D, $\beta_{i,j}^x$ and $\beta_{i,j}^y$ are estimates of the local maximum wave speeds in the $x$- and $y$-directions, respectively.  $\bar{H}_j$ or $\bar{H}_{i,j}$ represents the average enthalpy within cell $K_j$ or $K_{i,j}$, respectively.

		The selection of these parameters is primarily based on numerical performance, with a focus on ensuring that appropriate numerical dissipation is introduced to control oscillations without causing excessive or insufficient dissipation. Our numerical tests in this paper demonstrate that this parameter selection effectively captures shock waves.  For more specific requirements, such as achieving scale/evolution invariance or making the parameters dimensionless, these can be addressed by adjusting the choice of $c_f^l$ or $(c_{f,x}^{p,q},c_{f,y}^{p,q})$, as indicated in \cite{peng2023oedg} for the OEDG method, which is designed with scale/evolution invariance in mind.

		We also explored the use of empirical parameters, such as setting $c_f^l = \frac{\beta_j}{4l(l+1)} \frac{1}{\bar{R}_j}$ or $c_f^{l,(s)} = \frac{\beta_j}{4l(l+1)} \frac{1}{|\bar{u}_h^{(s)}|}$ in 1D Euler equations.  Similar parameters can be applied in 2D. Here, $\bar{R}_j=\frac{\bar{p}_j}{\bar{\rho}_j^\gamma}$ represents the average entropy within cell $K_j$, and $\bar{u}_{h}^{(s)}$ denotes the average of $s$-th variable of the system. These parameters generally help control numerical oscillations. However, in some numerical examples, particularly in 2D problems, they result in excessive numerical dissipation, which can be mitigated by the hybrid approach  in \cite{MR4673601} outlined in Algorithm \ref{alg:hybridlimiterjumpfilter}.

		We are aware that classical limiters like the TVB limiter or the WENO-type limiter often yield satisfactory outcomes when applied in the characteristic space. In our case, we compute the jump and then apply damping to various moments based on the jump magnitude. However, it's worth noting that the eigenvectors of a matrix are not unique, and selecting different eigenvectors can result in different damping values if we calculate the jump in characteristic space, potentially resulting in varying computed results. To address this challenge, as in \cite{peng2023oedg} we directly compute the jumps in the conservative space and select an appropriate $c_f^l$ in \eqref{para:damp21}, or $c_{f,x}^{p,q}$ and  $c_{f,y}^{p,q}$ in \eqref{para:damp22d1}. For more demanding problems, characteristic decomposition remains applicable. However, our numerical experiments demonstrate that, when applied in the conservative variables, this approach can still effectively mitigate spurious oscillations, even in the presence of strong shocks.

		\section{Numerical experiments}
		This section presents several numerical examples, encompassing both one-dimensional and two-dimensional benchmark cases, with a primary focus on the compressible Euler equations. The aim is to showcase the effectiveness of the DG scheme equipped with a jump filter in maintaining the accuracy of the standard DG scheme. Particularly in scenarios featuring strong shock waves, we observe that the jump filter efficiently controls numerical oscillations, yielding results comparable to those achieved with the TVB limiter.
		
		To further reduce numerical dissipation, we adopt the hybrid approach detailed in \cite{MR4673601}. This approach integrates the jump filter as the lower-order limiter. Specifically, we use the jump indicator introduced in \cite{MR4673601} to identify ``troubled" cells. On these cells, we apply the jump filter to compute lower-order polynomials that control numerical oscillations. Finally, we combine these lower-order polynomials with the higher-order DG polynomials, weighting them appropriately to obtain the mixed polynomials. We summarize this hybrid procedure for 1D scalar problems as an example in Algorithm \ref{alg:hybridlimiterjumpfilter}.
		
		\begin{algorithm}
			\caption{Hybrid Procedure Associated with the Jump Filter}\label{alg:hybridlimiterjumpfilter}
			\begin{algorithmic}[1]
				\State \textbf{Initialize:} For each cell $K_j$, compute the DG solution $u_{h}^{DG}$ from the standard DG scheme.
				\State \textbf{Determine the jump indicator:} For each cell $K_j$, calculate the jump indicator as $\eta_j = \frac{\lVert \jump{u_h^{DG}} \rVert_{\partial K_j}}{2h_j}$.
				\If{$\eta_j > 1$}
				\State \textbf{Calculate the damping parameter:} Determine the damping parameter $\sigma_j^l$ by \eqref{para:damp2}-\eqref{para:damp21}.
				\State \textbf{Obtain the limiting solution:} Compute the limiting solution $u_{h}^{lim}$ by \eqref{timespli:1dscalar}.
				\State \textbf{Compute the hybrid solution:} Calculate the final hybrid solution as $u_{h}^{hyd} = \omega_j u_{h}^{DG} + (1 - \omega_j) u_{h}^{lim}$, where the blending factor $\omega_j$ is given by $\omega_j = \frac{\max_j \eta_j - \eta_j}{\max_j \eta_j - 1}$.
				\Else
				\State \textbf{No limiting strategies are needed:}
				$u_{h}^{hyd}=u_{h}^{DG}$.
				\EndIf
			\end{algorithmic}
		\end{algorithm}

		Numerical experiments will demonstrate that employing this hybrid approach, with the lower-order polynomials computed using the jump filter, yields results comparable to those obtained by using the TVB limiter for the lower-order polynomials. However, the key advantage of the jump filter lies in its avoidance of characteristic decomposition, resulting in relatively lower computational costs.

		In the following tests, we employ the local Lax-Friedrichs (LLF) numerical flux. However, it's worth noting that other numerical fluxes, such as the HLL (Harten-Lax-van Leer) \cite{harten1983upstream} and HLLC (Harten-Lax-van Leer-Contact) numerical fluxes \cite{toro2013riemann}, can also be effectively employed in this context. For time discretization, we utilize the third-order explicit strong-stability-preserving Runge-Kutta (SSP-RK) method \cite{Gottlieb2001Strong}. Furthermore, our jump filter or hybrid limiter, in conjunction with the jump filter, is applied in a post-processing step after each Runge-Kutta substage.	
		\subsection{One dimensional tests}\label{sub:1dexamples}  
		\begin{example}[Burgers equation]\label{exm:burgers}
			Consider the inviscid Burgers equation
			\begin{align}
				u_t + (\frac{u^2}{2})_x = 0, \; x\in (-\pi,\pi),
			\end{align}
			with the initial condition $u(x,0) = \frac{1}{2}  +  \sin(x)$. Table \ref{tab:Burgers_accuracy} presents the errors and orders of numerical solutions with and without the jump filter, at time $T=0.5$ when the solution is still smooth. It is evident that the DG scheme equipped with the jump filter achieves the desired accuracy. Note that the numerical results were obtained using the jump filter without employing the hybrid procedure. Moreover, as the mesh is refined, the error between the scheme with and without the jump filter diminishes significantly.
			
			Additionally, in Fig. \ref{fig:Burgers1d}, numerical solutions and errors after shock formation at time $T=1.5$ are depicted. Pointwise errors reveal controlled spurious oscillations around the shock, with a very small pollution region. Furthermore, both the cell average and the entire polynomial of numerical solutions remain oscillation-free.
			
			Throughout all cases illustrated in Fig. \ref{fig:Burgers1d}, the local jump viscosity coefficient correlates closely with the numerical errors. This suggests that it can serve as a reliable shock indicator, facilitating adaptive filtering.
		\end{example}	
		\begin{table}[htb]\small
			\caption{\label{tab:Burgers_accuracy}Numerical errors and orders
				in Example \ref{exm:burgers}. } \centering
			\resizebox{1.\textwidth}{!}{
				\medskip
				\begin{tabular}{|c|r||cc|cc|cc|cc|}  \hline
					&  &\multicolumn{4}{c|}{results with filter} & \multicolumn{4}{c|}{results without filter} \\ \hline
					& $N$ & $L^2$ error & order & $L^\infty$ error & order & $L^2$ error & order & $L^\infty$ error & order \\ \hline \hline
					\multirow{6}{0.6cm}{$P^1$}
					&   20    & 2.60e-02    &   --    & 3.74e-02    &   --    &  1.01e-02    &   --& 9.92e-03    &   --    \\
					&   40    & 3.96e-03    &   2.71  &5.66e-03    &2.72    &2.85e-03 &1.83 &3.00e-03 &1.72  \\
					&   80   &7.52e-04  &2.40 &9.76e-04 &2.54    &7.73e-04 &1.88 &7.94e-04 &1.92 \\
					&   160  &1.90e-04 &1.98 &2.02e-04 &2.27   &2.04e-04 &1.92 &2.03e-04 &1.96  \\
					&  320  &5.07e-05 &1.91 &4.75e-05 &2.09    &5.29e-05 &1.95 &5.13e-05 &1.99 \\
					&  640   &1.32e-05 &1.94 &1.23e-05 &1.94   &1.35e-05 &1.97 &1.29e-05 &1.99 \\\hline
					\multirow{6}{0.6cm}{$P^2$}
					&   20    &1.10e-01    &   --    &2.37e-01    &   --    &6.70e-04    &   -- &1.39e-03    &   -- \\
					&   40   &1.41e-02 &2.97 &3.32e-02 &2.83  &9.42e-05 &2.83 &2.13e-04 &2.70\\
					&   80   &6.03e-05 &7.87 &1.50e-04 &7.79    &  2.13E-05    &   3.21  &  6.54E-05    &   3.09  \\
					&   160    &2.94e-06 &4.36 &6.75e-06 &4.47   &1.81e-06 &2.89 &3.62e-06 &2.92 \\
					&  320    &2.84e-07 &3.37 &5.66e-07 &3.58&2.37e-07 &2.94 &4.83e-07 &2.91 \\
					&  640   &3.26e-08 &3.12 &6.52e-08 &3.12  &3.02e-08 &2.97 &6.20e-08 &2.96\\ \hline
					\multirow{6}{0.6cm}{$P^3$}
					&   20    & 8.70e-03    &  --   & 1.90e-02    &   --    &  3.76e-05    &   --& 6.42e-05    &   -- \\
					&   40   &1.02e-04 &6.42 &3.65e-04 &5.71    &3.88e-06 &3.27 &9.45e-06 &2.76 \\
					&   80   &1.50e-06 &6.08 &5.34e-06 &6.09      &2.87e-07 &3.76 &5.86e-07 &4.01\\
					&   160   &4.15e-08 &5.18 &1.14e-07 &5.55   &1.99e-08 &3.85 &3.85e-08 &3.93\\
					&  320   &1.70e-09 &4.61 &4.46e-09 &4.67    &1.33e-09 &3.90 &2.46e-09 &3.97  \\
					&  640  &9.22e-11 &4.20 &1.94e-10 &4.53  &8.69e-11 &3.94 &1.55e-10 &3.99 \\ \hline
			\end{tabular}}
		\end{table}
		\begin{figure}[ht]
			\centering
			\begin{minipage}[b]{0.3\linewidth}
				\centering
				\includegraphics[width=\linewidth]{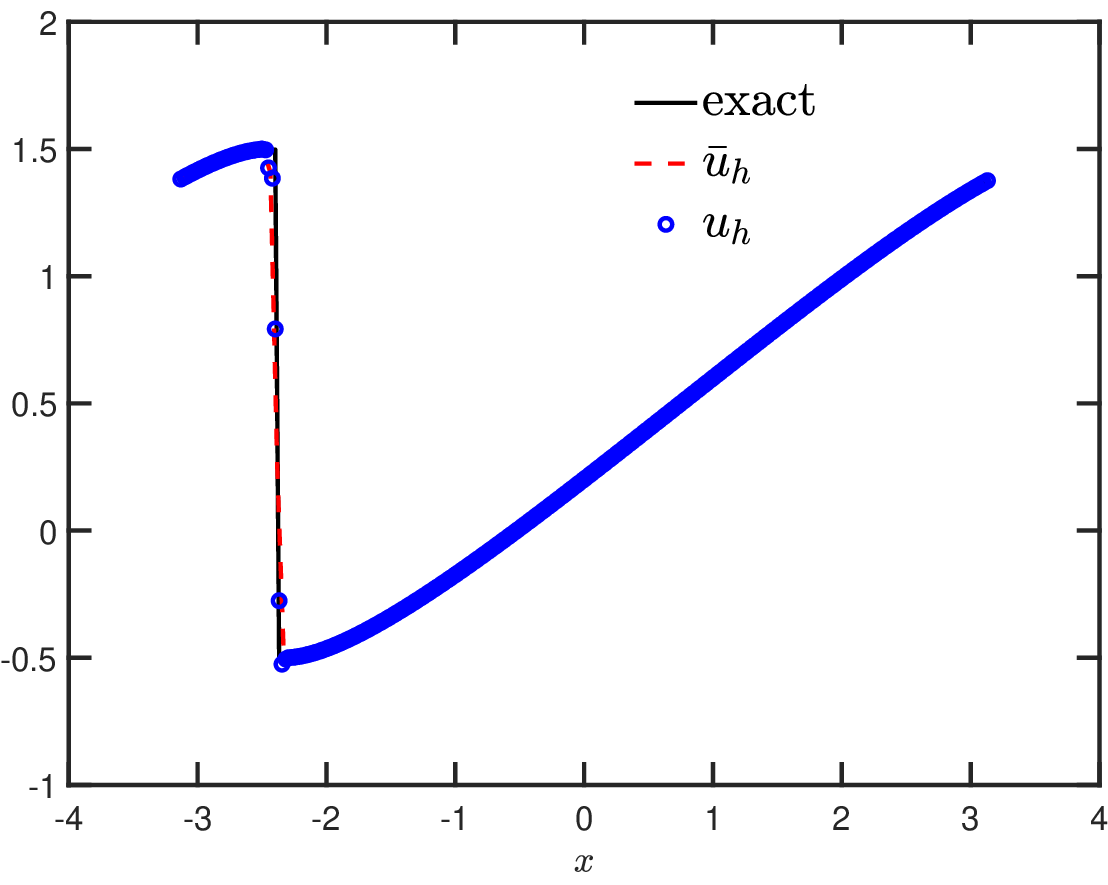}
			\end{minipage}
			\hfill
			\begin{minipage}[b]{0.3\linewidth}
				\centering
				\includegraphics[width=\linewidth]{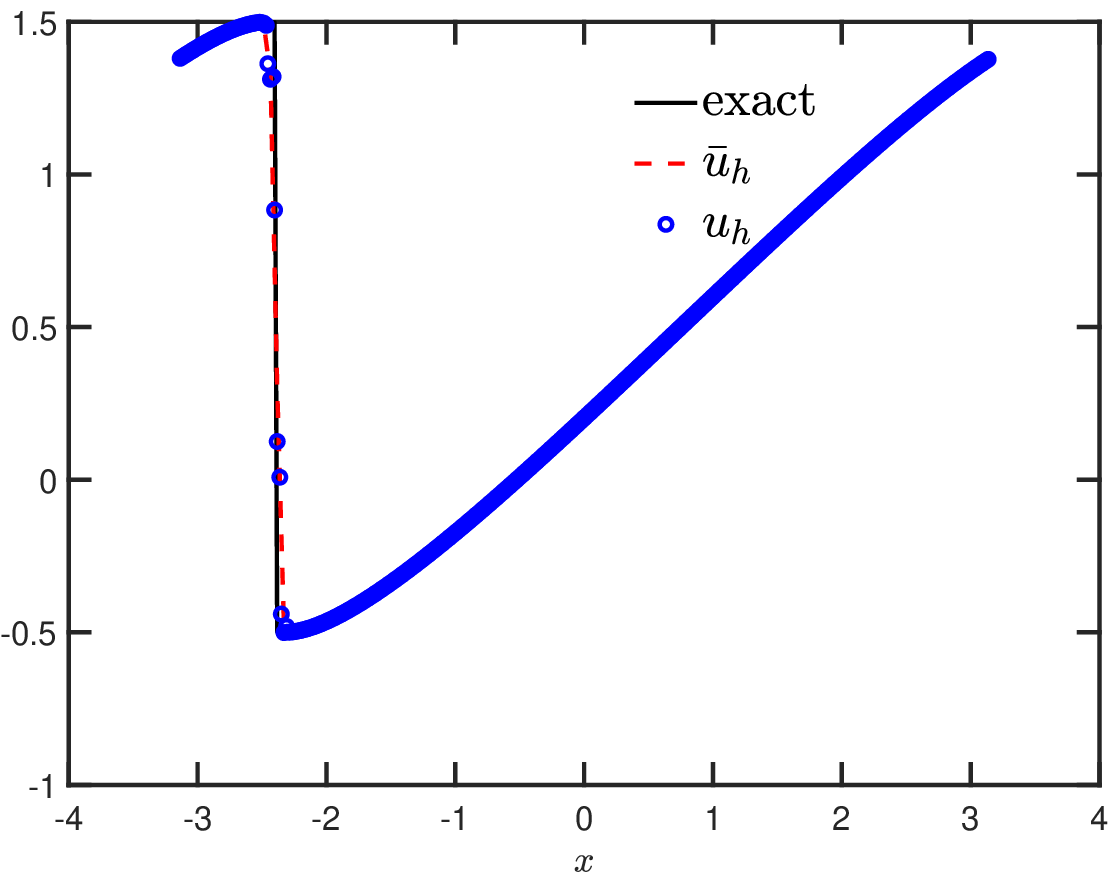}
			\end{minipage}
			\hfill
			\begin{minipage}[b]{0.3\linewidth}
				\centering
				\includegraphics[width=\linewidth]{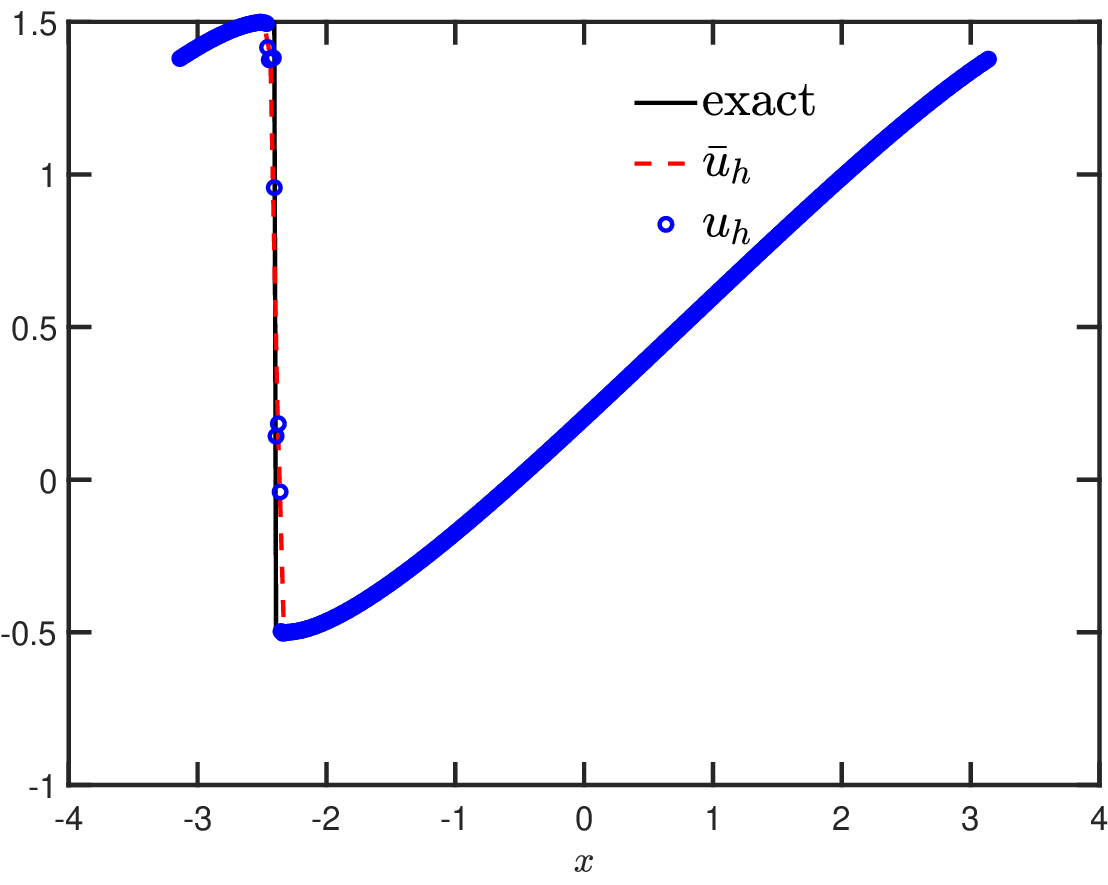}
			\end{minipage}
			\hfill
			\begin{minipage}[b]{0.3\linewidth}
				\centering
				\includegraphics[width=\linewidth]{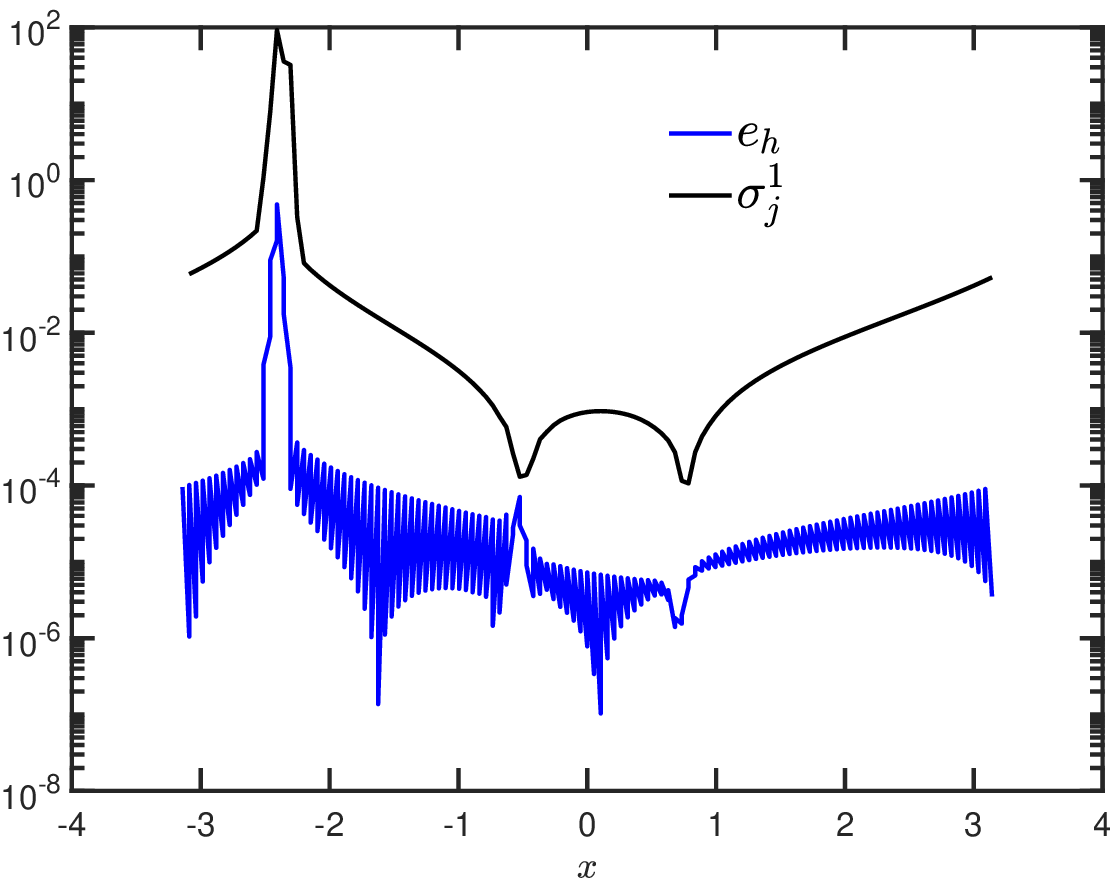}
			\end{minipage}
			\hfill
			\begin{minipage}[b]{0.3\linewidth}
				\centering
				\includegraphics[width=\linewidth]{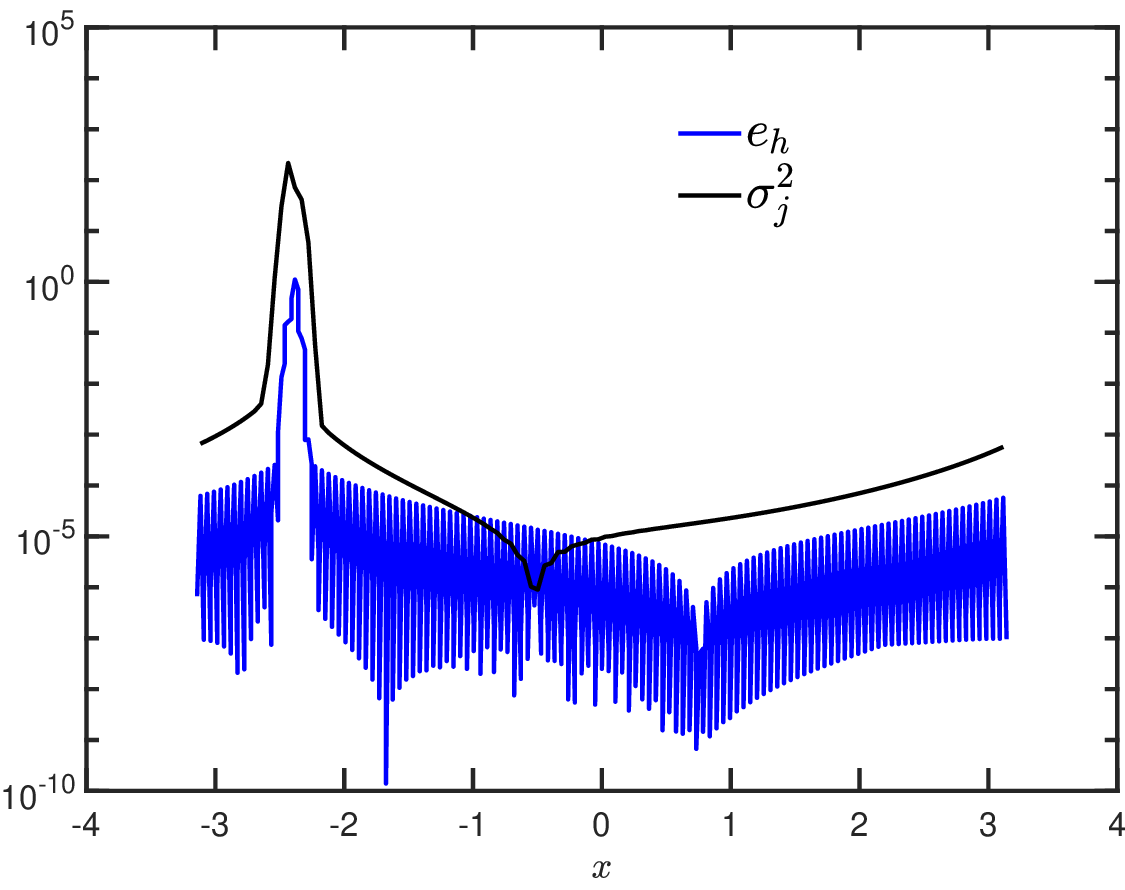}
			\end{minipage}
			\hfill
			\begin{minipage}[b]{0.3\linewidth}
				\centering
				\includegraphics[width=\linewidth]{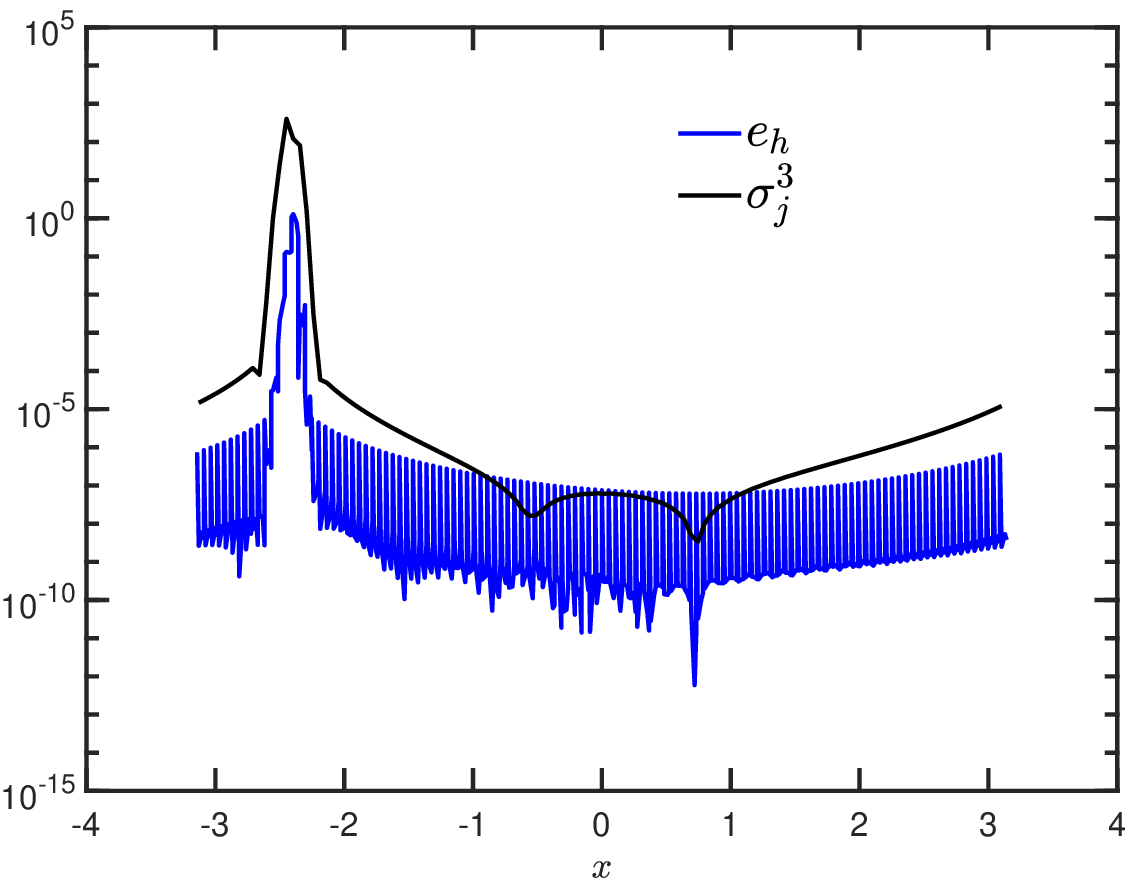}
			\end{minipage}
			\caption{Numerical results of DG scheme with the jump filter for Example \ref{exm:burgers}, with $N=120$ and $P^k$, $k=1,2,3$,  at time $T=1.5$.
				Top: numerical solutions of full polynomials and cell averages. Bottom: pointwise errors and $\sigma_{j,l}$ where $l=k$.}
			\label{fig:Burgers1d}
		\end{figure}
		
		Next, we delve into the Euler equations:
		\begin{equation}	
			\boldsymbol{U}_t +  \boldsymbol{F}(\boldsymbol{U})_x = 0,
		\end{equation}
		with $	\boldsymbol{U} = (\rho , m ,E )^T$, $\boldsymbol{F}(\boldsymbol{U}) = (m ,\frac{m^2}{\rho} + p, \frac{m}{\rho}(E + p))^T$,
		where $m = \rho u$, $E = \frac{1}{2} \rho u^2 + \rho e$, and $p = (\gamma - 1) \rho e$. Here, $\rho$ denotes density, and $u$ and $m$ represent velocity and momentum, respectively. $E$ stands for the total energy, $p$ denotes pressure, and $e$ signifies internal energy. $\gamma$ represents the ratio of specific heats, and $H = (E+p)/\rho$ is the enthalpy. For the Euler equations in 1D, we set $c_f^l = \frac{\beta_j}{4l(l+1)} \frac{1}{\bar{H}_j}$ in \eqref{para:damp21}, where $\bar{H}_j$ represents the average enthalpy within cell $K_j$. Notably, in Euler equations, we observed that the factor $\frac{1}{\bar{H}_j}$ enhances the scheme's capability to capture strong shock waves. In this section, we present plots of the full polynomials of DG solutions.
		\begin{example}[Lax problem]\label{ex:Lax}
			We consider the Lax problem for the Euler equations with the initial condition
			\begin{equation}
				\left(\rho,u,p,\gamma\right)^T=
				\begin{cases}
					\left(0.445,0.698,3.528,1.4\right)^T,  & x\in\left[-1,0\right) \\
					\left(0.5,0,0.571,1.4\right)^T,   & x\in\left[0,1\right].
				\end{cases}
			\end{equation}
			The density $\rho$ calculated with the DG scheme with the jump filter and the associated hybrid limiter, is plotted against the exact solution in Fig. \ref{fig:LaxBlast}. It is evident that the DG scheme with the jump filter maintains sharp, oscillation-free resolution near shocks. Moreover, employing the hybrid limiter associated with the jump filter as the lower-order limiting strategy for this problem also yields oscillation-free results. In particular, this hybrid approach can further reduce computational costs, as discussed in \cite{MR4673601}.
		\end{example}
		\begin{example}[Blast waves]  \label{ex:Blast}
			We then consider the interaction of two blast waves with the initial condition:
			\begin{equation}
				\left(\rho,u,p,\gamma\right)^T=
				\begin{cases}
					\left(1,0,10^3,1.4\right)^T,  & x\in\left[0,0.1\right),\\
					\left(1,0,10^{-2},1.4\right)^T,   & x\in\left[0.1,0.9\right],\\
					\left(1,0,10^2,1.4\right)^T,   & x\in\left[0.9,1\right].
				\end{cases}
			\end{equation}
			Reflecting boundary conditions are applied at both endpoints, and the numerical solutions are depicted in Fig. \ref{fig:LaxBlast}. Here, the ``exact" solution refers to the numerical solution computed by the fifth-order finite difference WENOZ-H scheme \cite{wan2021new} with $20000$ grid points. Initially, it's evident that the DG scheme with the jump filter maintains sharp, oscillation-free resolution near shocks. To further minimize numerical dissipation, we employ the hybrid limiter associated with the jump filter for this problem. Comparing the results, we observe that the hybrid limiter exhibits superior resolution compared to the DG scheme with the jump filter alone.
			\begin{figure}[ht]
				\centering
				\begin{minipage}[b]{0.4\linewidth}
					\centering
					\includegraphics[width=0.8\linewidth]{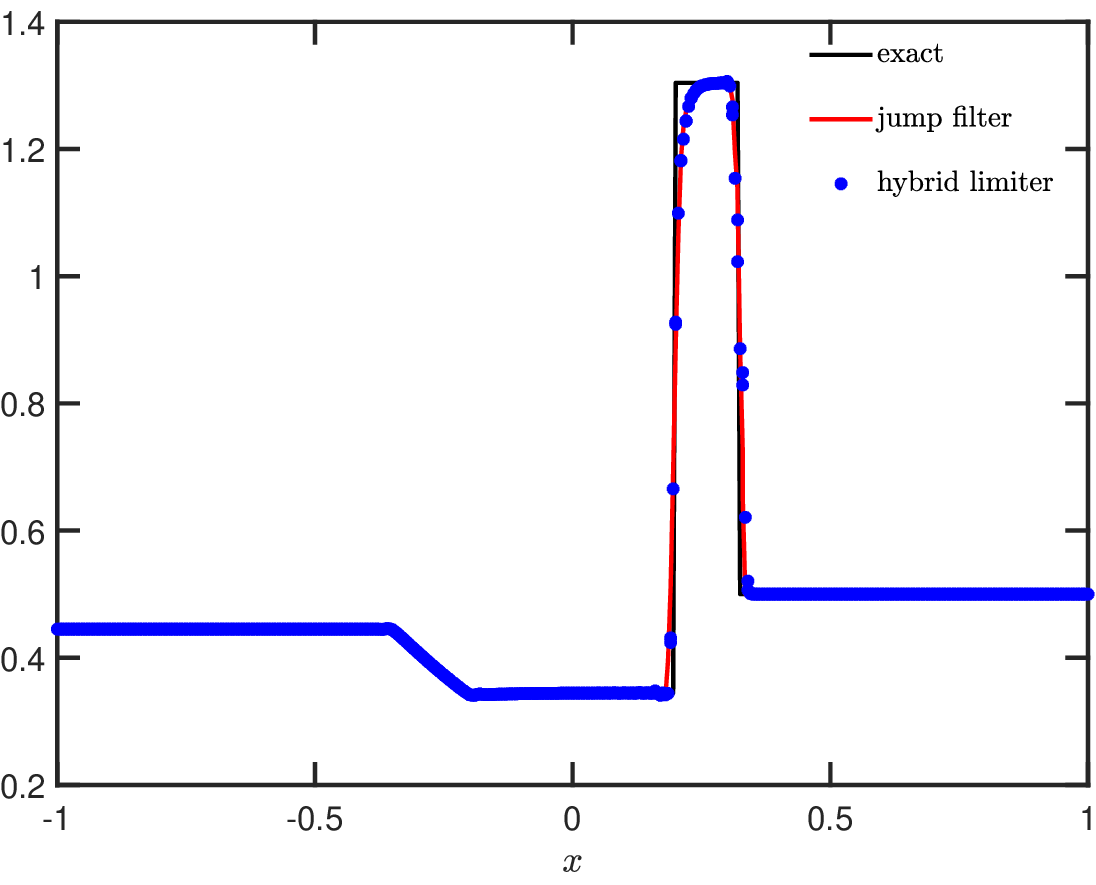}
				\end{minipage}
				\begin{minipage}[b]{0.4\linewidth}
					\centering
					\includegraphics[width=0.8\linewidth]{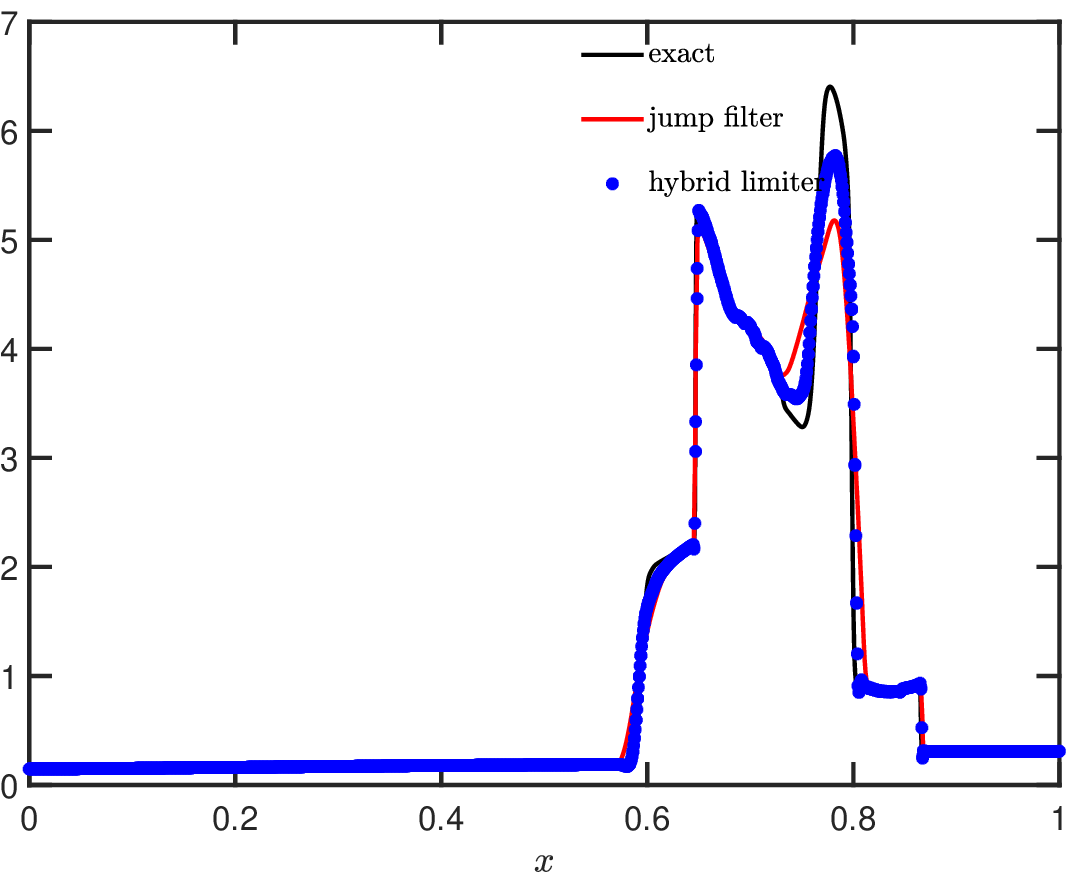}
				\end{minipage}
				\caption{Numerical results of DG scheme with the jump filter and hybrid limiters. Left: Example \ref{ex:Lax} , with $N=200$ and $P^2$,  at time $T= 0.13$. Right: Example \ref{ex:Blast} , with $N=600$ and $P^2$,  at time $T= 0.038$.}
				\label{fig:LaxBlast}
			\end{figure}
		\end{example}
		
		\begin{example}[Shu-Osher shock tube problem]\label{ex:ShuOsher}
			Next, we consider the shock density wave interaction problem with a moving Mach 3 shock interacting with sine waves in the density:
			\begin{equation}
				\left(\rho,u,p,\gamma\right)^T=
				\begin{cases}
					\left(3.857143,2.629369,10.333333,1.4\right)^T,  & x\in\left[-5,-4\right) \\
					\left(1+0.2\sin(5x),0,1,1.4\right)^T,   & x\in\left[-4,5\right].
				\end{cases}
			\end{equation}
			Given that this solution exhibits both shocks and a complex smooth region structure, a higher-order scheme is expected to showcase its advantages. For reference, the ``exact" solution is computed using the fifth-order finite difference WENO scheme \cite{jiang1996efficient} with 2000 grid points. In Fig. \ref{fig:ShuOsher}, the density is plotted using the DG method with the jump filter and the hybrid limiter associated with it. Both schemes effectively capture high-frequency waves, and the hybrid limiter demonstrates its ability to efficiently reduce numerical dissipation. Additionally, higher-order DG methods with the jump filter or the hybrid limiter outperform their lower-order counterparts.
			\begin{figure}[ht]
				\centering
				\begin{minipage}[b]{0.24\linewidth}
					\centering
					\includegraphics[width=0.9\linewidth]{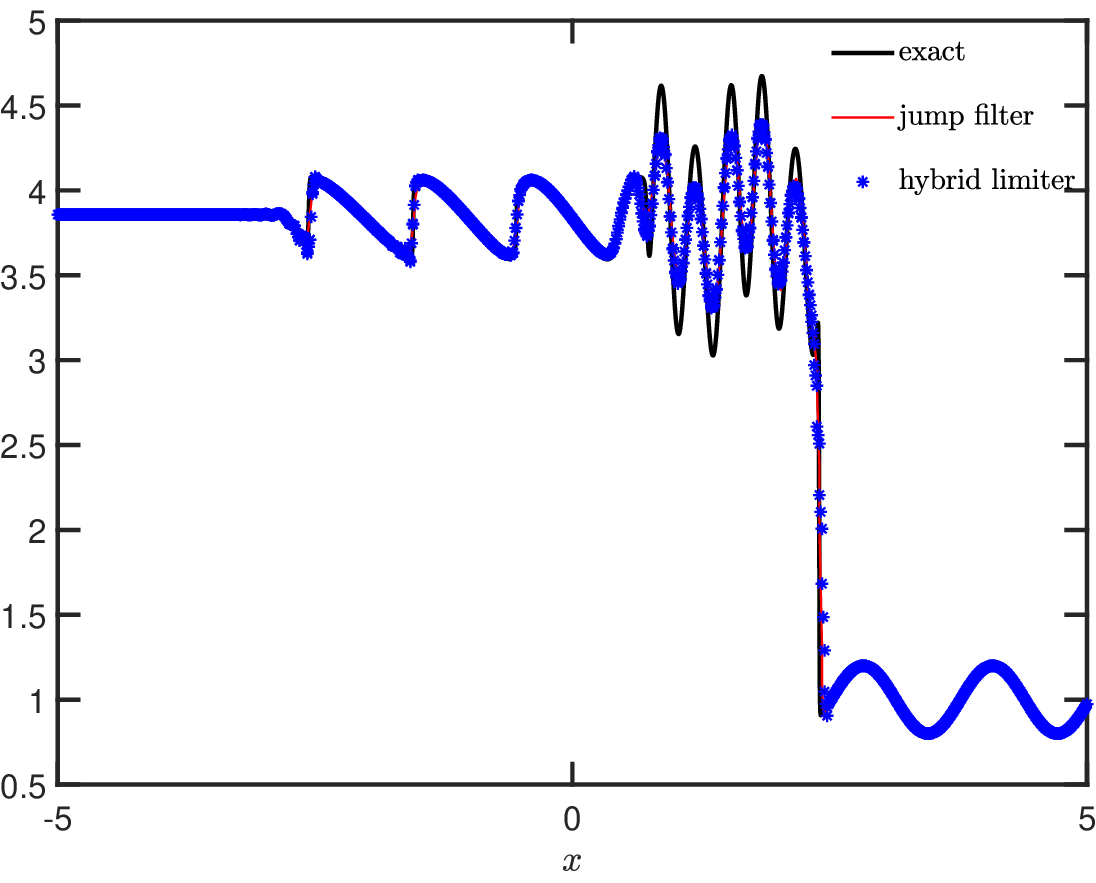}
				\end{minipage}
				\hfill
				\begin{minipage}[b]{0.24\linewidth}
					\centering
					\includegraphics[width=0.9\linewidth]{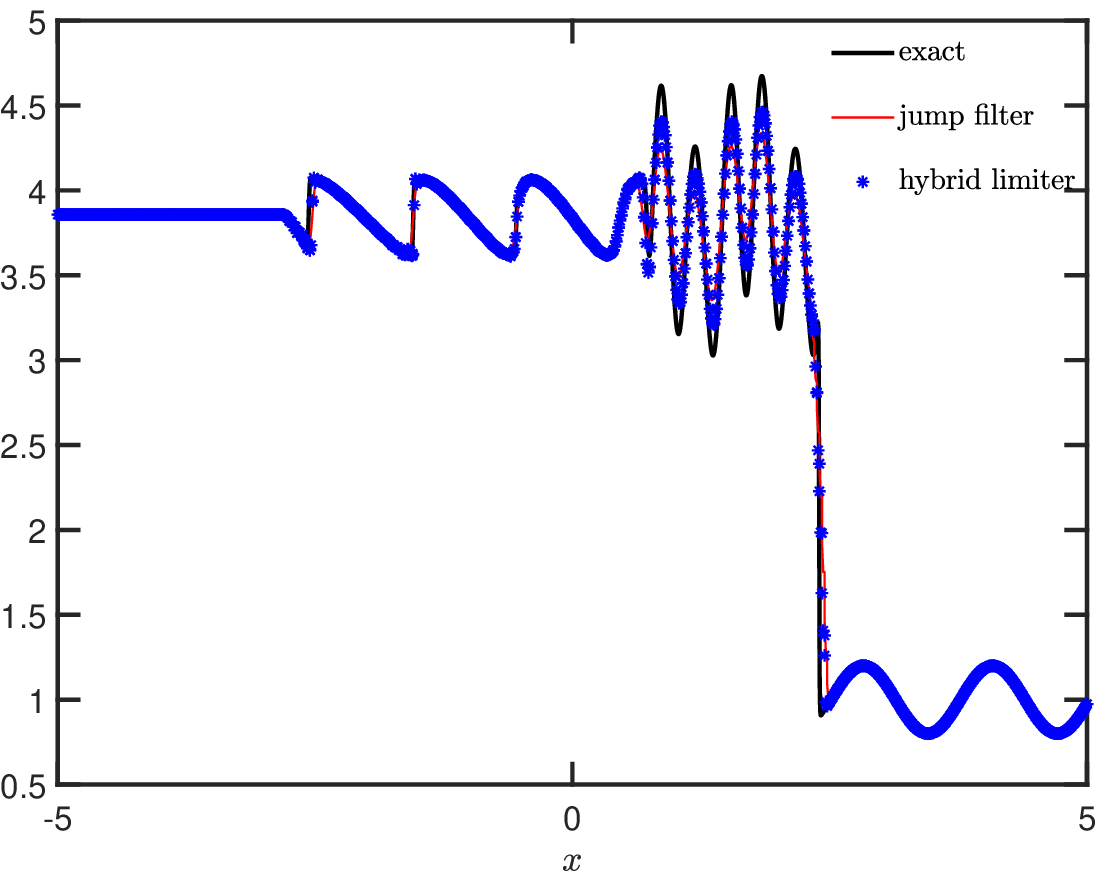}
				\end{minipage}
				\hfill
				\begin{minipage}[b]{0.24\linewidth}
					\centering
					\includegraphics[width=0.9\linewidth]{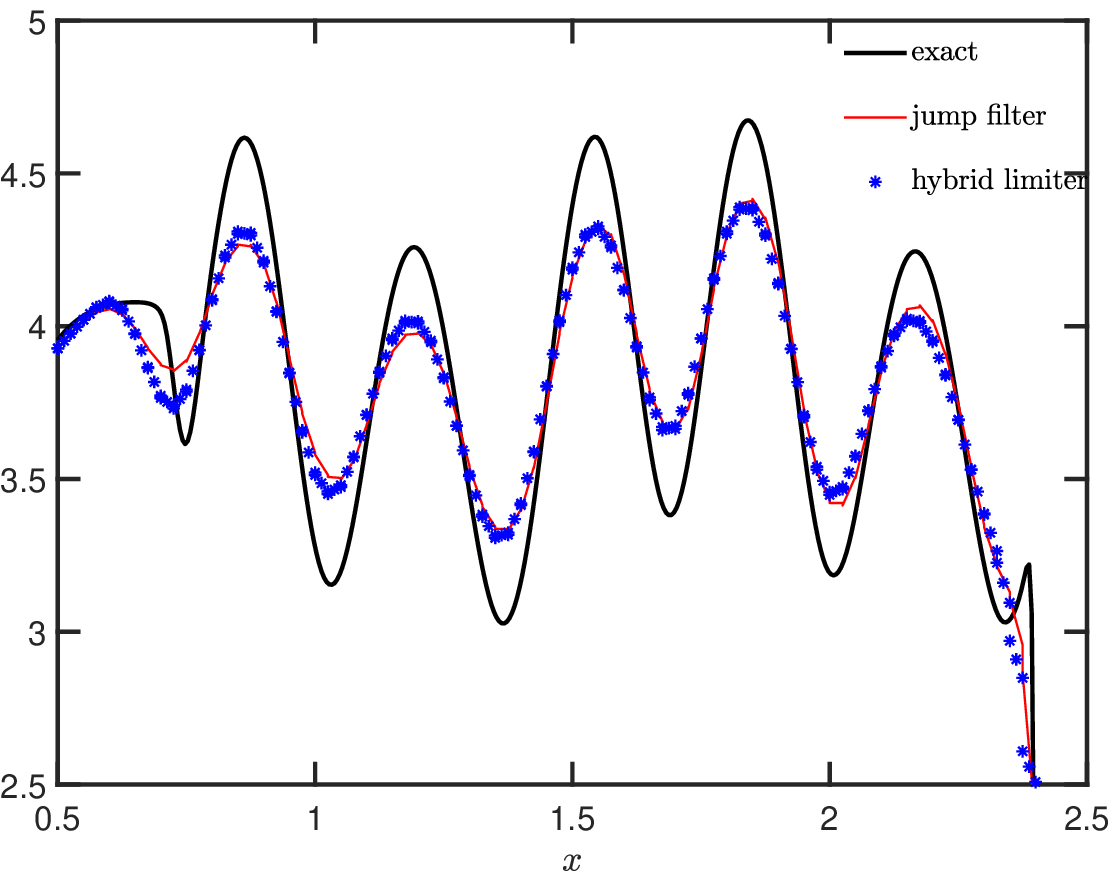}
				\end{minipage}
				\hfill
				\begin{minipage}[b]{0.24\linewidth}
					\centering
					\includegraphics[width=0.9\linewidth]{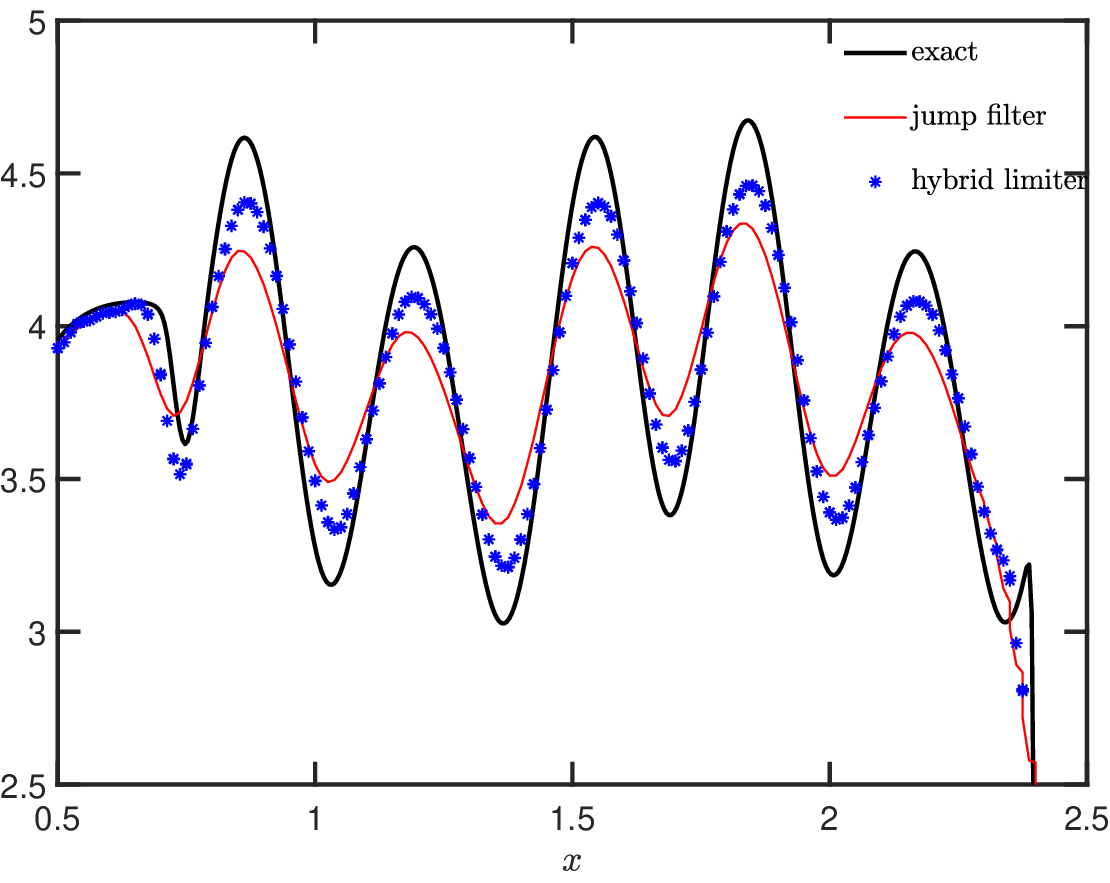}
				\end{minipage}
				\caption{Numerical results of DG scheme with the jump filter and hybrid limiters for Example \ref{ex:ShuOsher}  with $N=400$ at time $T= 1.8$. From left to right: $P^1$; $P^2$; $P^1$ (Zoomed-in); $P^2$ (Zoomed-in).}
				\label{fig:ShuOsher}
			\end{figure}	
		\end{example}
		\subsection{Two dimensional tests}\label{sub:2dexamples}
		The 2D Euler equation can be written as:
		\begin{equation}
			\boldsymbol{U}_t + \nabla \cdot 	\boldsymbol{F}(\boldsymbol{U}) = 0, \mbox{ in } \Omega \subset \mathbb{R}^2,
		\end{equation}	
		with $\boldsymbol{U} = (\rho,\boldsymbol{m},E)^T$, $\boldsymbol{F}(	\boldsymbol{U}) = (\boldsymbol{m}^T , \rho 	\boldsymbol{u} \boldsymbol{u}^T + p	\boldsymbol{I} , (E + p) \	\boldsymbol{u}^T )^T$,
		where $\boldsymbol{m} = \rho \boldsymbol{u}$, $E = \frac{1}{2} \rho \boldsymbol{u}^2 + \rho e$, and $p = (\gamma - 1) \rho e$. Here, $\rho$ represents density, and $\boldsymbol{u}=(u,v)^T$ and $	\boldsymbol{m}$ are velocities and momenta. $E$ is the total energy, $p(\boldsymbol{U})$ is pressure, and $e$ is the internal energy. And $\gamma$ is the ratio of specific heats. Unless otherwise specified, we always take $\gamma=1.4$.  For Euler equations in 2D, the damping term \eqref{para:damp22d} serves to determine the magnitude of the viscosity term with $(c_{f,x}^{p,q},c_{f,y}^{p,q}) = \frac{1}{4\lambda_{p,q}} \frac{1}{\bar{H_j}}( h_{y_j}\beta_{i,j}^x, h_{x_i}\beta_{i,j}^y)$ in \eqref{para:damp22d1}, where $\beta_{i,j}^x$ and $\beta_{i,j}^y$ are the estimates of the local maximum wave speeds in the $x$- and $y$-directions, respectively.
		
		\begin{example}[Vortex evolution problems]\label{ex:2Dvortex}
			We now consider the problem of vortex evolution. An isentropic vortex perturbation centered at $(x_c, y_c)$ is added to the velocity $(u, v)$, temperature ($T = p/\rho$), and entropy ($S = \ln\left(p/\rho^ \gamma\right)$) of the flow, given
			in the following:
			\[
			(\delta u, \delta v) = \frac{\epsilon}{r_c}e^{\alpha(1-r^2)}(\overline{y}, -\overline{x}), \quad \delta T = -\frac{( \gamma - 1)\epsilon^2}{4\alpha\gamma}e^{2\alpha(1-\tau^2)}, \quad \delta S = 0  ,
			\]
			where $(\overline{x}, \overline{y}) = (x - x_c, y - y_c)$, $r^2 = \overline{x}^2 + \overline{y}^2$, and the $\tau=r/r_c$. Here $\epsilon$ is the strength of the vortex, $\alpha$ controls the decay rate of the vortex, and $r_c$ is the critical radius at which the vortex has the maximum strength. We test the following two cases:
			\begin{itemize}
				\item[(a)] (accuracy test) We consider an isentropic
				vortex test case with an exact solution given by $ u = 1 - \beta e^{(1 - r^2)} \frac{y - y_0}{2\pi}$, $v = \beta e^{(1 - r^2)} \frac{x - x_0}{2\pi}$, $\rho  = \left( 1 - \left( \frac{\gamma - 1}{16\gamma\pi^2} \right) \beta^2 e^{2(1 - r^2)} \right)^{\frac{1}{\gamma - 1}}$, $p = \rho^\gamma$, where $r  = \sqrt{(x - T - x_0)^2 + (y - y_0)^2}$,
				$x_0  = 5$, $y_0 = 0$, $\beta = 5$, $\gamma = 1.4$. We compute the numerical solution at time $T = 10$ in the region $[0, 10]\times[-5,5]$ under periodic boundary conditions. Table \ref{tab:2Dvortex} presents the accuracy and convergence rates. Note that the numerical results were obtained using the jump filter without employing the hybrid procedure. It is observed that the DG scheme with the jump filter achieves the expected convergence rate, surpassing $k+1$. This deviation is attributed to the dominance of the viscous term on coarse meshes. However, with mesh refinement, the convergence rate rapidly approaches the theoretically expected $k+1$ order of accuracy. Moreover, comparison with the standard DG scheme reveals negligible differences in errors on fine meshes, suggesting a minimal impact of our jump filter as expected. As the mesh is refined, the jumps between adjacent cells diminish significantly.	
				\item[(b)] (shock-vortex interaction) The model problem characterizes the mutual interference effects of a stationary shock wave and a moving vortex. The computational domain is given by $[0, 2] \times [0, 1]$. A stationary shock wave is located at $x = 0.5$ and is perpendicular to the $x$-axis. The left state of the shock wave is $(\rho, u, v, p)^T = (1, 1.1\sqrt{\gamma}, 0, 1)^T$. A small vortex is imposed on the fluid to the left of the shock wave, with its center at $(x_c, y_c) = (0.25, 0.5)$. We take the same values of these parameters as in \cite{jiang1996efficient} that $\epsilon = 0.3$, $r_c = 0.05$, and $\alpha = 0.204$. The upper and lower boundaries are subjected to reflective boundary conditions. Fig. \ref{fig:shockvortexp3} displays contour plots of pressure at six distinct times, illustrating the shock-vortex interaction dynamics. Employing DG schemes with the jump filter effectively captures these dynamics and accurately depicts intricate wave structures. The obtained results are comparable to those reported in \cite{jiang1996efficient}.
			\end{itemize}

			\begin{table}[htb]\small
				\caption{\label{tab:2Dvortex} Numerical errors and orders
					in Example \ref{ex:2Dvortex}. } \centering
				\resizebox{1.\textwidth}{!}{
					\medskip
					\begin{tabular}{|c|r||cc|cc|cc|cc|}  \hline
						&  &\multicolumn{4}{c|}{results with filter} & \multicolumn{4}{c|}{results without filter} \\ \hline
						& $N_x\times N_y$ & $L^2$ error & order & $L^\infty$ error & order & $L^2$ error & order & $L^\infty$ error & order \\ \hline \hline
						\multirow{6}{0.6cm}{$P^1$}
						&  $ 80\times 80$     & 6.26e-03    &   --    &6.79e-02    &   --    &  1.30e-03    &   --& 1.60e-02    &   --    \\
						&   $ 160\times 160$    & 5.52e-04    &  3.50  &4.97e-03    &3.77    &1.93e-04 &2.75 &2.25e-03 &2.83  \\
						&  $ 320\times 320$     &4.70e-05  &3.55 &4.90e-04 &3.33    &2.84e-05 &2.76 &3.74e-04 &2.58 \\
						&  $ 640\times 640$    &5.70e-06 &3.04 &7.48e-05 &2.71   &4.96e-06 &2.52 &8.37e-05 &2.16  \\
						&  $ 1280\times 1280$   &1.08e-06 &2.39 &1.85e-05 &2.01    &1.06e-06 &2.22 &1.97e-05 &2.08 \\\hline
						\multirow{6}{0.6cm}{$P^2$}
						&  $ 80\times 80$    &1.23e-03    &   --    & 1.32e-02    &   --    &2.06e-05    &   -- & 5.76e-04    &   -- \\
						&  $ 160\times 160$  &3.13e-05 &5.29 &3.55e-04 &5.21
						
						&2.77e-06 &2.89 &7.24e-05 &2.99\\
						
						&   $ 320\times 320$   & 1.01e-06 &4.94 & 1.49e-05 &4.57  &
						
						3.88e-07    &  2.83  &  9.64e-06    &   2.90  \\
						
						&  $ 640\times 640$    &6.21e-08 &4.03 &1.21e-06 &3.61
						
						&5.36e-08 &2.85 &1.33e-06 &2.85 \\
						
						& $ 1280\times 1280$   &7.46e-09 &3.05 &1.92e-07 &2.70
						&7.37e-09 &2.86 &1.97e-07 &2.75 \\ \hline
						\multirow{6}{0.6cm}{$P^3$}
						
						&  $ 80\times 80$  &  1.65e-03   &  --   & 1.83e-02    &   --    &
						
						6.47e-07    &   --&  1.38e-05    &   -- \\
						
						&  $ 160\times 160$  &1.72e-06 &9.90 &3.48e-05 &9.04
						
						&3.33e-08 &4.28 &9.25e-07 &3.90 \\

						&  $ 320\times 320$   &2.13e-08 &6.33 &3.42e-07 &6.66
						
						&1.86e-09 &4.16 &5.45e-08 &4.08\\

						&  $ 640\times 640$    &3.49e-10 &5.93 &7.18e-09 &5.57
						
						&1.10e-10 &4.08 &3.18e-09 &4.10\\

						&  $ 1280\times 1280$   & 8.66e-12 &5.33 &2.37e-10 &4.92
						
						&6.82e-12 &4.01  &1.85e-10 &4.10  \\ \hline
				\end{tabular}}
			\end{table}
		\end{example}
		\begin{figure}[ht]
			\centering
			\begin{minipage}[b]{0.3\linewidth}
				\centering
				\includegraphics[width=\linewidth]{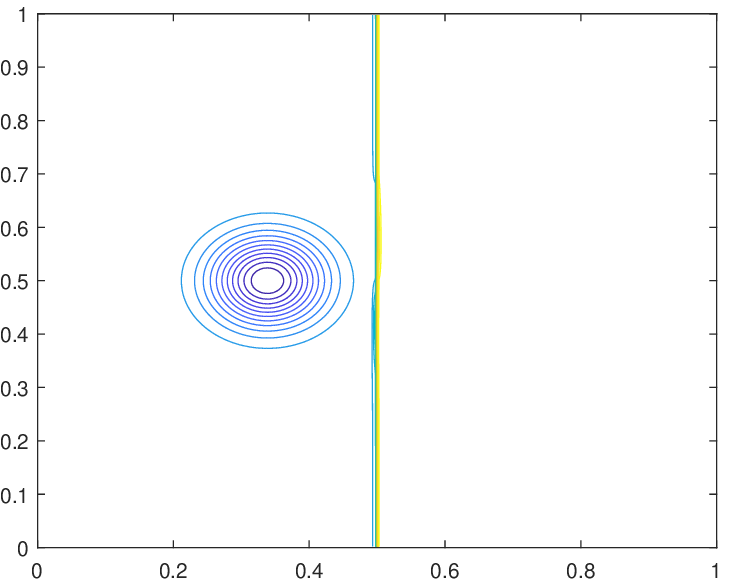}
			\end{minipage}
			\begin{minipage}[b]{0.3\linewidth}
				\centering
				\includegraphics[width=\linewidth]{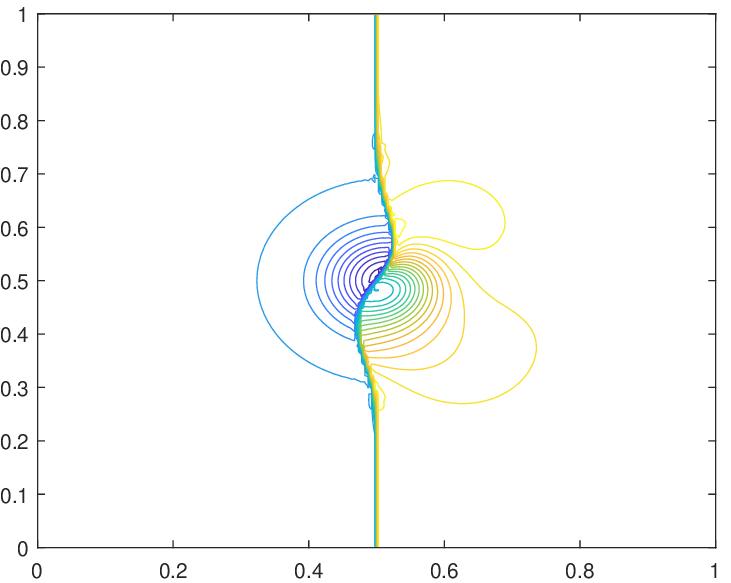}
			\end{minipage}
			\begin{minipage}[b]{0.3\linewidth}
				\centering
				\includegraphics[width=\linewidth]{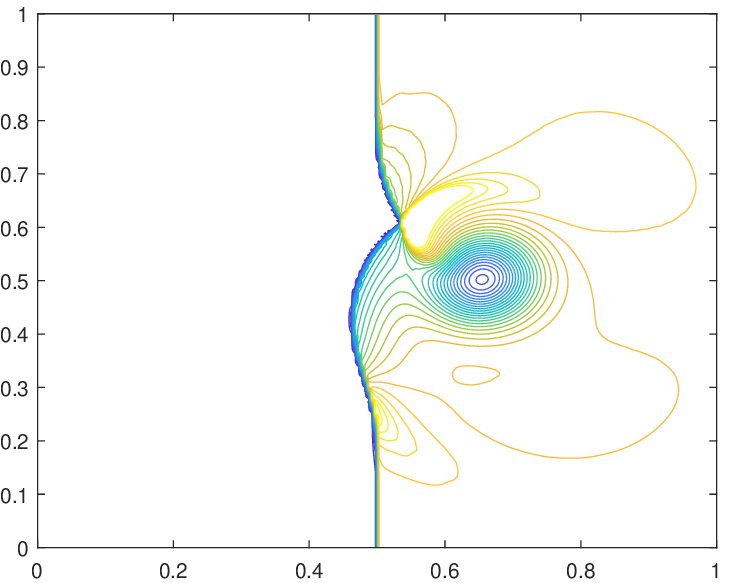}
			\end{minipage}
			\begin{minipage}[b]{0.3\linewidth}
				\centering
				\includegraphics[width=\linewidth]{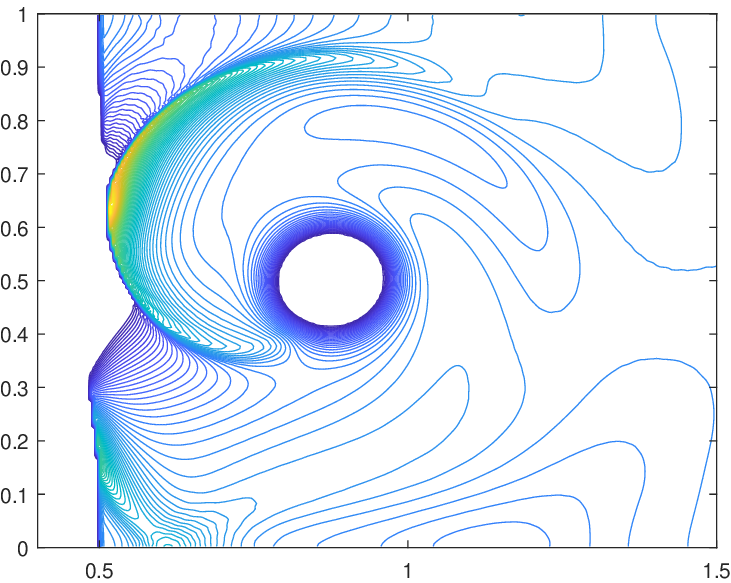}
			\end{minipage}
			\begin{minipage}[b]{0.3\linewidth}
				\centering
				\includegraphics[width=\linewidth]{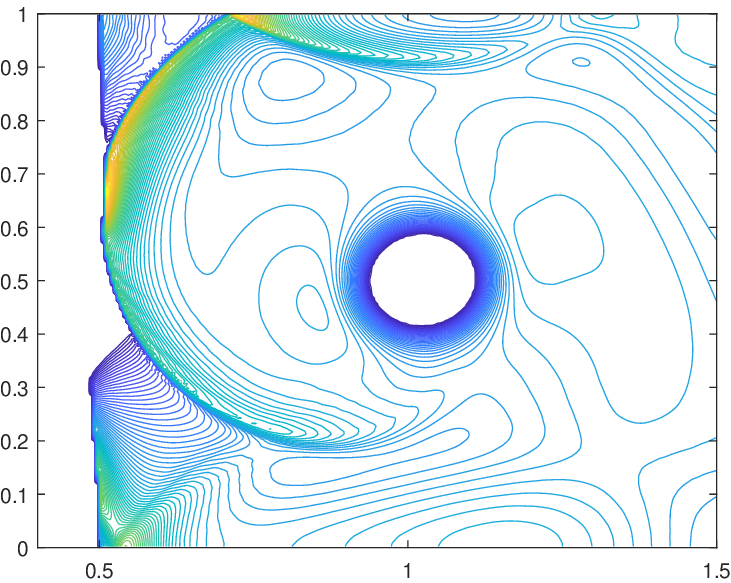}
			\end{minipage}
			\begin{minipage}[b]{0.3\linewidth}
				\centering
				\includegraphics[width=\linewidth]{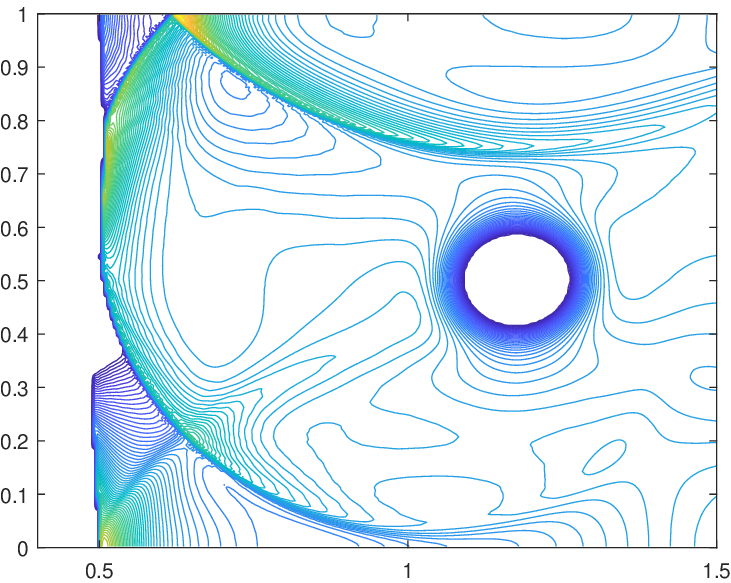}
			\end{minipage}
			\caption{Numerical results of DG scheme with the jump filter for Example \ref{ex:2Dvortex} case (b). From left to right, and top to bottom: $T = 0.068$; $T = 0.203$; $T = 0.330$; $T = 0.529$; $T = 0.662$; $T = 0.8$.  30 equally spaced pressure contours from 0.68 to 1.30 at $T = 0.068$ and $T = 0.203$; $T = 0.330$. 90 equally spaced pressure contours from 1.19 to 1.37 at $T = 0.529$; $T = 0.662$; and $T = 0.8$. $P^3$ basis. $400\times 200$ cells.}
			\label{fig:shockvortexp3}
		\end{figure}

		\begin{example}[Double Mach reflection problem]\label{ex:Double_rec}
			This is a benchmark test for the two-dimensional Euler equations for high-resolution schemes. The computation domain is $\left[0,4\right]$ $\times$ $\left[0,1\right]$. A right-moving Mach $10$ shock is located at $x=1/6$, $y=0$, making a $60^{\circ}$ angle with the horizontal wall. At the top boundary, the flow values are set to describe the exact motion of the Mach 10 shock. The boundary conditions at the left and the right are inflow and outflow respectively.

			The contour plots of density in DG solutions with the jump filter and the hybrid limiter at time $T=0.2$ are presented in Fig. \ref{fig:rec_double_p1_p4}, with zoomed-in views in Fig. \ref{fig:rec_double_local}. Firstly, it's evident that the DG scheme with the jump filter effectively captures shock waves, with resolution improving as accuracy increases. These results mirror the effect of TVB limiters in achieving total variation boundedness in standard DG schemes \cite{cockburn1998runge}, as further explored in \cite{MR4673601}.
			
			On the other hand, when utilizing the hybrid limiter associated with the jump filter as lower-order limiters, the scheme adeptly captures shock waves and showcases the advantages of higher-order schemes more comprehensively. Higher-order schemes exhibit superior resolution, particularly below the Mach stem, as illustrated in Fig. \ref{fig:rec_double_local}. This underscores that employing the jump filter as lower-order polynomials in the hybrid limiter not only facilitates efficient capture of shock waves but also minimizes numerical dissipation.
			
			\begin{figure}[htbp]
				\centering
				\begin{minipage}{0.49\linewidth}
					\centering
					\includegraphics[width=1\linewidth]{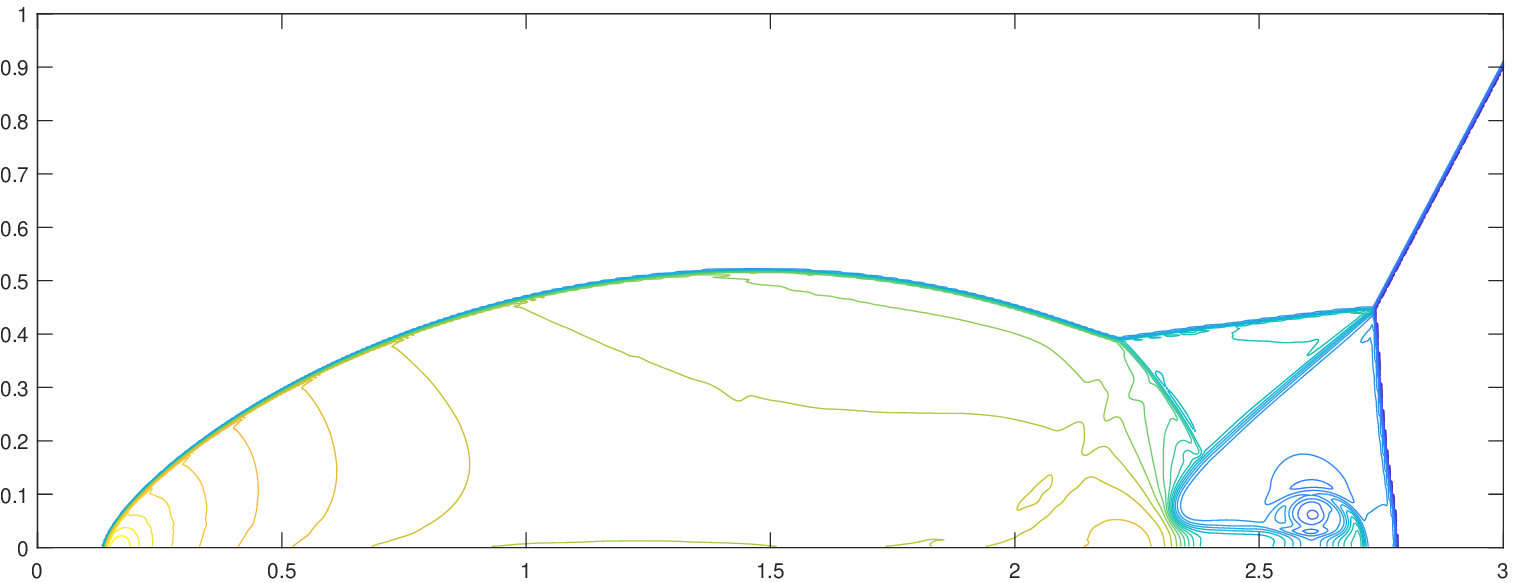}
				\end{minipage}
				\begin{minipage}{0.49\linewidth}
					\centering
					\includegraphics[width=1\linewidth]{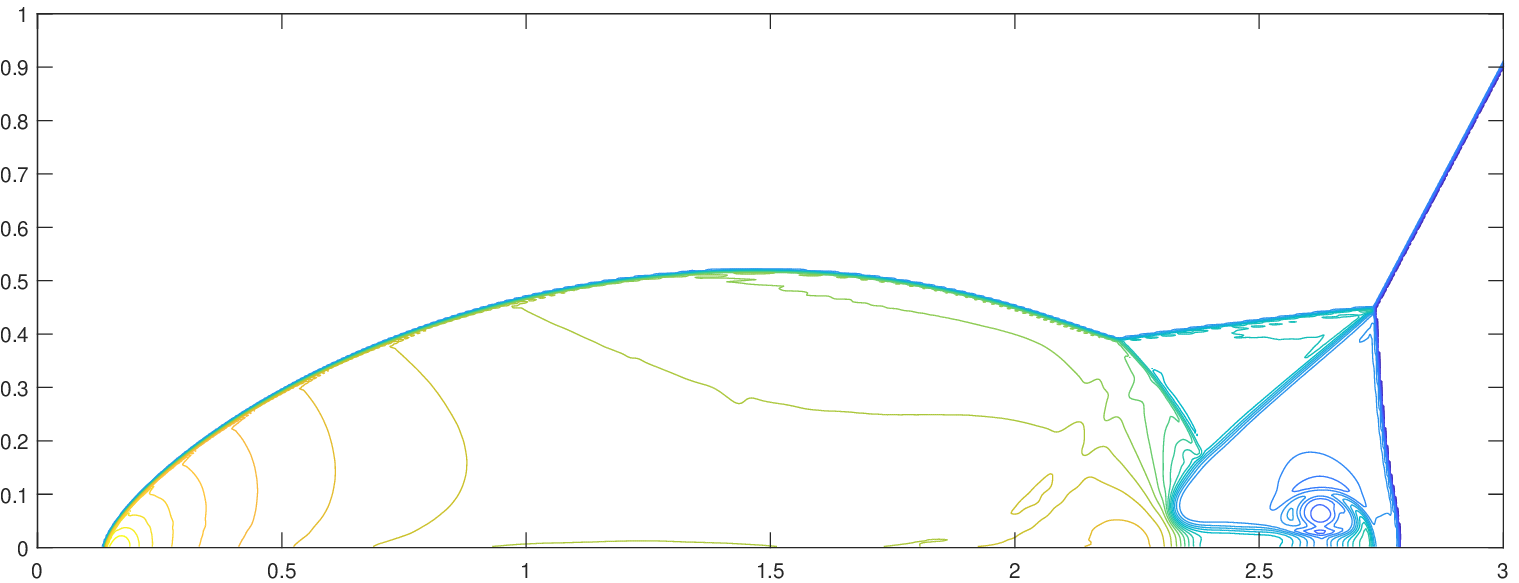}
				\end{minipage}

				\begin{minipage}{0.49\linewidth}
					\centering
					\includegraphics[width=1\linewidth]{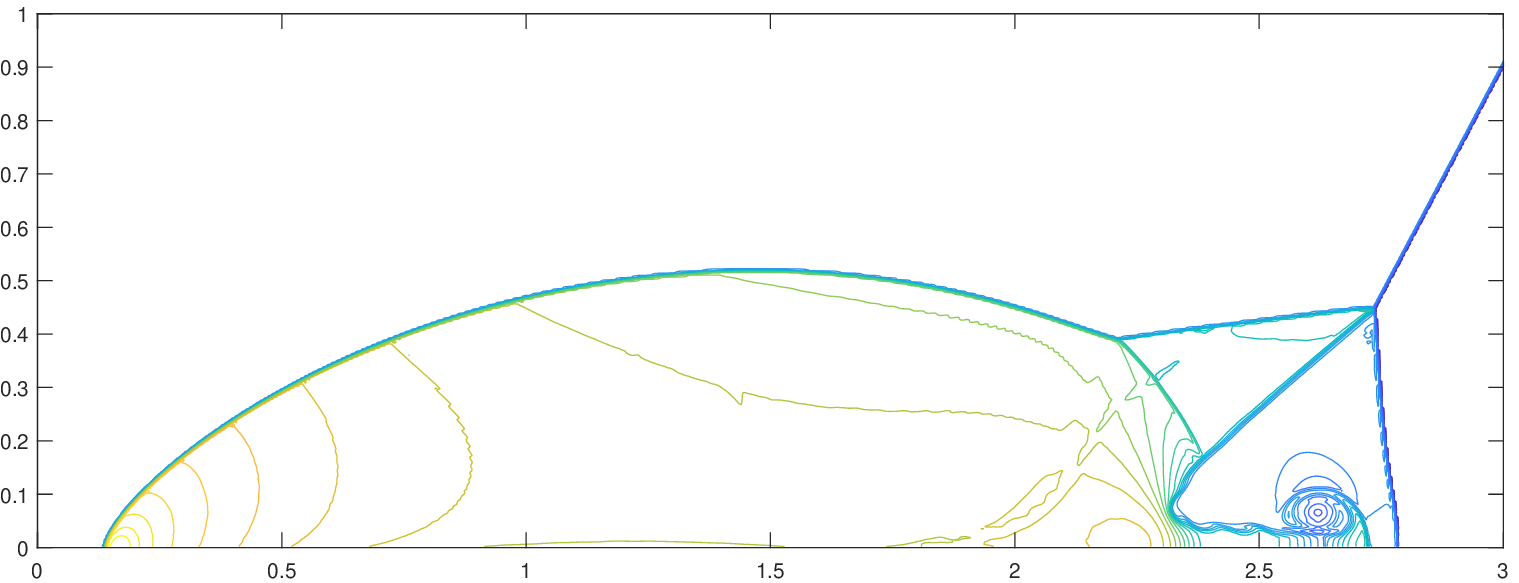}
				\end{minipage}
				\begin{minipage}{0.49\linewidth}
					\centering
					\includegraphics[width=1\linewidth]{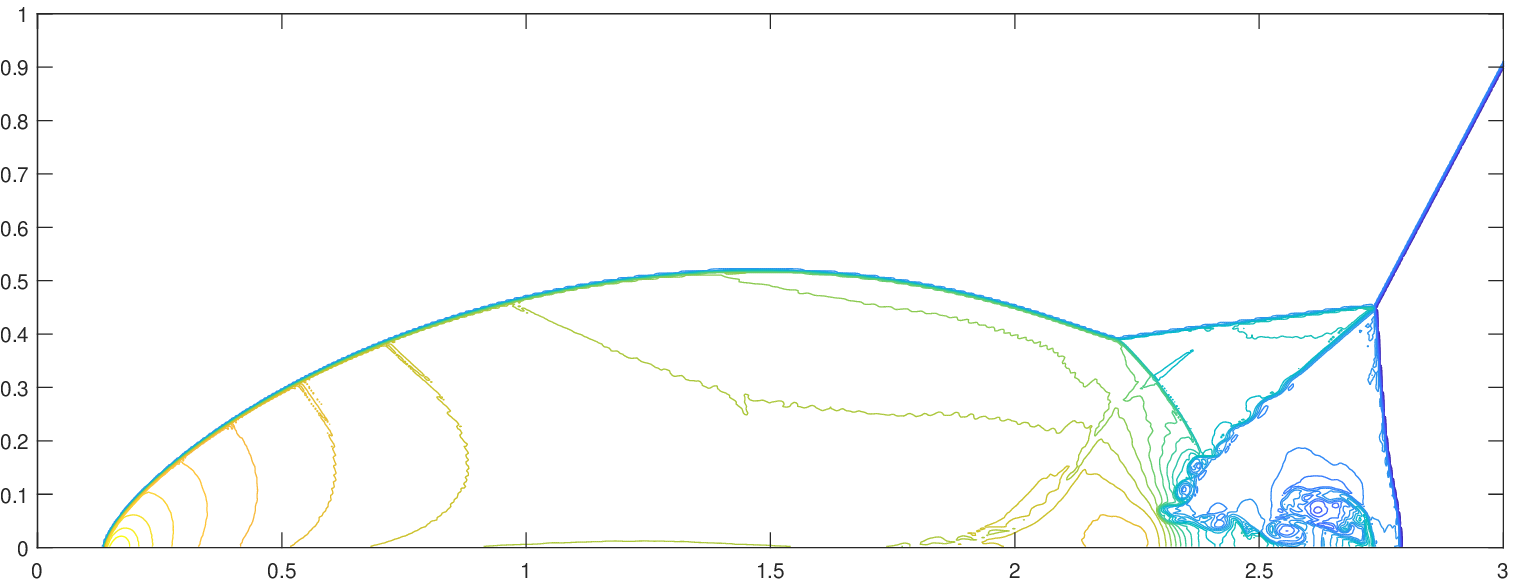}
				\end{minipage}

				\begin{minipage}{0.49\linewidth}
					\centering
					\includegraphics[width=1\linewidth]{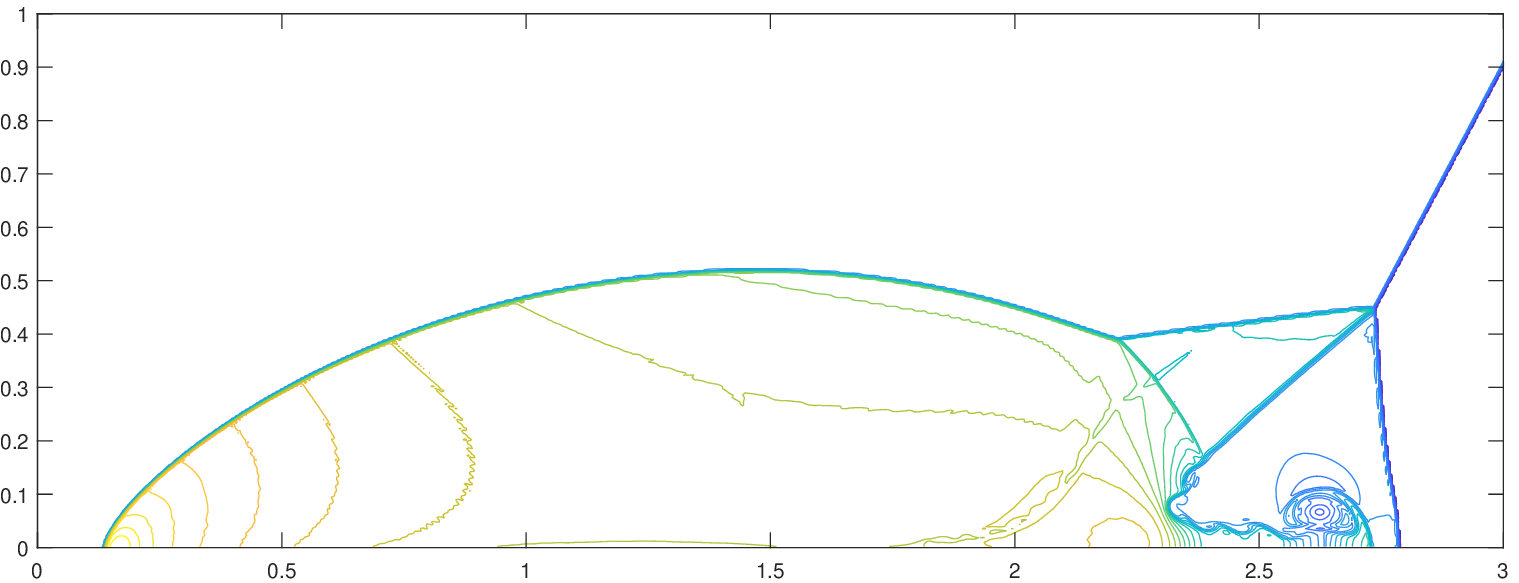}
				\end{minipage}
				\begin{minipage}{0.49\linewidth}
					\centering
					\includegraphics[width=1\linewidth]{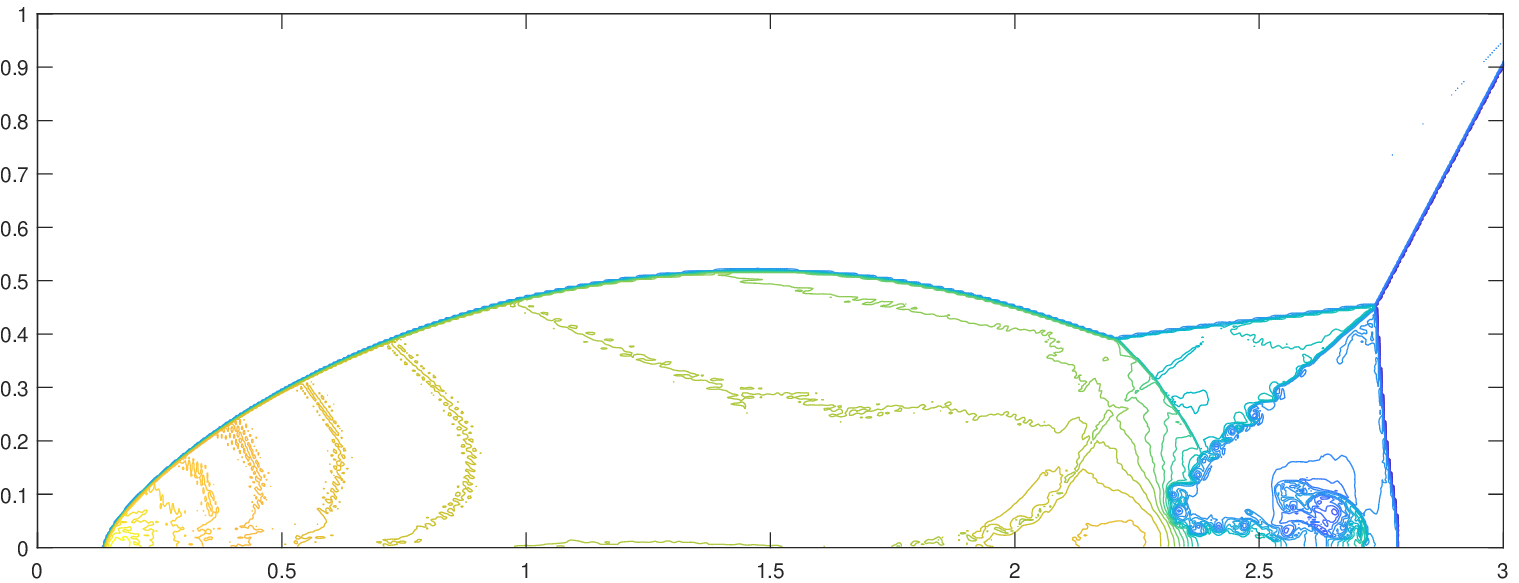}
				\end{minipage}

				\begin{minipage}{0.49\linewidth}
					\centering
					\includegraphics[width=1\linewidth]{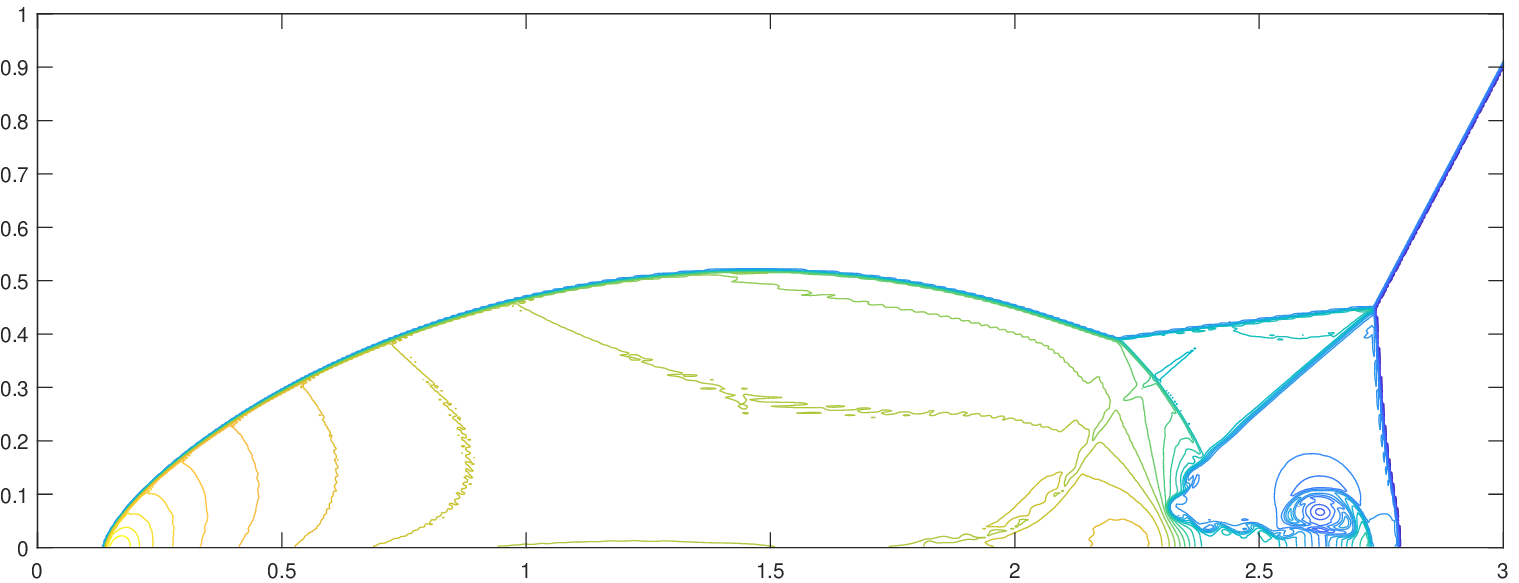}
				\end{minipage}
				\begin{minipage}{0.49\linewidth}
					\centering
					\includegraphics[width=1\linewidth]{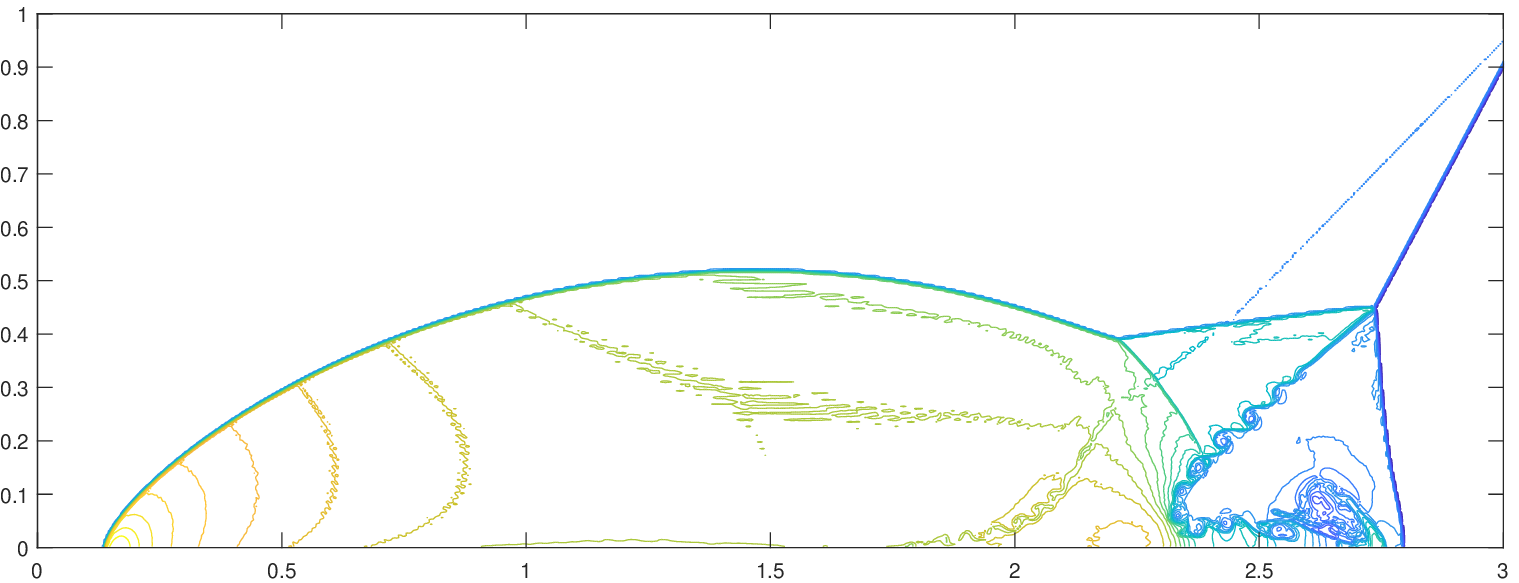}
				\end{minipage}
				\caption{Numerical results for Example \ref{ex:Double_rec}. DG solutions of $k=1,2,3,4$ from top to bottom. Left: the jump filter; Right: the hybrid limiter associated with the jump filter. 30 equally spaced density contours from $1.5$ to $21.5$. $960 \times 240$ cells.}
				\label{fig:rec_double_p1_p4}
			\end{figure}

			\begin{figure}[htbp]
				\centering
				\begin{minipage}{0.24\linewidth}
					\centering
					\includegraphics[width=1\linewidth]{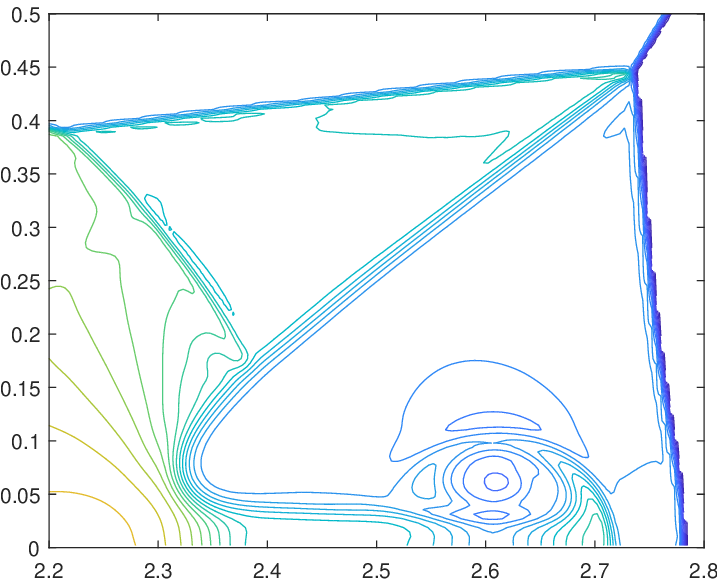}
				\end{minipage}
				\hfill
				\begin{minipage}{0.24\linewidth}
					\centering
					\includegraphics[width=1\linewidth]{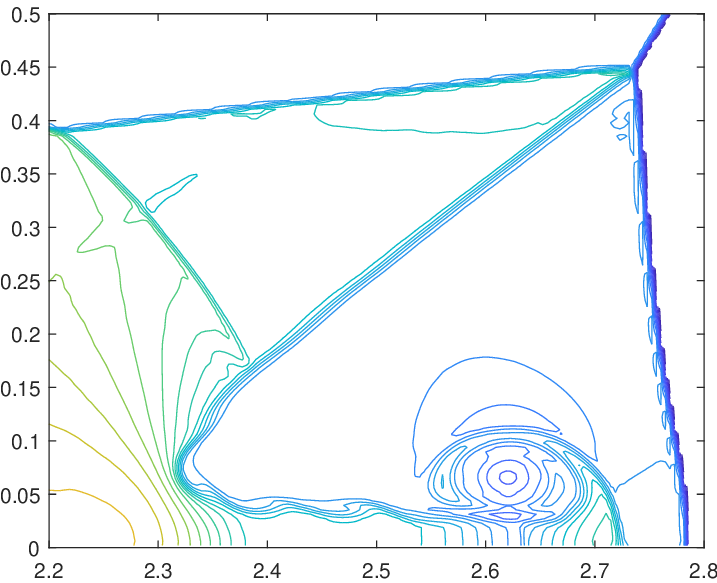}
				\end{minipage}
				\hfill
				\begin{minipage}{0.24\linewidth}
					\centering
					\includegraphics[width=1\linewidth]{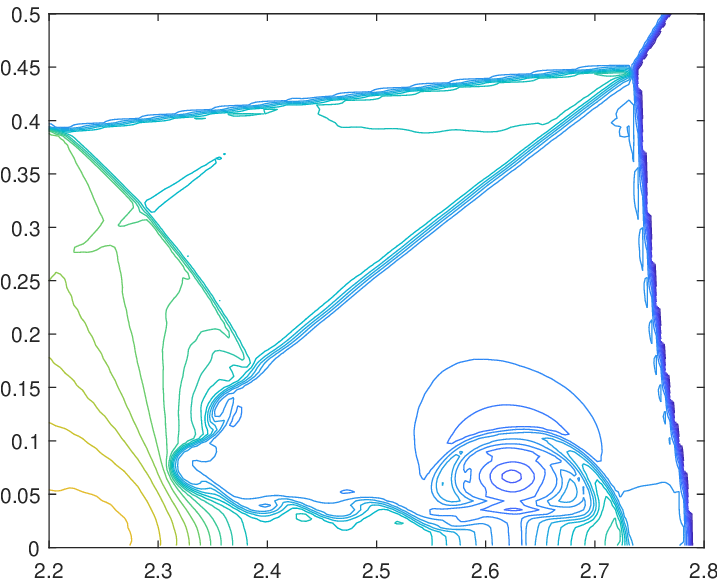}
				\end{minipage}
				\hfill
				\begin{minipage}{0.24\linewidth}
					\centering
					\includegraphics[width=1\linewidth]{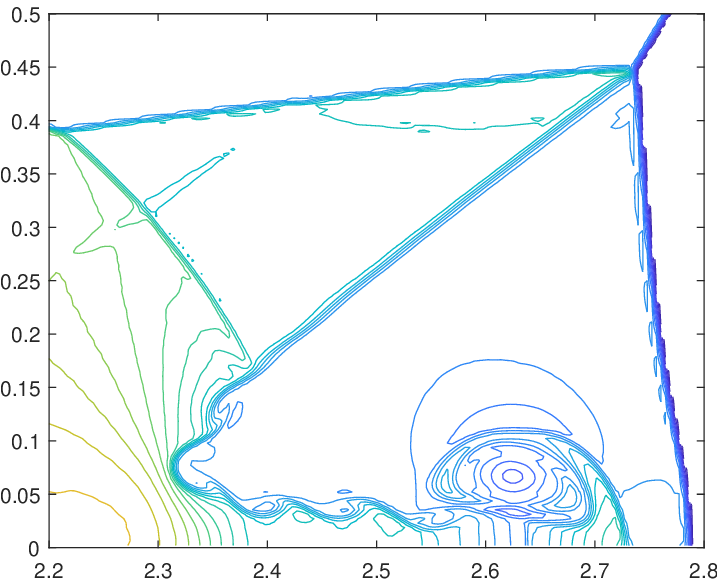}
				\end{minipage}
				\hfill
				\begin{minipage}{0.24\linewidth}
					\centering
					\includegraphics[width=1\linewidth]{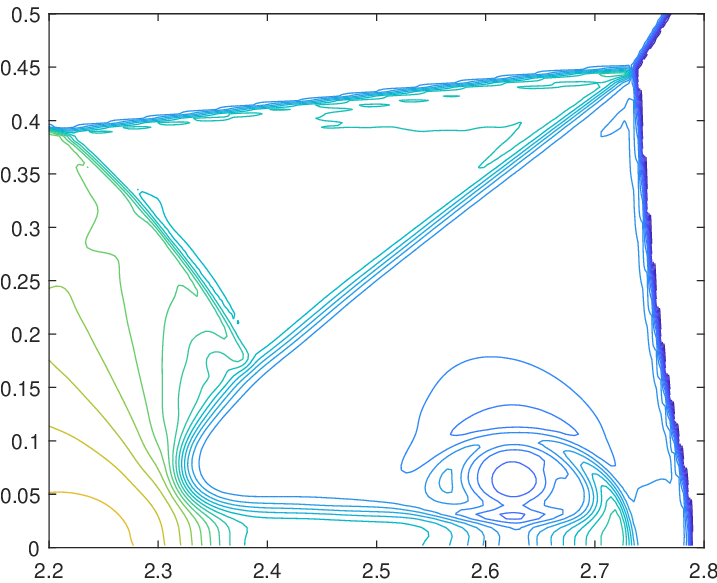}
				\end{minipage}
				\hfill
				\begin{minipage}{0.24\linewidth}
					\centering
					\includegraphics[width=1\linewidth]{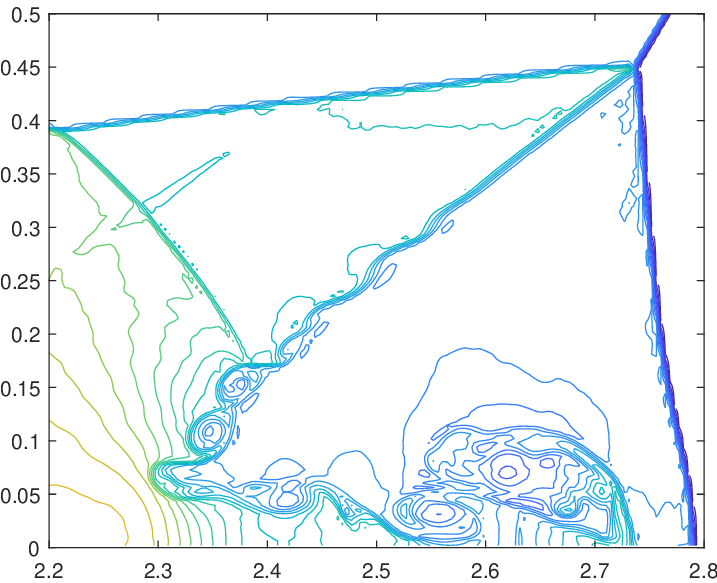}
				\end{minipage}
				\hfill
				\begin{minipage}{0.24\linewidth}
					\centering
					\includegraphics[width=1\linewidth]{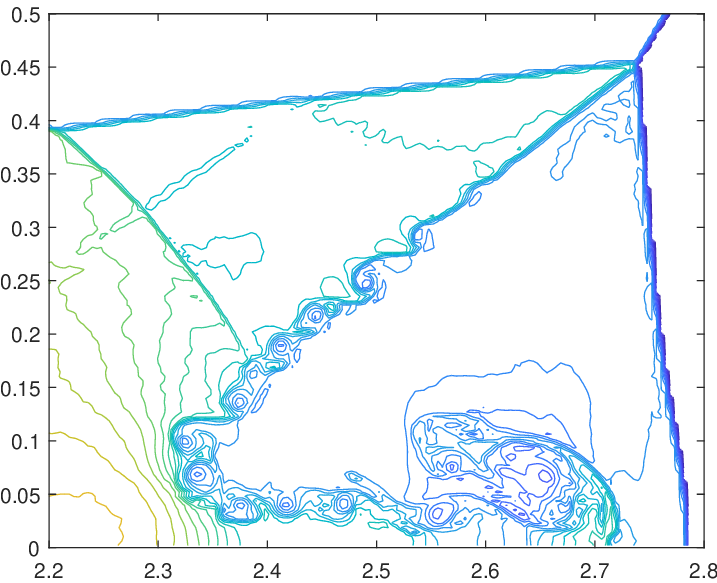}
				\end{minipage}
				\hfill
				\begin{minipage}{0.24\linewidth}
					\centering
					\includegraphics[width=1\linewidth]{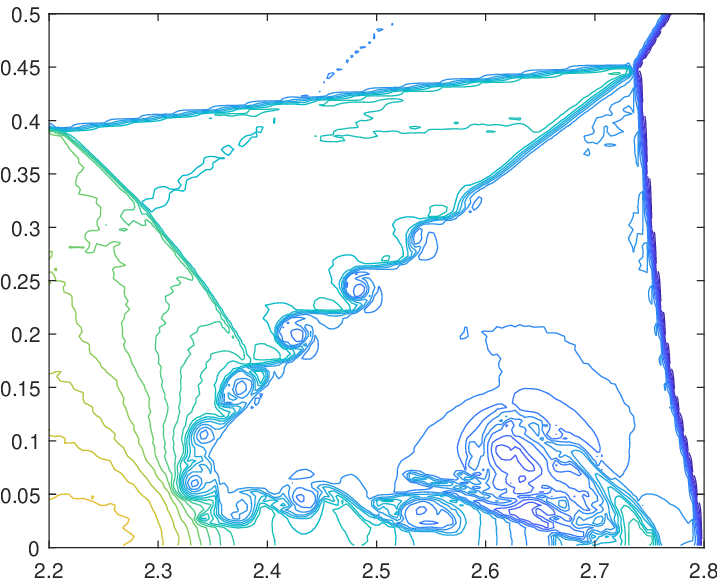}
				\end{minipage}
				\caption{Numerical results for Example \ref{ex:Double_rec}. Zoom-in DG solutions of $k=1,2,3,4$ from left to right. Top: the jump filter; Bottom: the hybrid limiter associated with the jump filter. 30 equally spaced density contours from $1.5$ to $21.5$.  $960 \times 240$ cells.}
				\label{fig:rec_double_local}
			\end{figure}	
		\end{example}

		\begin{example}[Forward facing step problem] \label{ex:forward_rec}
			The wind tunnel is $3$ units in length and $1$ unit in width, and the step is $0.2$ units high, located $0.6$ units from the left end of the tunnel.
			The initial condition is a uniform flow given by $(\rho,u,v,p)^T=(\gamma,3,0,1)^T$. Reflective boundary conditions are applied along the wall boundaries, and inflow and outflow boundary conditions are imposed along the left and right boundaries.
			
			The DG solutions with the jump filter and the associated hybrid limiter are depicted in Fig. \ref{fig:rec_forward_p1_p4} at time $T=4$. Notably, all schemes exhibit essential oscillation-free behavior. Additionally, a discernible difference emerges between higher and lower-order schemes: the former more effectively captures slip lines, showcasing the superiority of higher-order schemes.
			Moreover, employing the hybrid approach yields even lower dissipation. This can be attributed to its ability to effectively capture the intricate flow structure and resolve slip lines better than the jump filter alone.	
			\begin{figure}[htbp]
				\centering
				\begin{minipage}{0.49\linewidth}
					\centering
					\includegraphics[width=1\linewidth]{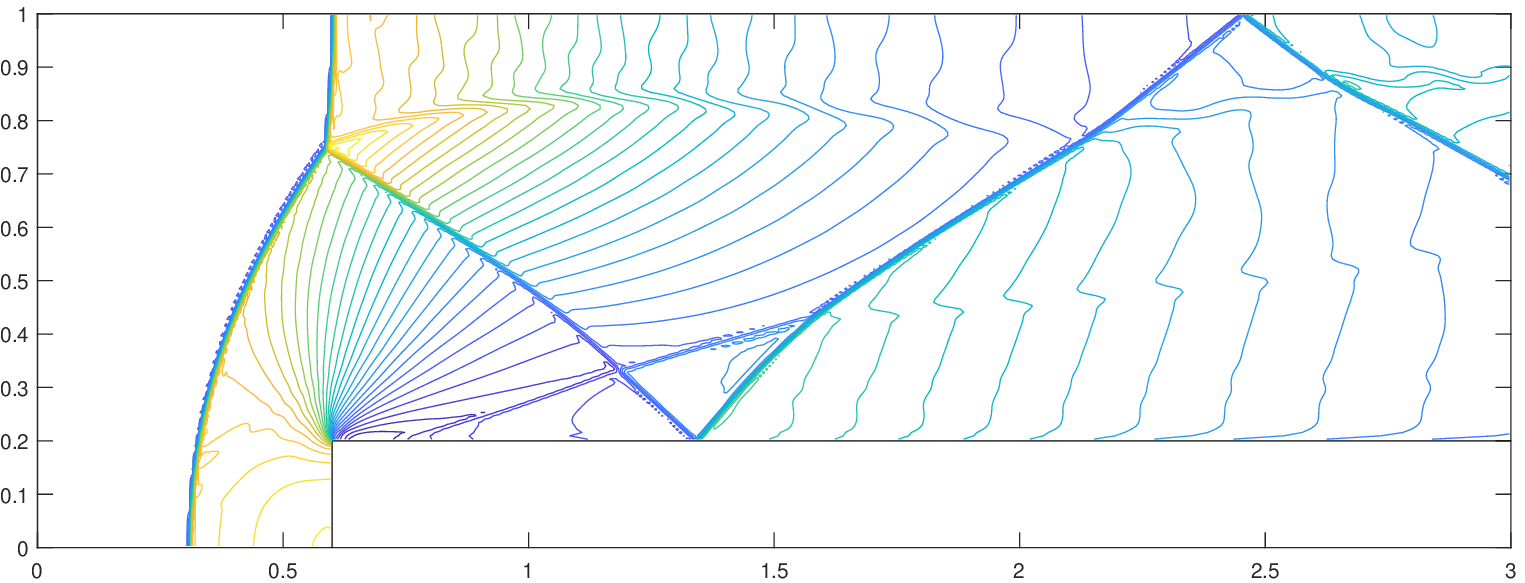}
				\end{minipage}
				\begin{minipage}{0.49\linewidth}
					\centering
					\includegraphics[width=1\linewidth]{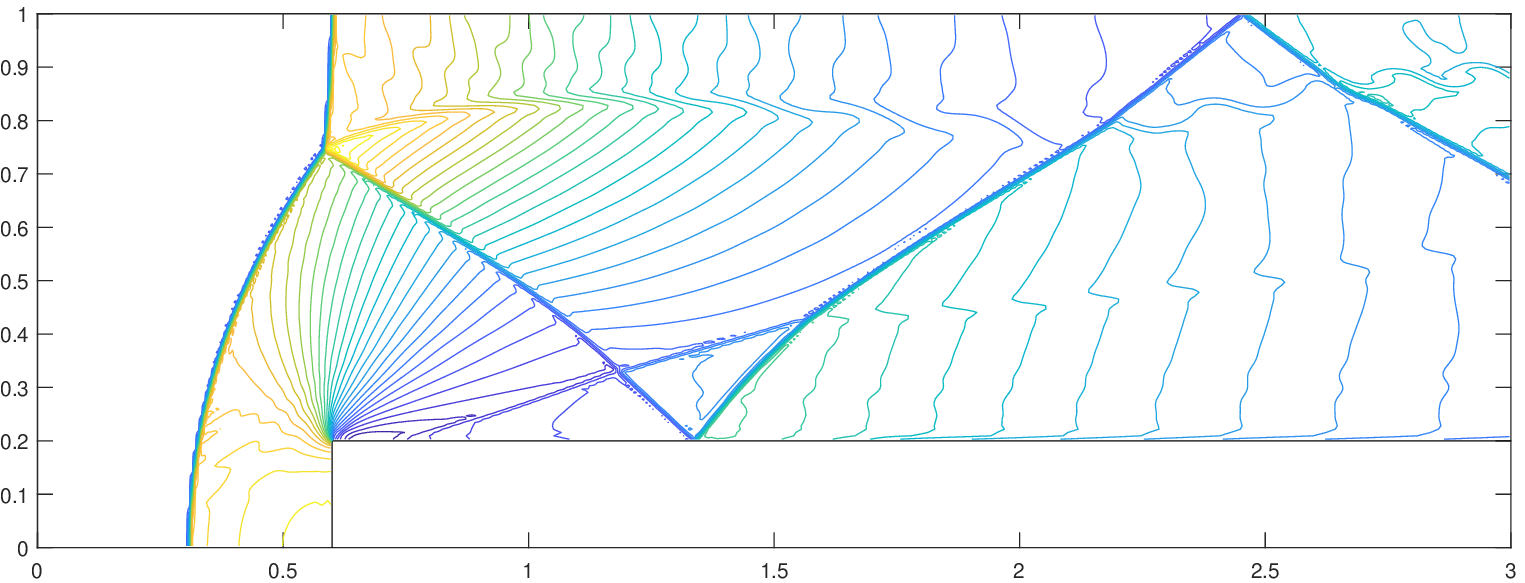}
				\end{minipage}

				\begin{minipage}{0.49\linewidth}
					\centering
					\includegraphics[width=1\linewidth]{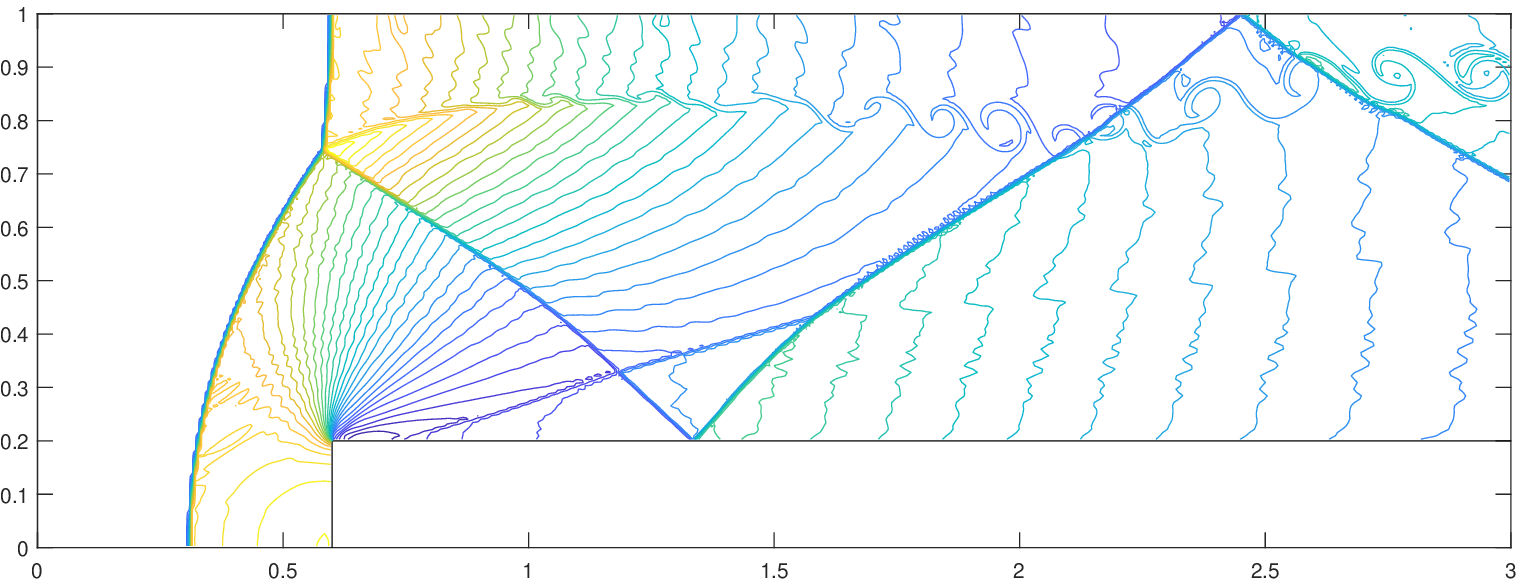}
				\end{minipage}
				\begin{minipage}{0.49\linewidth}
					\centering
					\includegraphics[width=1\linewidth]{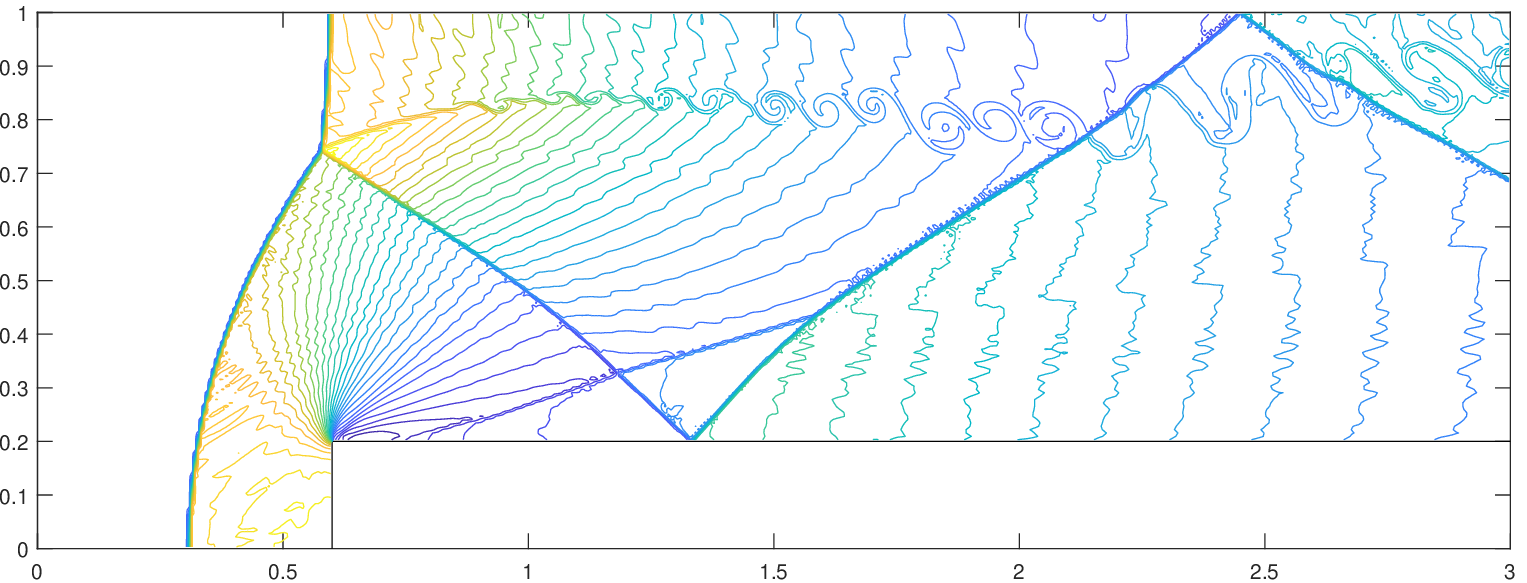}
				\end{minipage}

				\begin{minipage}{0.49\linewidth}
					\centering
					\includegraphics[width=1\linewidth]{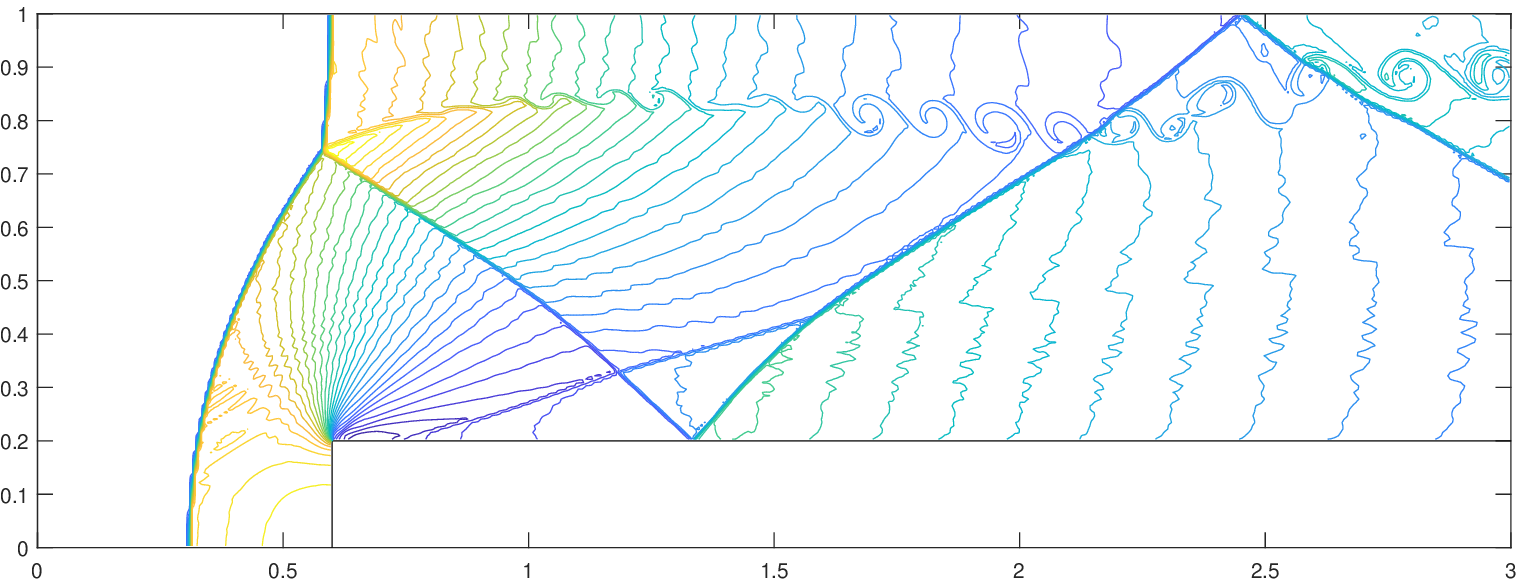}
				\end{minipage}
				\begin{minipage}{0.49\linewidth}
					\centering
					\includegraphics[width=1\linewidth]{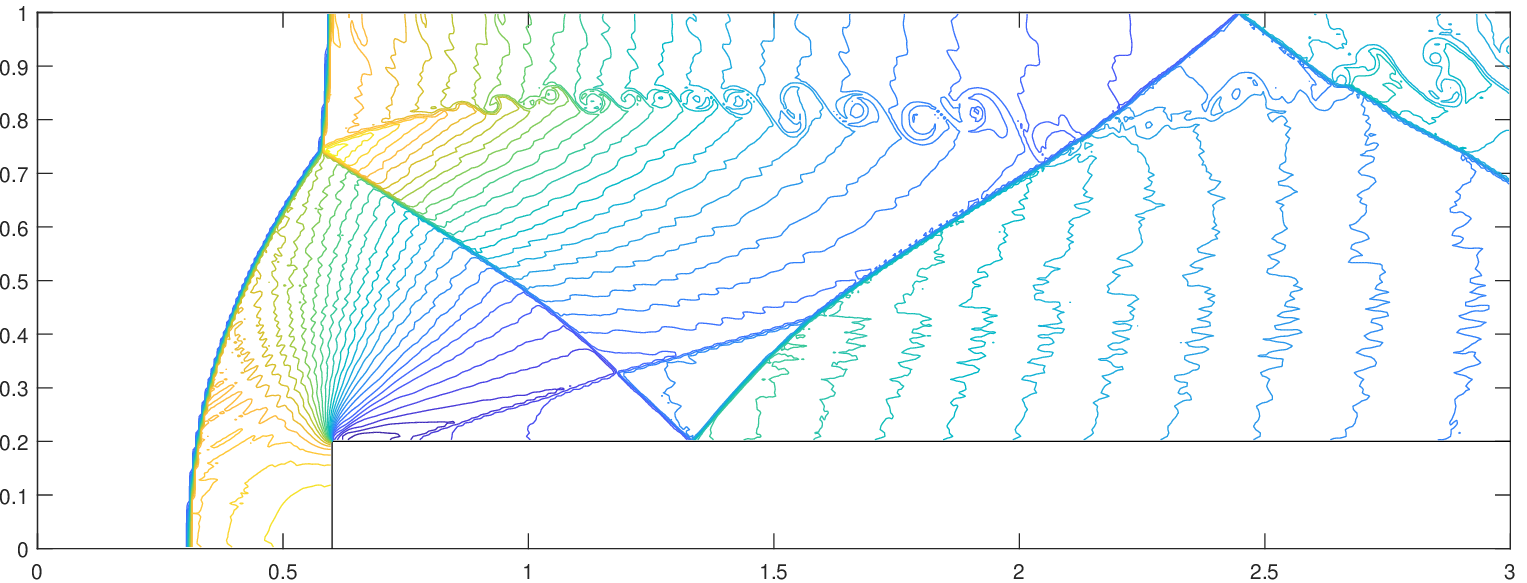}
				\end{minipage}

				\begin{minipage}{0.49\linewidth}
					\centering
					\includegraphics[width=1\linewidth]{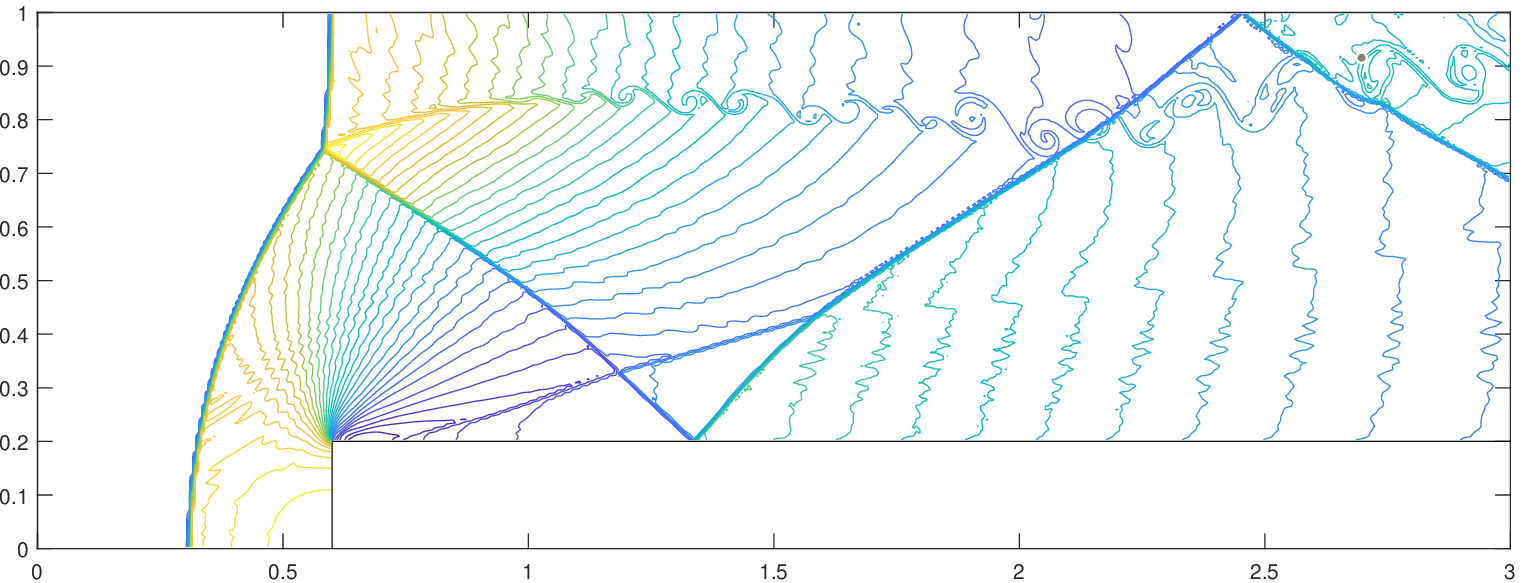}
				\end{minipage}
				\begin{minipage}{0.49\linewidth}
					\centering
					\includegraphics[width=1\linewidth]{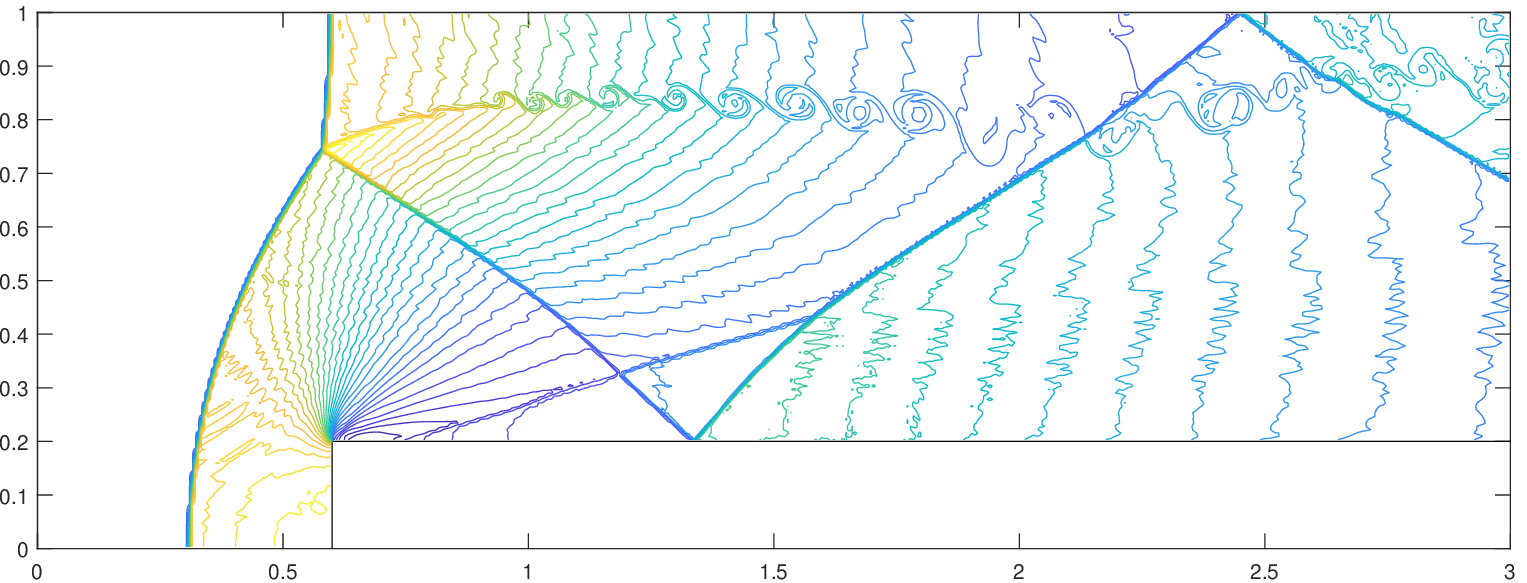}
				\end{minipage}
				\caption{Numerical results for Example \ref{ex:forward_rec}. DG solutions of $k=1,2,3,4$ from top to bottom. Left: the jump filter; Right: the associated hybrid limiter. 30 equally spaced density contours from $0.32$ to $6.15$.  $480 \times 160$ cells.}
				\label{fig:rec_forward_p1_p4}
			\end{figure}
		\end{example}
		
		\begin{example}[Shock passing a backward facing corner]\label{ex:diff_rec}
			Shock diffraction is a very common phenomenon. The computational domain for this problem is $\left[0,1\right]\times\left[6,11\right]$ and $\left[1,13\right]\times\left[0,11\right]$. The
			initial condition is a pure right-moving shock of Mach $= 5.09$, initially located at $x = 0.5$
			and $6 \leq y\leq 11$, moving into undisturbed air ahead of the shock with $(\rho,u,v,p)^T=(1.4,0,0,1)^T$. We use the inflow boundary condition at $\{x = 0, y \in [6, 11]\}$, reflective boundary condition at $\{x \in [0, 1], y = 6\}$ and $\{x = 1, y \in [0, 6]\}$, and outflow boundary condition elsewhere. To ensure the positivity of the density and pressure of the DG solutions, we first apply the jump filter, and then we utilize the positivity-preserving (PP) limiter introduced by Zhang and Shu \cite{ZHANG20108918} in Examples \ref{ex:diff_rec} and \ref{ex:highmach}. Due to the conservation nature of our scheme, integrating the PP limiter is straightforward. Based on our numerical tests, we've found that the DG scheme with the jump filter can maintain positivity even without PP limiters for certain grid configurations. However, this isn't universally true. Fig. \ref{fig:Diff_two} depicts density contour plots at time $T=2.3$. Evidently, all schemes successfully exhibit the essential property of non-oscillation in the numerical solution. Furthermore, the higher-order scheme consistently outperforms the lower-order scheme in accurately capturing the slip line.
			
			\begin{figure}[ht]
				\centering
				\begin{minipage}{0.49\linewidth}
					\centering
					\includegraphics[width=0.9\linewidth]{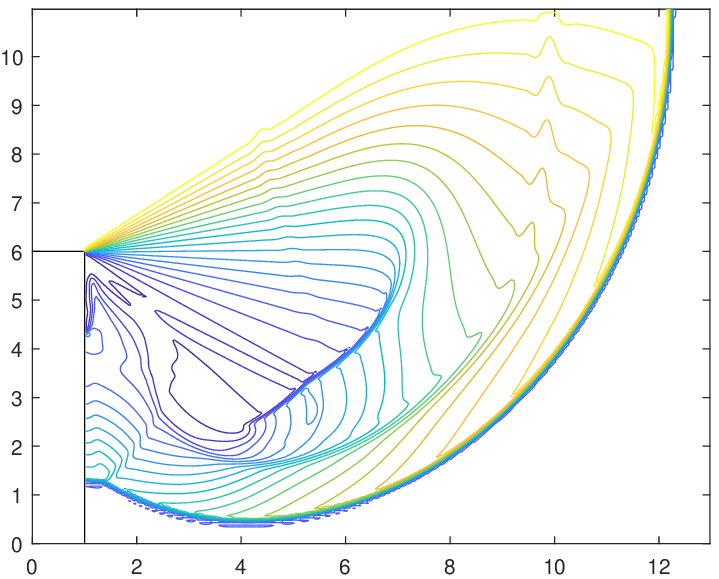}
				\end{minipage}
				\begin{minipage}{0.49\linewidth}
					\centering
					\includegraphics[width=0.9\linewidth]{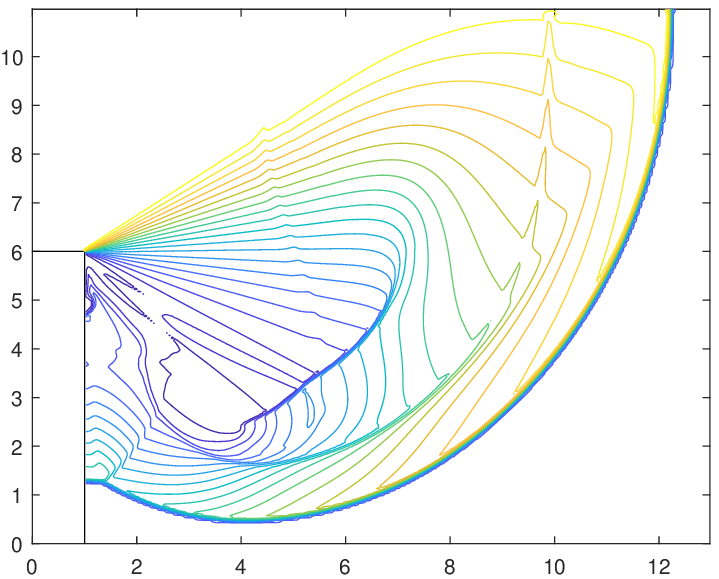}
				\end{minipage}
				\caption{Numerical results of DG scheme with the jump filter for Example \ref{ex:diff_rec}. Left: $P^1$. Right: $P^3$. 20 equally spaced density contours from $0.066227$ to $7.0668$. $260 \times 220$ cells.}
				\label{fig:Diff_two}
			\end{figure}
		\end{example}
		\begin{figure}[htbp]
			\centering
			\begin{minipage}{0.49\linewidth}
				\centering
				\includegraphics[width=0.9\linewidth]{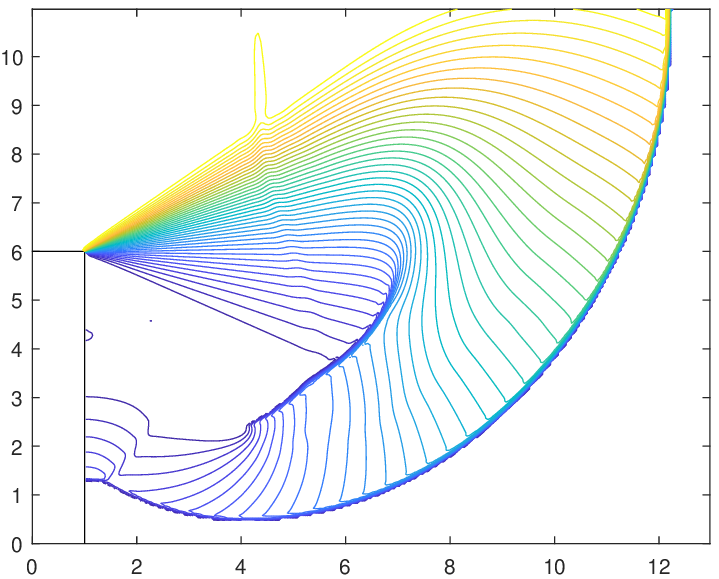}
			\end{minipage}
			\begin{minipage}{0.49\linewidth}
				\centering
				\includegraphics[width=0.9\linewidth]{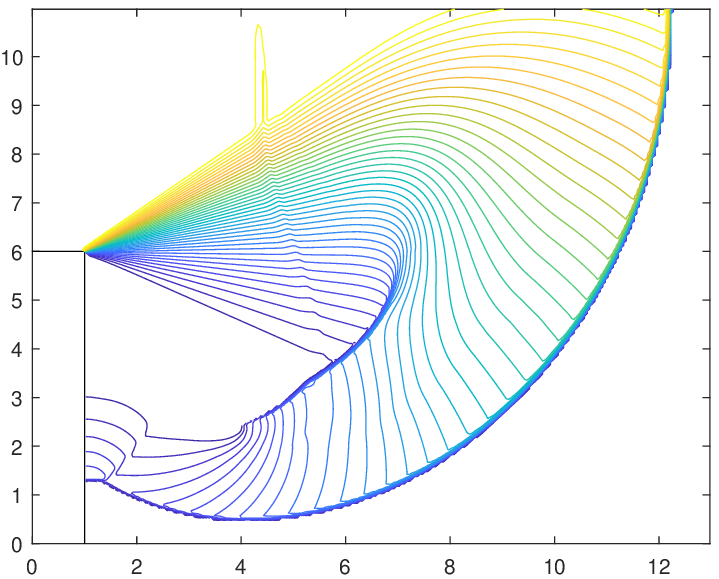}
			\end{minipage}
			\caption{Numerical results of DG scheme with the jump filter for Example \ref{ex:diff_rec}. Left: $k=1$. Right: $k=3$. 40 equally spaced pressure contours from $1.1$ to $111$. $260 \times 220$ cells.}
			\label{fig:Diff_minmod}
		\end{figure}
		\begin{example}[High Mach number astrophysical jets]\label{ex:highmach}
			We consider the high Mach number astrophysical jets without the radiative cooling \cite{ZHANG20108918}.
			A Mach $2000$ problem is considered to show the robustness of the scheme with the PP limiter \cite{ZHANG20108918}. The computational domain is taken as $[0, 1] \times [-0.25, 0.25]$, initially
			full of the ambient gas with $(\rho , u, v, p,\gamma)^{T} = (0.5, 0, 0, 0.4127,5/3)^{T}$. The right, top, and bottom boundary are outflows. For the left boundary, $(\rho , u, v, p,\gamma)^{T} = (5, 800, 0, 0.4127,5/3)^{T}$ if
			$y\in[-0.05, 0.05]$ and $(\rho , u, v, p,\gamma)^{T} = (5, 0, 0, 0.4127,5/3)^{T}$ otherwise. The numerical solution at a mesh of $320 \times 160$ is shown in Fig. \ref{fig:2000mach}. We can observe that the numerical results closely resemble the corresponding outcomes in \cite{ZHANG20108918}.
			\begin{figure}[ht]
				\centering
				\begin{minipage}[b]{0.3\linewidth}
					\centering
					\includegraphics[width=\linewidth]{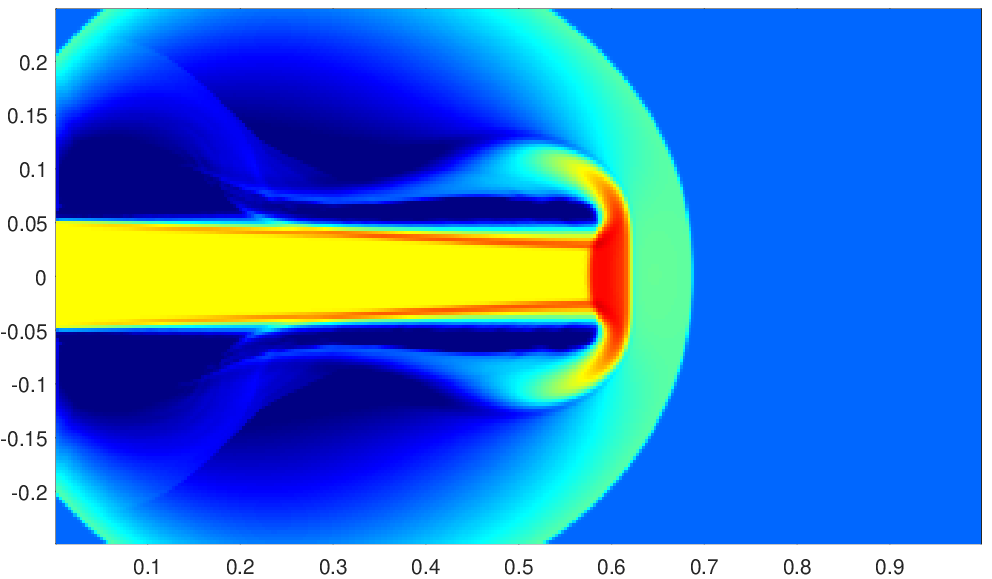}
				\end{minipage}
				\begin{minipage}[b]{0.3\linewidth}
					\centering
					\includegraphics[width=\linewidth]{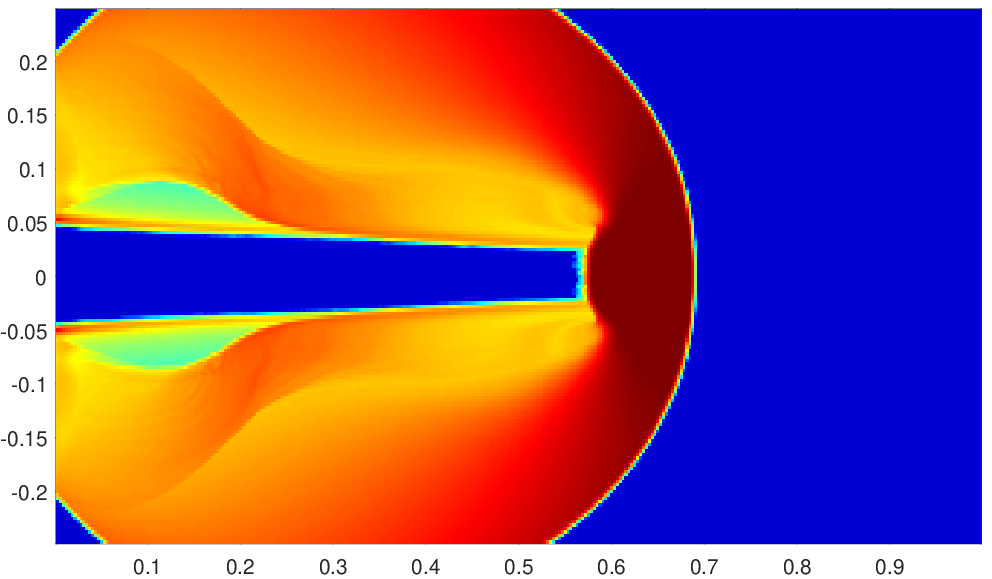}
				\end{minipage}
				\begin{minipage}[b]{0.3\linewidth}
					\centering
					\includegraphics[width=\linewidth]{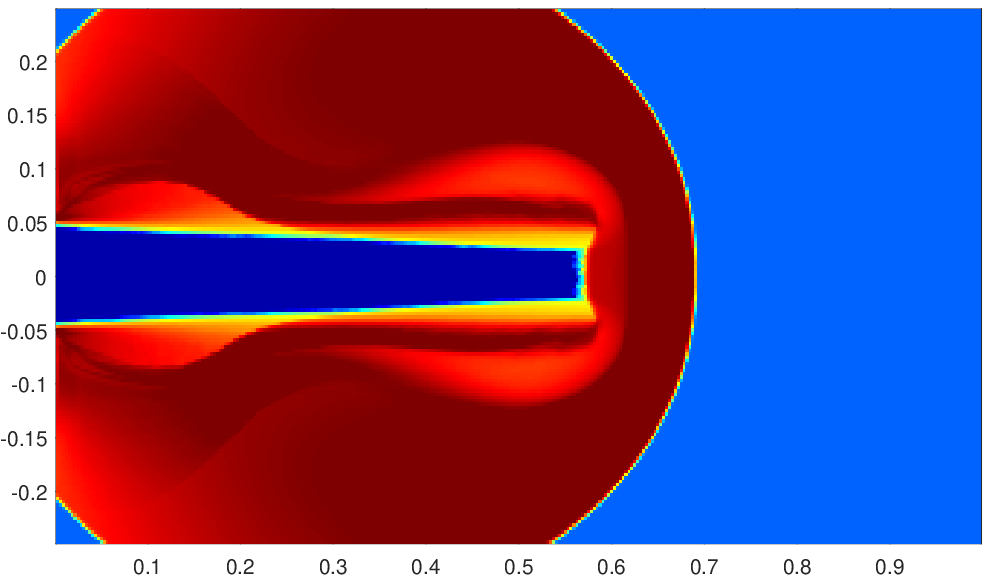}
				\end{minipage}
				\caption{Numerical results of DG scheme with the jump filter for Example \ref{ex:highmach}. From left to right: density; pressure; temperature. Scales are logarithmic. $P^3$ basis. $320\times 160$ cells.}
				\label{fig:2000mach}
			\end{figure}
			
		\end{example}

		\begin{example}[Shock-bubble interaction]\label{ex:shockbubble} In this experiment we simulate a strong rightwards moving shock wave over a low density gas bubble \cite{MR2524513}. The computational domain is $[-0.1, 1.6] \times [-0.5, 0.5]$. At the initial time, the region exhibits three different states: (A) $x < 0$; (B) a circular disk centered at $(0.3, 0.0)$ with a radius of $0.2$; (C) other regions. In region (A),  $(\rho, u, v, p)^{T}=(\frac{11}{3}, 2.7136021011998722, 0.0, 10.0)^{T}$. In region (B), $(\rho, u, v, p)^{T}=(0.1, 0.0, 0.0, 1.0)^T$. In region (C), $(\rho, u, v, p)^{T}=(1.0, 0.0, 0.0, 1.0)^T$. The left boundary has the Dirichlet boundary condition, the right has the outflow boundary condition, and the top and bottom boundaries have the solid wall boundary condition.
			In this example, a moving shockwave propagates to the right and compresses the bubble in the region (B). On the left of Fig. \ref{fig:shockbubble}, we present numerical results obtained with different schemes at $T = 0.4$, and on the right of Fig. \ref{fig:shockbubble}, we display the density distribution around the bubble. It can be observed that the higher-order schemes with the jump filter capture more unstable and intricate structures, providing a more detailed representation of the flow field.
			\begin{figure}[ht]
				\centering
				\begin{minipage}[b]{0.49\linewidth}
					\centering
					\includegraphics[width=0.9\linewidth]{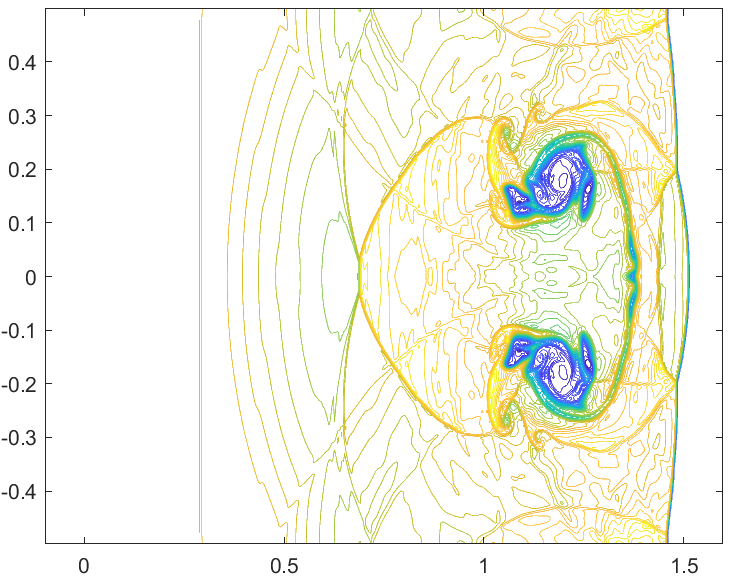}
				\end{minipage}
				\begin{minipage}[b]{0.49\linewidth}
					\centering
					\includegraphics[width=0.9\linewidth]{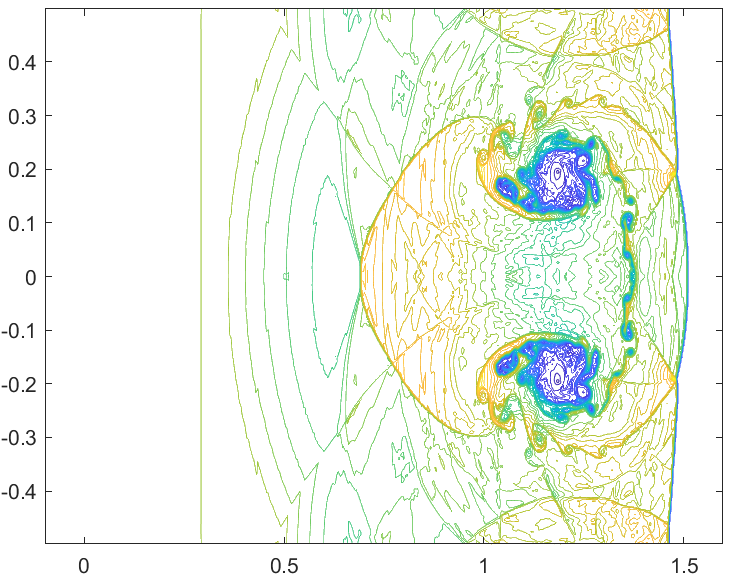}
				\end{minipage}
				\begin{minipage}[b]{0.49\linewidth}
					\centering
					\includegraphics[width=0.9\linewidth]{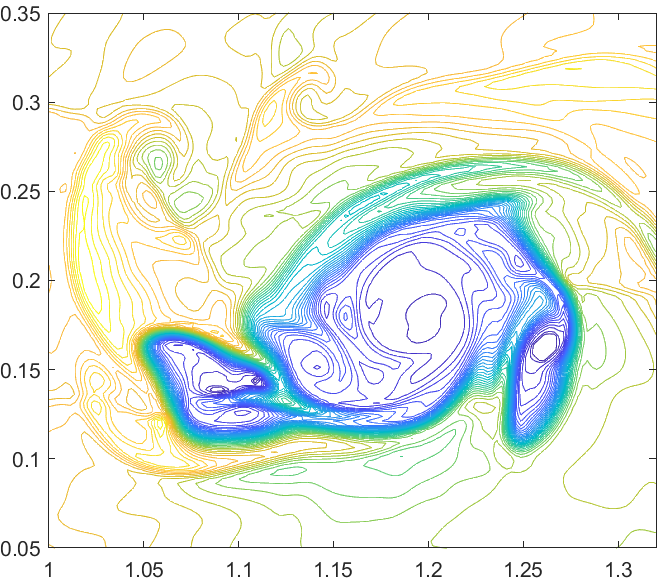}
				\end{minipage}
				\begin{minipage}[b]{0.49\linewidth}
					\centering
					\includegraphics[width=0.9\linewidth]{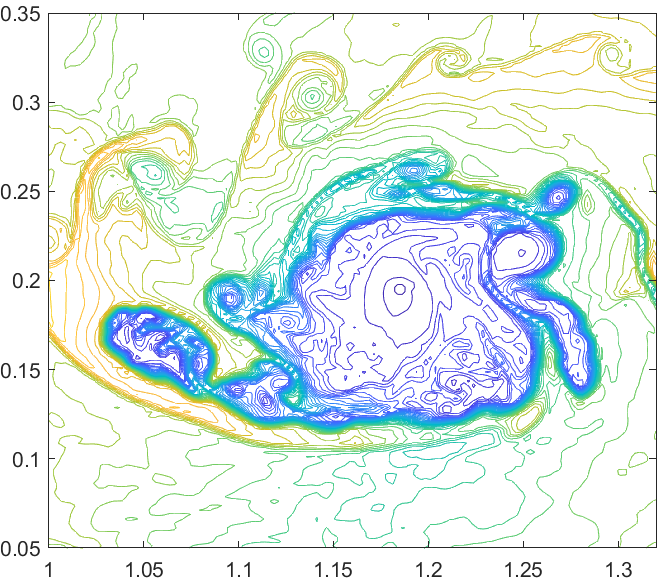}
				\end{minipage}
				\caption{Numerical results of DG scheme with the jump filter for Example \ref{ex:shockbubble}. 30 equally spaced density contours from 0.192 to 5.618. From left to right and top to bottom: $P^1$; $P^3$; $P^1$ (Zoomed-in); $P^3$ (Zoomed-in). $680 \times 400$ cells.}
				\label{fig:shockbubble}
			\end{figure}

		\end{example}

		\begin{example}[Kelvin-Helmholtz (KH) instability]\label{ex:KHinsability}
			The KH instability problem is considered in a computational domain of $[-0.5, 0.5]^2$. The initial values within this domain are set as follows:
			\[
			(\rho, u, v, p)^T =
			\begin{cases}
				(1, -0.5 + 0.5e^{(y+0.25)/L}, 0.01 \sin(4\pi x), 1.5)^T, & \text{for } y \in [-0.5, -0.25), \\
				(2, 0.5 - 0.5e^{(-y-0.25)/L}, 0.01 \sin(4\pi x), 1.5)^T, & \text{for } y \in [-0.25, 0), \\
				(2, 0.5 - 0.5e^{(y-0.25)/L}, 0.01 \sin(4\pi x), 1.5)^T, & \text{for } y \in [0, 0.25), \\
				(1, -0.5 + 0.5e^{(0.25-y)/L}, 0.01 \sin(4\pi x), 1.5)^T, & \text{for } y \in [0.25, 0.5],
			\end{cases}
			\]
			where $L=0.00625$. Periodic boundary conditions are applied and the simulation concludes at $T = 4$. Fig. \ref{fig:kh} depicts the density distribution obtained using the jump filter with $P^1$ and $P^3$ basis functions. Noticeably, the higher-order schemes generate more intricate vortex structures and turbulent mixing layers.
			
			\begin{figure}[htbp]
				\centering
				\begin{minipage}{0.49\linewidth}
					\centering
					\includegraphics[width=0.9\linewidth]{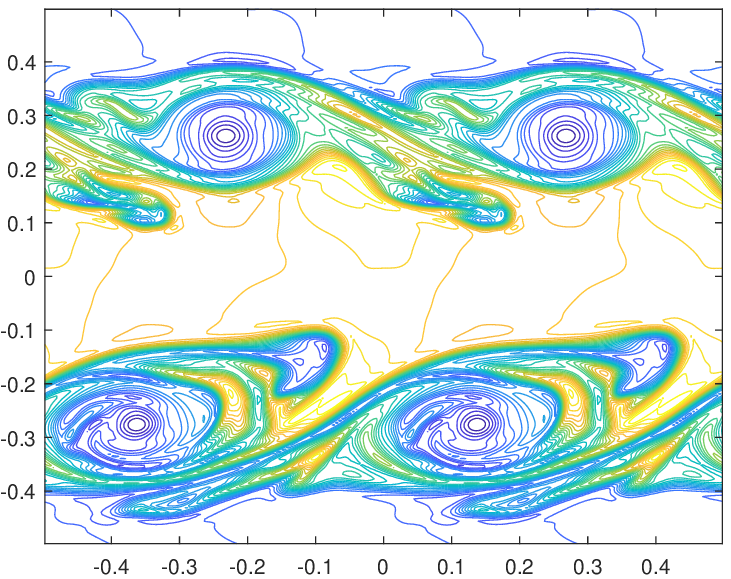}
				\end{minipage}
				\begin{minipage}{0.49\linewidth}
					\centering
					\includegraphics[width=0.9\linewidth]{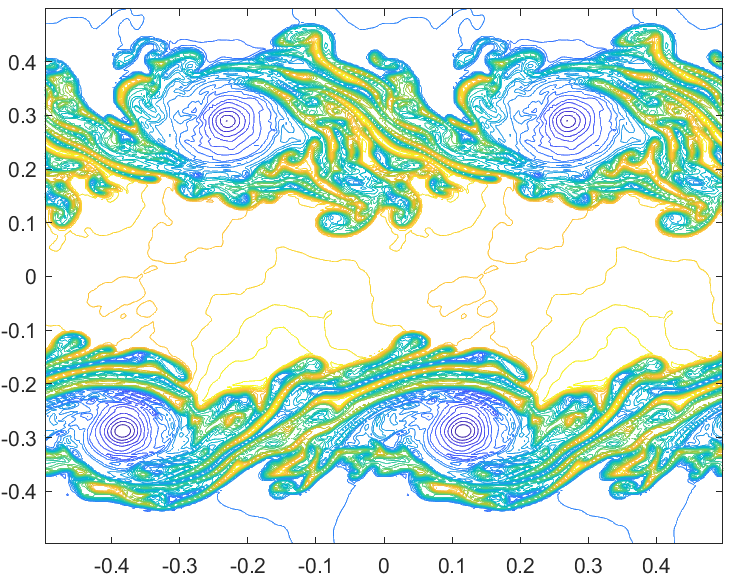}
				\end{minipage}
				\caption{Numerical results of DG scheme with the jump filter for Example \ref{ex:KHinsability}. 30 equally spaced density contours from 0.7 to 2.3. Left: $P^1$. Right: $P^3$. $240\times 240$ cells. }
				\label{fig:kh}
			\end{figure}
		\end{example}
		
		\section{Conclusions}\label{sec:conclusions}
		By incorporating a local viscosity term based solely on cell interface jumps, this paper introduces a novel shock capturing approach to the DG scheme. This scheme is efficiently implemented through a splitting fashion without impacting the chosen time step. Its favorable properties are ensured by rigorous mathematical theory, guaranteeing conservation, $L^2$ stability, and optimal error estimation. Extensive numerical experiments, primarily focusing on various test cases of the Euler equations, validate the proposed method, showcasing its efficacy in controlling numerical spurious oscillations, particularly in scenarios involving strong shock waves.  Furthermore, by integrating the novel jump filter into a hybrid limiter framework, the scheme effectively suppresses numerical spurious oscillations while further reducing dissipation. The jump filter maintains the compactness of the DG scheme, facilitating easy implementation and enabling efficient parallel computations. An important advantage of this approach is its simplicity and computational efficiency. For each time step, only the multiplication of the coefficients by a precomputed factor is necessary. This factor is calculated in physical space without the need for characteristic decomposition. It appears that the jump filter can be seamlessly implemented on unstructured meshes, and we plan to explore this capability and its application in steady-state problems in our future work to enhance its versatility and applicability.

		\bibliographystyle{plain}

\begin{thebibliography}{10}
			
			\bibitem{MR3209963}
			H.~Abbassi, F.~Mashayek, and G.B. Jacobs.
			\newblock Shock capturing with entropy-based artificial viscosity for staggered
			grid discontinuous spectral element method.
			\newblock {\em Comput. \& Fluids}, 98:152--163, 2014.
			
			\bibitem{barter2007shock}
			G.~Barter and D.~Darmofal.
			\newblock {Shock capturing with higher-order, PDE-based artificial viscosity}.
			\newblock In {\em 18th AIAA Computational Fluid Dynamics Conference}, page
			3823, 2007.
			
			\bibitem{bassi2009high}
			F.~Bassi, A.~Crivellini, A.~Ghidoni, and S.~Rebay.
			\newblock {High-order discontinuous Galerkin discretization of transonic
				turbulent flows}.
			\newblock In {\em 47th AIAA aerospace sciences meeting including the new
				horizons forum and aerospace exposition}, page 180, 2009.
			
			\bibitem{bhagatwala2009modified}
			A.~Bhagatwala and S.K. Lele.
			\newblock A modified artificial viscosity approach for compressible turbulence
			simulations.
			\newblock {\em J. Comput. Phys.}, 228(14):4965--4969, 2009.
			
			\bibitem{biswas1994parallel}
			R.~Biswas, K.D. Devine, and J.~Flaherty.
			\newblock Parallel, adaptive finite element methods for conservation laws.
			\newblock {\em Appl. Numer. Math.}, 14(1-3):255--283, 1994.
			
			\bibitem{burbeau2001problem}
			A.~Burbeau, P.~Sagaut, and Ch.-H. Bruneau.
			\newblock A problem-independent limiter for high-order {R}unge-{K}utta
			discontinuous {G}alerkin methods.
			\newblock {\em J. Comput. Phys.}, 169(1):111--150, 2001.
			
			\bibitem{burman2007nonlinear}
			E.~Burman.
			\newblock On nonlinear artificial viscosity, discrete maximum principle and
			hyperbolic conservation laws.
			\newblock {\em BIT Numerical Mathematics}, 47:715--733, 2007.
			
			\bibitem{MR2524513}
			M.~Cada and M.~Torrilhon.
			\newblock Compact third-order limiter functions for finite volume methods.
			\newblock {\em J. Comput. Phys.}, 228(11):4118--4145, 2009.
			
			\bibitem{chaudhuri2017explicit}
			A.~Chaudhuri, G.B. Jacobs, W.-S. Don, H~Abbassi, and F.~Mashayek.
			\newblock Explicit discontinuous spectral element method with entropy
			generation based artificial viscosity for shocked viscous flows.
			\newblock {\em J. Comput. Phys.}, 332:99--117, 2017.
			
			\bibitem{MR1930132}
			P.G. Ciarlet.
			\newblock {\em The finite element method for elliptic problems}, volume~40 of
			{\em Classics in Applied Mathematics}.
			\newblock Society for Industrial and Applied Mathematics (SIAM), Philadelphia,
			PA.
			
			\bibitem{cockburn1989tvb2}
			B.~Cockburn, S.-Y. Lin, and C.-W. Shu.
			\newblock {TVB Runge-Kutta local projection discontinuous Galerkin finite
				element method for conservation laws III: one-dimensional systems}.
			\newblock {\em J. Comput. Phys.}, 84(1):90--113, 1989.
			
			\bibitem{cockburn1989tvb1}
			B.~Cockburn and C.-W. Shu.
			\newblock {TVB Runge-Kutta local projection discontinuous Galerkin finite
				element method for conservation laws. II. General framework}.
			\newblock {\em Math. Comput.}, 52(186):411--435, 1989.
			
			\bibitem{cockburn1998runge}
			B.~Cockburn and C.-W. Shu.
			\newblock {The Runge-Kutta discontinuous Galerkin method for conservation laws
				V: multidimensional systems}.
			\newblock {\em J. Comput. Phys.}, 141(2):199--224, 1998.
			
			\bibitem{cook2004high}
			A.W. Cook and W.H. Cabot.
			\newblock A high-wavenumber viscosity for high-resolution numerical methods.
			\newblock {\em J. Comput. Phys.}, 195(2):594--601, 2004.
			
			\bibitem{cook2005hyperviscosity}
			A.W. Cook and W.H. Cabot.
			\newblock Hyperviscosity for shock-turbulence interactions.
			\newblock {\em J. Comput. Phys.}, 203(2):379--385, 2005.
			
			\bibitem{don2003multidomain}
			W.-S. Don, D.~Gottlieb, and J.-H. Jung.
			\newblock A multidomain spectral method for supersonic reactive flows.
			\newblock {\em J. Comput. Phys.}, 192(1):325--354, 2003.
			
			\bibitem{don1994numerical}
			W.S. Don.
			\newblock Numerical study of pseudospectral methods in shock wave applications.
			\newblock {\em J. Comput. Phys.}, 110(1):103--111, 1994.
			
			\bibitem{fernandez2018physics}
			P.~Fernandez, C.~Nguyen, and J.~Peraire.
			\newblock A physics-based shock capturing method for unsteady laminar and
			turbulent flows.
			\newblock In {\em 2018 AIAA Aerospace Sciences Meeting}, page 0062, 2018.
			
			\bibitem{fiorina2007artificial}
			B.~Fiorina and S.K. Lele.
			\newblock An artificial nonlinear diffusivity method for supersonic reacting
			flows with shocks.
			\newblock {\em J. Comput. Phys.}, 222(1):246--264, 2007.
			
			\bibitem{gottlieb2001spectral}
			D.~Gottlieb and J.S. Hesthaven.
			\newblock Spectral methods for hyperbolic problems.
			\newblock {\em J. Comput. Appl. Math.}, 128(1-2):83--131, 2001.
			
			\bibitem{Gottlieb2001Strong}
			S.~Gottlieb, C.-W. Shu, and E.~Tadmor.
			\newblock Strong stability-preserving high-order time discretization methods.
			\newblock {\em SIAM Rev.}, 43:89--112, 2001.
			
			\bibitem{guermond2008entropy}
			J.-L. Guermond and R.~Pasquetti.
			\newblock {Entropy-based nonlinear viscosity for Fourier approximations of
				conservation laws}.
			\newblock {\em C. R. Math. Acad. Sci. Paris}, 346(13-14):801--806, 2008.
			
			\bibitem{MR2787948}
			J.-L. Guermond, R.~Pasquetti, and B.~Popov.
			\newblock Entropy viscosity method for nonlinear conservation laws.
			\newblock {\em J. Comput. Phys.}, 230(11):4248--4267, 2011.
			
			\bibitem{harten1987uniformly}
			A.~Harten, B.~Engquist, S.~Osher, and S.~Chakravarthy.
			\newblock Uniformly high order accurate essentially non-oscillatory schemes,
			iii.
			\newblock {\em J. Comput. Phys.}, 131(1):3--47, 1987.
			
			\bibitem{harten1983upstream}
			A.~Harten, P.D. Lax, and B.~Van~Leer.
			\newblock On upstream differencing and godunov-type schemes for hyperbolic
			conservation laws.
			\newblock {\em SIAM Rev}, 25(1):35--61, 1983.
			
			\bibitem{hartmann2006adaptive}
			R.~Hartmann.
			\newblock Adaptive discontinuous {G}alerkin methods with shock-capturing for
			the compressible {N}avier-{S}tokes equations.
			\newblock {\em Internat. J. Numer. Methods Fluids}, 51(9-10):1131--1156, 2006.
			
			\bibitem{hartmann2013higher}
			R.~Hartmann.
			\newblock Higher-order and adaptive discontinuous {G}alerkin methods with
			shock-capturing applied to transonic turbulent delta wing flow.
			\newblock {\em Internat. J. Numer. Methods Fluids}, 72(8):883--894, 2013.
			
			\bibitem{hartmann2002adaptive}
			R.~Hartmann and P.~Houston.
			\newblock {Adaptive discontinuous Galerkin finite element methods for the
				compressible Euler equations}.
			\newblock {\em J. Comput. Phys.}, 183(2):508--532, 2002.
			
			\bibitem{hesthaven2017numerical}
			J.S. Hesthaven.
			\newblock {\em {Numerical methods for conservation laws: From analysis to
					algorithms}}.
			\newblock SIAM, 2017.
			
			\bibitem{hesthaven2008filtering}
			J.S. Hesthaven and R.~Kirby.
			\newblock {Filtering in Legendre spectral methods}.
			\newblock {\em Math. Comput.}, 77(263):1425--1452, 2008.
			
			\bibitem{hesthaven2007nodal}
			J.S. Hesthaven and T.~Warburton.
			\newblock {\em {Nodal discontinuous Galerkin methods: algorithms, analysis, and
					applications}}.
			\newblock Springer Science \& Business Media, 2007.
			
			\bibitem{hiltebrand2014entropy}
			A.~Hiltebrand and S.~Mishra.
			\newblock Entropy stable shock capturing space--time discontinuous galerkin
			schemes for systems of conservation laws.
			\newblock {\em Numerische Mathematik}, 126:103--151, 2014.
			
			\bibitem{hou2007solutions}
			S.~Hou and X.D. Liu.
			\newblock {Solutions of multi-dimensional hyperbolic systems of conservation
				laws by square entropy condition satisfying discontinuous Galerkin method}.
			\newblock {\em J. Sci. Comput.}, 31:127--151, 2007.
			
			\bibitem{MR0759810}
			T.~J.~R. Hughes and T.~E. Tezduyar.
			\newblock Finite element methods for first-order hyperbolic systems with
			particular emphasis on the compressible {E}uler equations.
			\newblock {\em Comput. Methods Appl. Mech. Engrg.}, 45(1-3):217--284, 1984.
			
			\bibitem{jameson1993artificial}
			A.~Jameson.
			\newblock {Artificial diffusion, upwind biasing, limiters and their effect on
				accuracy and multigrid convergence in transonic and hypersonic flows}.
			\newblock In {\em 11th Computational Fluid Dynamics Conference}, page 3359,
			1993.
			
			\bibitem{jameson2001perspective}
			A.~Jameson.
			\newblock A perspective on computational algorithms for aerodynamic analysis
			and design.
			\newblock {\em Progress in Aerospace Sciences}, 37(2):197--243, 2001.
			
			\bibitem{jameson1981numerical}
			A.~Jameson, W.~Schmidt, and E.~Turkel.
			\newblock {Numerical solution of the Euler equations by finite volume methods
				using Runge Kutta time stepping schemes}.
			\newblock In {\em 14th fluid and plasma dynamics conference}, page 1259, 1981.
			
			\bibitem{jiang1994cell}
			G.-S. Jiang and C.-W. Shu.
			\newblock {On a cell entropy inequality for discontinuous Galerkin methods}.
			\newblock {\em Math. Comput.}, 62(206):531--538, 1994.
			
			\bibitem{jiang1996efficient}
			G.-S. Jiang and C.-W. Shu.
			\newblock {Efficient implementation of weighted ENO schemes}.
			\newblock {\em J. Comput. Phys.}, 126(1):202--228, 1996.
			
			\bibitem{karamanos2000spectral}
			G-S. Karamanos and G.E. Karniadakis.
			\newblock A spectral vanishing viscosity method for large-eddy simulations.
			\newblock {\em J. Comput. Phys.}, 163(1):22--50, 2000.
			
			\bibitem{kawai2008localized}
			S.~Kawai and S.K. Lele.
			\newblock Localized artificial diffusivity scheme for discontinuity capturing
			on curvilinear meshes.
			\newblock {\em J. Comput. Phys.}, 227(22):9498--9526, 2008.
			
			\bibitem{klockner2009nodal}
			A.~Kl{\"o}ckner, T.~Warburton, J.~Bridge, and J.S. Hesthaven.
			\newblock {Nodal discontinuous Galerkin methods on graphics processors}.
			\newblock {\em J. Comput. Phys.}, 228(21):7863--7882, 2009.
			
			\bibitem{krivodonova2007limiters}
			L.~Krivodonova.
			\newblock {Limiters for high-order discontinuous Galerkin methods}.
			\newblock {\em J. Comput. Phys.}, 226(1):879--896, 2007.
			
			\bibitem{kuzmin2010vertex}
			D.~Kuzmin.
			\newblock {A vertex-based hierarchical slope limiter for p-adaptive
				discontinuous Galerkin methods}.
			\newblock {\em J. Comput. Appl. Math.}, 233(12):3077--3085, 2010.
			
			\bibitem{kuzmin2013slope}
			D.~Kuzmin.
			\newblock {Slope limiting for discontinuous Galerkin approximations with a
				possibly non-orthogonal Taylor basis}.
			\newblock {\em Internat. J. Numer. Methods Fluids}, 71(9):1178--1190, 2013.
			
			\bibitem{kuzmin2014hierarchical}
			D.~Kuzmin.
			\newblock {Hierarchical slope limiting in explicit and implicit discontinuous
				Galerkin methods}.
			\newblock {\em J. Comput. Phys.}, 257:1140--1162, 2014.
			
			\bibitem{leveque1992numerical}
			Randall~J. LeVeque.
			\newblock {\em Numerical methods for conservation laws}.
			\newblock Lectures in Mathematics ETH Z\"{u}rich. Birkh\"{a}user Verlag, Basel,
			second edition, 1992.
			
			\bibitem{liu1994weighted}
			X.-D. Liu, S.~Osher, and T.~Chan.
			\newblock Weighted essentially non-oscillatory schemes.
			\newblock {\em J. Comput. Phys.}, 115(1):200--212, 1994.
			
			\bibitem{liu2022essentially}
			Y.~Liu, J.~Lu, and C.-W. Shu.
			\newblock {An essentially oscillation-free discontinuous Galerkin method for
				hyperbolic systems}.
			\newblock {\em SIAM J. Sci. Comput.}, 44(1):A230--A259, 2022.
			
			\bibitem{liu2006spectral}
			Y.~Liu, M.~Vinokur, and Z.~Wang.
			\newblock {Spectral difference method for unstructured grids I: Basic
				formulation}.
			\newblock {\em J. Comput. Phys.}, 216(2):780--801, 2006.
			
			\bibitem{lodato2019characteristic}
			G.~Lodato.
			\newblock Characteristic modal shock detection for discontinuous finite element
			methods.
			\newblock {\em Comput. \& Fluids}, 179:309--333, 2019.
			
			\bibitem{lu2021oscillation}
			J.~Lu, Y.~Liu, and C.-W. Shu.
			\newblock {An oscillation-free discontinuous Galerkin method for scalar
				hyperbolic conservation laws}.
			\newblock {\em SIAM J. Numer. Anal.}, 59(3):1299--1324, 2021.
			
			\bibitem{LuoHWENO2007}
			H.~Luo, J.D. Baum, and R.~L\"{o}hner.
			\newblock A {H}ermite {WENO}-based limiter for discontinuous {G}alerkin method
			on unstructured grids.
			\newblock {\em J. Comput. Phys.}, 225(1):686--713, 2007.
			
			\bibitem{MR3534873}
			Y.~Lv, Y.~See, and M.~Ihme.
			\newblock An entropy-residual shock detector for solving conservation laws
			using high-order discontinuous {G}alerkin methods.
			\newblock {\em J. Comput. Phys.}, 322:448--472, 2016.
			
			\bibitem{maday1993legendre}
			Y.~Maday, S.M.O. Kaber, and E.~Tadmor.
			\newblock Legendre pseudospectral viscosity method for nonlinear conservation
			laws.
			\newblock {\em SIAM J. Numer. Anal.}, 30(2):321--342, 1993.
			
			\bibitem{MR4583858}
			J.~Markert, G.~Gassner, and S.~Walch.
			\newblock A sub-element adaptive shock capturing approach for discontinuous
			{G}alerkin methods.
			\newblock {\em Commun. Appl. Math. Comput.}, 5(2):679--721, 2023.
			
			\bibitem{meister2012application}
			A.~Meister, S.~Ortleb, and Th. Sonar.
			\newblock Application of spectral filtering to discontinuous {G}alerkin methods
			on triangulations.
			\newblock {\em Numer. Methods Partial Differential Equations},
			28(6):1840--1868, 2012.
			
			\bibitem{messai2024artificial}
			N.-A. Messa\"{\i}, G.~Daviller, and J.-F. Boussuge.
			\newblock Artificial viscosity-based shock capturing scheme for the {S}pectral
			{D}ifference method on simplicial elements.
			\newblock {\em J. Comput. Phys.}, 504:Paper No. 112864, 31, 2024.
			
			\bibitem{moro2016dilation}
			David Moro, Ngoc~Cuong Nguyen, and Jaime Peraire.
			\newblock Dilation-based shock capturing for high-order methods.
			\newblock {\em Internat. J. Numer. Methods Fluids}, 82(7):398--416, 2016.
			
			\bibitem{MR3008293}
			M.~Nazarov and J.~Hoffman.
			\newblock Residual-based artificial viscosity for simulation of turbulent
			compressible flow using adaptive finite element methods.
			\newblock {\em Internat. J. Numer. Methods Fluids}, 71(3):339--357, 2013.
			
			\bibitem{peng2023oedg}
			M.~Peng, Z.~Sun, and K.~Wu.
			\newblock {OEDG: Oscillation-eliminating discontinuous Galerkin method for
				hyperbolic conservation laws}.
			\newblock {\em arXiv preprint arXiv:2310.04807}, 2023.
			
			\bibitem{persson2013shock}
			P.O. Persson.
			\newblock {Shock capturing for high-order discontinuous Galerkin simulation of
				transient flow problems}.
			\newblock In {\em 21st AIAA computational fluid dynamics conference}, page
			3061, 2013.
			
			\bibitem{persson2006sub}
			P.O. Persson and J.~Peraire.
			\newblock {Sub-cell shock capturing for discontinuous Galerkin methods}.
			\newblock In {\em 44th AIAA aerospace sciences meeting and exhibit}, page 112,
			2006.
			
			\bibitem{premasuthan2014computation}
			S.~Premasuthan, C.~Liang, and A.~Jameson.
			\newblock {Computation of flows with shocks using the spectral difference
				method with artificial viscosity, I: basic formulation and application}.
			\newblock {\em Comput. \& Fluids}, 98:111--121, 2014.
			
			\bibitem{premasuthan2014computation2}
			S.~Premasuthan, C.~Liang, and A.~Jameson.
			\newblock {Computation of flows with shocks using the spectral difference
				method with artificial viscosity, II: modified formulation with local mesh
				refinement}.
			\newblock {\em Comput. \& Fluids}, 98:122--133, 2014.
			
			\bibitem{qiu2005rungelimiter}
			J.~Qiu and C.-W. Shu.
			\newblock {Runge-Kutta discontinuous Galerkin method using WENO limiters}.
			\newblock {\em SIAM J. Sci. Comput.}, 26(3):907--929, 2005.
			
			\bibitem{MR4135390}
			V.~Shankar, G.B. Wright, and A.~Narayan.
			\newblock A robust hyperviscosity formulation for stable {RBF}-{FD}
			discretizations of advection-diffusion-reaction equations on manifolds.
			\newblock {\em SIAM J. Sci. Comput.}, 42(4):A2371--A2401, 2020.
			
			\bibitem{shu2014discontinuous}
			C.-W. Shu.
			\newblock {Discontinuous Galerkin method for time-dependent problems: survey
				and recent developments}.
			\newblock {\em Recent developments in discontinuous Galerkin finite element
				methods for partial differential equations}, pages 25--62, 2014.
			
			\bibitem{MR3608339}
			M.~Sonntag and C.~Munz.
			\newblock Efficient parallelization of a shock capturing for discontinuous
			{G}alerkin methods using finite volume sub-cells.
			\newblock {\em J. Sci. Comput.}, 70(3):1262--1289, 2017.
			
			\bibitem{tadmor1989convergence}
			E.~Tadmor.
			\newblock Convergence of spectral methods for nonlinear conservation laws.
			\newblock {\em SIAM J. Numer. Anal.}, 26(1):30--44, 1989.
			
			\bibitem{MR1305618}
			E.~Tadmor.
			\newblock Super-viscosity and spectral approximations of nonlinear conservation
			laws.
			\newblock In {\em Numerical methods for fluid dynamics, 4 ({R}eading, 1992)},
			pages 69--81. Oxford Univ. Press, New York, 1993.
			
			\bibitem{MR4517287}
			I.~Tominec and M.~Nazarov.
			\newblock Residual viscosity stabilized {RBF}-{FD} methods for solving
			nonlinear conservation laws.
			\newblock {\em J. Sci. Comput.}, 94(1):Paper No. 14, 31, 2023.
			
			\bibitem{tonicello2020entropy}
			N.~Tonicello, G.~Lodato, and L.~Vervisch.
			\newblock Entropy preserving low dissipative shock capturing with
			wave-characteristic based sensor for high-order methods.
			\newblock {\em Comput. \& Fluids}, 197:104357, 17, 2020.
			
			\bibitem{toro2013riemann}
			E.F. Toro.
			\newblock {\em Riemann solvers and numerical methods for fluid dynamics: a
				practical introduction}.
			\newblock Springer Science \& Business Media, 2013.
			
			\bibitem{vandeven1991family}
			H.~Vandeven.
			\newblock Family of spectral filters for discontinuous problems.
			\newblock {\em J. Sci. Comput.}, 6(2):159--192, 1991.
			
			\bibitem{MR0037613}
			J.~Von~Neumann and R.~D. Richtmyer.
			\newblock A method for the numerical calculation of hydrodynamic shocks.
			\newblock {\em J. Appl. Phys.}, 21:232--237, 1950.
			
			\bibitem{wan2021new}
			Y.~Wan and Y.~Xia.
			\newblock A new hybrid {WENO} scheme with the high-frequency region for
			hyperbolic conservation laws.
			\newblock {\em Commun. Appl. Math. Comput.}, 5(1):199--234, 2023.
			
			\bibitem{MR4673601}
			L.~Wei and Y.~Xia.
			\newblock An indicator-based hybrid limiter in discontinuous {G}alerkin methods
			for hyperbolic conservation laws.
			\newblock {\em J. Comput. Phys.}, 498:Paper No. 112676, 25, 2024.
			
			\bibitem{MR4125673}
			Y.~Xu, X.~Meng, C.-W. Shu, and Q.~Zhang.
			\newblock Superconvergence analysis of the {R}unge-{K}utta discontinuous
			{G}alerkin methods for a linear hyperbolic equation.
			\newblock {\em J. Sci. Comput.}, 84(1):Paper No. 23, 40, 2020.
			
			\bibitem{MR4161755}
			Y.~Xu, C.-W. Shu, and Q.~Zhang.
			\newblock Error estimate of the fourth-order {R}unge-{K}utta discontinuous
			{G}alerkin methods for linear hyperbolic equations.
			\newblock {\em SIAM J. Numer. Anal.}, 58(5):2885--2914, 2020.
			
			\bibitem{MR3977110}
			Y.~Xu, Q.~Zhang, C.-W. Shu, and H.~Wang.
			\newblock The {$\rm L^2$}-norm stability analysis of {R}unge-{K}utta
			discontinuous {G}alerkin methods for linear hyperbolic equations.
			\newblock {\em SIAM J. Numer. Anal.}, 57(4):1574--1601, 2019.
			
			\bibitem{yu2020study}
			J.~Yu and J.S. Hesthaven.
			\newblock {A study of several artificial viscosity models within the
				discontinuous Galerkin framework}.
			\newblock {\em Commun. Comput. Phys.}, 27(5):1309--1343, 2020.
			
			\bibitem{ZHANG20108918}
			X.~Zhang and C.-W. Shu.
			\newblock {On positivity-preserving high order discontinuous Galerkin schemes
				for compressible Euler equations on rectangular meshes}.
			\newblock {\em J. Comput. Phys.}, 229(23):8918--8934, 2010.
			
			\bibitem{zhong2013simple}
			X.~Zhong and C.-W. Shu.
			\newblock {A simple weighted essentially nonoscillatory limiter for Runge-Kutta
				discontinuous Galerkin methods}.
			\newblock {\em J. Comput. Phys.}, 232(1):397--415, 2013.
			
			\bibitem{zhu2020high}
			J.~Zhu, J.~Qiu, and C.-W. Shu.
			\newblock High-order {R}unge-{K}utta discontinuous {G}alerkin methods with a
			new type of multi-resolution {WENO} limiters.
			\newblock {\em J. Comput. Phys.}, 404:109105, 18, 2020.
			
			\bibitem{zhu2008runge}
			J.~Zhu, J.~Qiu, C.-W. Shu, and M.~Dumbser.
			\newblock {Runge--Kutta discontinuous Galerkin method using WENO limiters II:
				unstructured meshes}.
			\newblock {\em J. Comput. Phys.}, 227(9):4330--4353, 2008.
			
			\bibitem{zingan2013implementation}
			V.~Zingan, J.-L. Guermond, J.~Morel, and B.~Popov.
			\newblock Implementation of the entropy viscosity method with the discontinuous
			{G}alerkin method.
			\newblock {\em Comput. Methods Appl. Mech. Engrg.}, 253:479--490, 2013.
			
		\end{thebibliography}

	\end{document}